\newtheorem{theorem}{Theorem}[section]
\newtheorem{lemma}[theorem]{Lemma}
\newtheorem{proposition}[theorem]{Proposition}
\newtheorem{corollary}[theorem]{Corollary}
\newtheorem{definition}[theorem]{Definition}
\newtheorem{example}[theorem]{Example}
\newtheorem{remark}[theorem]{Remark}
\newcommand{\N}{\mathbb{N}}
\newcommand{\Z}{\mathbb{Z}}
\newcommand{\Q}{\mathbb{Q}}
\newcommand{\R}{\mathbb{R}}
\newcommand{\ns}[1]{\,\!^\ast#1} 
\newcommand{\sh}[1]{\,\!^\circ#1} 
\renewcommand{\ni}[2]{[#1,#2]_{\Lambda}}
\newcommand{\leb}{\mu_L}
\newcommand{\B}{\test'({\dom})}
\newcommand{\F}{F_{\Lambda}}
\newcommand{\weakly}{\rightharpoonup}
\newcommand{\hR}{\ns{\R}} 
\newcommand{\fin}{\hR_{fin}}
\renewcommand{\Lambda}{\mathbb{X}}
\newcommand{\D}{\mathbb{D}}
\newcommand{\test}{\mathscr{D}_\Lambda}
\newcommand{\tests}{\mathscr{D}}
\newcommand{\supp}{\mathrm{supp\,}}
\renewcommand{\L}{{L_{\Lambda}}}
\newcommand{\J}{J_{\Lambda}}
\newcommand{\f}{T_{\Lambda}}
\newcommand{\lp}[1]{\widehat{#1}}
\newcommand{\lpi}{P}
\newcommand{\grid}[1]{\mathbb{G}({#1})}
\renewcommand{\sim}{\approx}
\newcommand{\dom}{\Omega_{\Lambda}}
\newcommand{\norm}[1]{\Vert#1\Vert}
\newcommand{\bcf}{C^0_b}
\newcommand{\rad}{\mathbb{M}}
\newcommand{\prob}{\rad^{\mathbb{P}}}
\newcommand{\stdiv}{\mathrm{div}}
\renewcommand{\div}{\stdiv_{\Lambda}}
\newcommand{\grad}{\nabla_\Lambda}
\newcommand{\lap}{\Delta_\Lambda}
\newcommand{\ldual}{\langle}
\newcommand{\rdual}{\rangle_{\tests(\Omega)}}
\renewcommand{\Lambda}{\mathbb{X}}
\renewcommand{\F}{F_{\Lambda}}
\renewcommand{\B}{\test'({\dom})}
\newcommand{\M}{L^{\dagger}}
\begin{document}
	
		
		\title[Grid functions, distributions, and PDEs]{Grid functions of nonstandard analysis in the theory of distributions and in partial differential equations}
		
		\author{Emanuele Bottazzi}
		\address{University of Trento, Italy}
		
		
		
		\maketitle
		
		\begin{abstract}
			We introduce the space of grid functions, a space of generalized functions of nonstandard analysis that provides a coherent generalization both of the space of distributions and of the space of Young measures. We will show that in the space of grid functions it is possible to formulate problems from many areas of functional analysis in a way that coherently generalizes the standard approaches. As an example, we discuss some applications of grid functions to the calculus of variations and to the nonlinear theory of distributions. Applications to nonlinear partial differential equations will be discussed in a subsequent paper. 
		\end{abstract}
		
		
\tableofcontents

\section{Introduction}

The theory of distributions, pioneered by Dirac in \cite{dirac} and developed in the first half of the XX Century, has become one of the fundamental tools of functional analysis.
In particular, the possibility to define the weak derivative of a non-differentiable function has allowed the formulation and the study of a wide variety of nonsmooth phenomena by the theory of partial differential equations.
However, the lack of a nonlinear theory of distributions is a limiting factor both for the applications and for the theoretical study of nonlinear PDEs.
On the one hand, in the description of some physical phenomena such as shock waves and relativistic fields, it arises the need to have some mathematical objects which cannot be formalized in the sense of distributions (we refer to \cite{colombeau advances} for some examples).
On the other hand, the absence of a nonlinear theory of distributions poses some limitations in the study of nonlinear partial differential equations: while some nonlinear problems can be solved by studying the limit of suitable regularized problems, other problems do not allow for solutions in the sense of distributions (see for instance the discussion in \cite{evans nonlinear}).

In 1954, L.\ Schwartz proved that the absence of a nonlinear theory for distributions is intrinsic: more formally, the main theorem of \cite{schwartz} entails that there is no differential algebra $(A, +, \otimes, D)$ in which the real distributions $\mathcal{D}'$ can be embedded and the following conditions are satisfied:
\begin{enumerate}
	\item $\otimes$ extends the product over $C^0$ functions;
	\item $D$ extends the distributional derivative $\partial$;
	\item the product rule holds: $D(u\otimes v) = (Du)\otimes v + u\otimes(Dv)$.
\end{enumerate}

Despite this negative result, there have been many attempts at defining some notions of product between distributions (see for instance \cite{neutrix, survey}).
Following this line of research, Colombeau in 1983 proposed an organic approach to a theory of generalized functions \cite{colombeau 1983}: Colombeau's idea is to embed the distributions in a differential algebra with a good nonlinear theory, but at the cost of sacrificing the coherence between the product of the differential algebra with the product over $C^0$ functions.
This approach has been met with interest and has proved to be a prolific field of research.
For a survey of the approach by Colombeau and for recent advances, we refer to \cite{colombeau advances}.

Research about generalized functions beyond distributions is also being carried out within the setting of nonstandard analysis.
Possibly the earliest result in this sense is the proof by Robinson that the distributions can be represented by smooth functions of nonstandard analysis and by polynomials of a hyperfinite degree \cite{nsa robinson}.
Distributions have also been represented by functions defined on hyperfinite domains, for instance by Kinoshita in \cite{moto} and, with a different approach, by Sousa Pinto and Hoskins in \cite{hyperfinite pinto}.
Another nonstandard approach to the theory of generalized functions has been proposed by Oberbuggenberg and Todorov in \cite{oberbuggenberg} and further studied by Todorov et al.\ \cite{todorov2, todorov3}.
In this approach, the distributions are embedded in an algebra of asymptotic functions defined over a Robinson field of asymptotic numbers.
Moreover, this algebra of asymptotic functions can be seen as a generalized Colombeau algebra where the set of scalars is an algebraically closed field rather than a ring with zero divisors.
In this setting, it is possible to study generalized solutions to differential equations, and in particular to those with nonsmooth coefficient and distributional initial data \cite{nonsmo2, nonsmo1}.

Another theory of generalized functions oriented towards the applications in the field of partial differential equations and of the calculus of variations has been developed by Benci and Luperi Baglini.
In \cite{ultrafunctions1} and subsequent papers \cite{benci, ultramodel, ultraschwartz, ultraapps}, the authors developed a theory of ultrafunctions, i.e.\ nonstandard vector spaces of a hyperfinite dimension that extend the space of distributions.
In particular, the space of distributions can be embedded in an algebra of ultrafunctions $V$ such that the following inclusions hold: $\tests'(\R) \subset V \subset \ns{C^1(\R)}$ \cite{ultraschwartz}.
This can be seen as a variation on a result by Robinson and Bernstein, that in \cite{invariant} showed that any Hilbert space $H$ can be embedded in a hyperfinite dimensional subspace of $\ns{H}$.
In the setting of ultrafunctions, some partial differential equations can be formulated coherently by a Galerkin approximation, while the problem of finding the minimum of a functional can be turned to a minimization problem over a formally finite vector space.
For a discussion of the applications of ultrafunctions to functional analysis, we refer to \cite{ultrafunctions1, ultramodel, ultraapps}.

The idea of studying the solutions to a partial differential equation via a hyperfinite 
Galerkin approximation is not new.
For instance, Capi\'{n}sky and Cutland in \cite{capicutland statistic} studied statistical solutions to parabolic differential equations by discretizing the equation in space by a Galerkin approximation in an hyperfinite dimension.
The nonstandard model becomes then a hyperfinite system of ODEs that, by transfer, has a unique nonstandard solution.
From this solution, the authors showed that it is possible to define a standard weak 
solution to the original problem.
In the subsequent \cite{capicutland1}, the authors proved the existence of weak and statistical solutions to the Navier-Stokes equations in 3-dimensions by modelling the equations with a similar hyperfinite Galerkin discretization in space.
This approach has spanned a whole line of research on the Navier-Stokes equations, concerning both the proof of the existence of solutions (see for instance \cite{capicutland noise, cutland n+1}) and the definition and the existence of attractors (see for instance \cite{capicutland attractors, cutland attractors}).
One of the advantages of this approach is that, by a hyperfinite discretization in space, the nonstandard models have a unique global solution, even when the original problem does not.
For a discussion of the relation between the uniqueness of the solutions of the nonstandard formulation and the non-uniqueness of the weak solutions of the original problem in the case of the Navier-Stokes equations, we refer to \cite{capicutland1}.

In the theoretical study of nonlinear partial differential equations, sometimes problems do not allow even for a weak solution.
However, the development of the notion of Young measures, originally introduced by L.\ C.\ Young in the field of optimal control
in \cite{young1}, 
has allowed for a synthetic characterization of the behaviour of the weak-$\star$ limit of the composition between a nonlinear continuous function and a uniformly bounded sequence in $L^\infty$.
By enlarging the class of admissible solutions to include Young measures, one can define generalized solutions for some class of nonlinear problems as the weak-$\star$ limit of the solutions to a sequence of regularized problems \cite{demoulini, evans nonlinear, matete, matete2, plotnikov, slemrod, smarrazzo}.
A similar approach can be carried out in the field of optimal controls, where generalized controls in the sense of Young measures can be defined as the measure-valued limit points of a minimizing sequence of controls.
For an in-depth discussion of the role of Young measures as generalized solution to PDEs and as generalized controls, we refer to \cite{balder, evans nonlinear, sychev, webbym}.

In \cite{cutland controls3, cutland controls2, cutland controls}, Cutland showed that Young measures can be interpreted also as the standard part of internal controls of nonstandard analysis.
The possibility to obtain a Young measure from a nonstandard control allows to study generalized solutions to nonlinear variational problems by means of nonstandard techniques: such an approach has been carried out for instance by Cutland in the aforementioned papers,  and by Tuckey in \cite{tuckey}.
For a discussion of this field of research, we refer to \cite{neves}.

\subsection*{Structure of the paper}

In this paper, we will discuss another theory of generalized functions of nonstandard analysis, hereafter called grid functions (see Definition \ref{def grid functions}), that provide a coherent generalization both of the space of distributions and of a space of parametrized measures that extends the space of Young measures.
In Section \ref{prelim}, we will define the space of grid functions, and recall some well-established nonstandard results that will be used throughout the paper.
In particular, we will formulate in the setting of grid functions some known results regarding the relations between the hyperfinite sum and the Riemann integral, and the finite difference operators of an infinitesimal step and the derivative of a $C^1$ function.

In Section \ref{distri}, we will study the relations between the grid functions and the distributions, with the aim of proving that every distribution can be obtained from a suitable grid function.
In order to reach this result, we will introduce an algebra of nonstandard test functions that can be seen as the grid function counterpart to the space $\tests(\Omega)$ of smooth functions with compact support over $\Omega \subseteq \R^k$.
By duality with respect to the algebra of test functions, we will define a module of grid distributions, and an an equivalence relation between grid functions (see Definition \ref{def equiv} and Definition \ref{def bounded grid functions}).
We will then prove that the set of equivalence classes of grid distributions with respect to this equivalence relation is a real vector space that is isomorphic to the space of distributions.
Afterwards, we will discuss how the finite difference operators generalize not only the usual derivative for $C^1$ functions, but also the distributional derivative.

After having shown that the finite difference operator generalizes the distributional derivative, our study of the relations between grid functions and distributions concludes with a discussion of the Schwartz impossibility theorem.
In particular, 
we will show that the space of distributions can be embedded in the space of grid functions in a way that
\begin{enumerate}
	\item the product over the grid functions generalizes the pointwise product between continuous functions;
	\item the finite difference is coherent with the distributional derivative modulo the equivalence relation induced by duality with test functions;
	\item a discrete chain rule for products holds.
\end{enumerate}
This theorem supports our claim that the space of grid functions provides a nontrivial generalization of the space of distributions.

In Section \ref{sez young}, we will embed the space of grid functions in the spaces $\ns{L^p}$ with $1 \leq p \leq \infty$, and we will study some properties of grid functions through this embedding.
Moreover, we will discuss a generalization of the embedding of $L^2(\Omega)$ in a hyperfinite subspace of $\ns{L^2}(\Omega)$ due to Robinson and Bernstein \cite{invariant}.
This classic result will be generalized in two directions:
\begin{enumerate}
	\item for every $1 \leq p \leq \infty$, we will embed the spaces $L^p(\Omega)$ in the space of grid functions, which is a subspace of $\ns{L^p(\Omega)}$ of a hyperfinite dimension;
	\item the above embedding is actually an embedding of the bigger space $\tests'(\Omega)$ into a hyperfinite subspace of $\ns{L^p(\Omega)}$ for all $1 \leq p \leq \infty$.
\end{enumerate}
Moreover, this embedding is obtained with different techniques from the original result by Robinson and Bernstein.

In the second part of Section \ref{sez young}, we will establish a correspondence between grid functions and parametrized measures, in a way that is coherent with the isomorphism between equivalence classes of grid distributions and distributions discussed in Section \ref{distri}.
The results discussed in Section \ref{sez young} will be used in Section \ref{solutions}, where we will discuss the grid function formulation of partial differential equations, in Section \ref{selected applications}, where we will show selected applications of grid functions from different fields of functional analysis, and in the paper \cite{illposed}, where  we will study in detail a grid function formulation of a class of ill-posed partial differential equations with variable parabolicity direction.

In Section \ref{solutions}, we will discuss how to formulate partial differential equations in the space of grid functions in a way that coherently generalizes the standard notions of solutions.
In particular, stationary PDEs will be given a fully discrete formulation, while time-dependent PDEs will be given a continuous-in-time and discrete-in-space formulation, resulting in a hyperfinite system of ordinary differential equations, as in the nonstandard formulation of the Navier-Stokes equations by Capi\'{n}sky and Cutland.

In Section \ref{selected applications}, we will use the theory of grid functions developed so far to study two problems in the nonlinear theory of distributions and in the calculus of variations.
These problems are classically studied within different frameworks, but we will show that each of these problems can be formulated in the space of grid functions in a way that the nonstandard solutions generalize the respective standard solutions.

\section{Terminology and preliminary notions}\label{prelim}

In this section, we will now fix some notation and recall some results from nonstandard analysis that will be useful throughout the paper.

If $A \subseteq \R^k$, then $\overline{A}$ is the closure of $A$ with respect to any norm in $\R^k$, $\partial A$ is the boundary of $A$, and $\chi_A$ is the characteristic function of $A$.
If $x \in \R$, then $\chi_x = \chi_{\{x\}}$.
If $f : A \rightarrow \R$
, $\supp f$ is the closure of the set $\{x \in A : f(x)\not=0\}$.
These definitions are generalized as expected also to nonstandard objects.

We consider the following norms over $\R^k$: if $x \in \R^k$ or $x \in \hR^k$, then $|x| = \sqrt{\sum_{i=1}^k x_i^2}$ is the euclidean norm, and $|x|_\infty = \max_{i = 1, \ldots, k} |x_i|$ is the maximum norm.

We will denote by $e_1, \ldots, e_k$ the canonical basis of $\R^k$. 
If $f : A \subseteq \R^m \rightarrow \R^k$, we will denote by $f_1, \ldots, f_k$ the hyperreal valued functions that satisfy the equality $f(x) = (f_1(x), \ldots, f_k(x))$ for all $x \in \hR$.

In the sequel, $\Omega \subseteq \R^k$ will be an open set.

If $f \in C^1(\Omega)$, we will denote the partial derivative of $f$ in the direction $e_i$ by $\frac{df}{dx_i}$ or $D_i f$.
If $\Omega \subseteq \R$, we will also write $f'$ for the derivative of $f$.
We adopt the multi-index notation for partial derivatives and, if $\alpha$ is a multi-index, we will denote by $D^\alpha f$ the function
$$
D^\alpha f = \frac{\partial^{|\alpha|} f}{\partial x_1^{\alpha_1} \partial x_2^{\alpha_2}\ldots \partial x_k^{\alpha_k}}.
$$
If $\alpha = (\alpha_1, \ldots, \alpha_k)$ is a multi-index, then $\alpha-e_i = (\alpha_1, \ldots, \alpha_i -1, \ldots, \alpha_k)$.
If $f : [0,T]\times \Omega \rightarrow \R$, we will think of the first variable of $f$ as the time variable, denoted by $t$, and we will write $f_t$ for the derivative $\frac{\partial f}{\partial t}$.

We will often reference the following real vector spaces:
\begin{itemize}
	\item $\bcf(\R) = \{ f \in C^0(\R) : f \text{ is bounded and } \lim_{|x|\rightarrow \infty} f(x) = 0 \}$.
	\item $C^0_c(\Omega) = \{ f \in C^0_b(\Omega) : \supp f \subset \subset \Omega\}$.
	\item $\tests(\Omega) = \{ f \in C^\infty(\Omega) : \supp f \subset \subset \Omega\}$.
	\item A real distribution over $\Omega$ is an element of $\tests'(\Omega)$, i.e.\ a continuous linear functional $T : \tests(\Omega)\rightarrow\R$.
	If $T$ is a distribution and $\varphi$ is a test function, we will denote the action of $T$ over $\varphi$ by
	$ \ldual T, \varphi\rdual $.
	When $T$ can be identified with a $L^p$ function, we will sometimes write
	$
	\int_{\Omega} T\varphi dx
	$
	instead of $\ldual T, \varphi\rdual$.
	
	If $T \in \tests'(\R)$, we will denote the derivative of $T$ by $T'$ or $D T$.
	Recall that $T'$ is defined by the formula
	$
	\ldual DT, \varphi\rdual = - \ldual T, D\varphi\rdual.
	$
	If $T \in \tests'(\Omega)$ and $\alpha$ is a multi-index, the distribution $D^\alpha T$ is defined in a similar way:
	$
	\ldual D^\alpha T, \varphi\rdual = (-1)^{|\alpha|} \ldual T, D^\alpha \varphi\rdual.
	$
	\item In the sequel, measurable will mean measurable with respect to $\leb$, the Lebesgue measure over $\R^n$.
	Consider the equivalence relation given by equality almost everywhere: two measurable functions $f$ and $g$ are equivalent if $\leb(\{x\in\Omega : f(x) \not = g(x)\})=0$.
	We will not distinguish between the function $f$ and its equivalence class, and we will say that $f = g$ whenever the functions $f$ and $g$ are equal almost everywhere.
	
	For all $1 \leq p < \infty$, $L^p(\Omega)$ is the set of equivalence classes of measurable functions $f: \Omega \rightarrow \R$ that satisfy
	$$
	\int_{\Omega} |f|^p dx < \infty.
	$$
	If $f \in L^p(\Omega)$, the $L^p$ norm of $f$ is defined by $$\norm{f}_p^p = \int_{\Omega} |f|^p dx.$$
	
	$L^\infty(\Omega)$ is the set of equivalence classes of measurable functions that are essentially bounded: we will say that $f: \Omega \rightarrow \R$ belongs to $L^\infty(\Omega)$ if there exists $y \in \R$ such that $\leb(\{x\in\Omega:|f(x)| > y\}) = 0$.
	In this case,
	$$\norm{f}_\infty = \inf\{y \in \R : \leb(\{x\in\Omega:f(x) > y\}) = 0 \}.$$
	If $1 < p < \infty$, we recall that $p'$ is defined as the unique solution to the equation
	$$
	\frac{1}{p}+\frac{1}{p'}=1,
	$$
	while $1' = \infty$ and $\infty' = 1$.
	
	\item It is well-established that the distributional derivative allows to define a notion of weak derivative for $L^p$ functions.
	$L^2$ functions whose weak derivatives up to order $p < \infty$ are still $L^2$ functions are of a particular relevance in the study of partial differential equations.
	For $p \in \N$, $p \geq 1$, the space $H^p(\Omega)$ is defined as
	$$H^p(\Omega) = \{ f \in L^2(\Omega) : D^\alpha f \in L^2(\Omega) \text{ for every } \alpha \text{ with }|\alpha|\leq p \}.$$
	We also consider the following norm over the space $H^p(\Omega)$:
	$$
	\norm{f}_{H^p} = \sum_{|\alpha|\leq p} \norm{D^\alpha f}_2,
	$$
	and we will call it the $H^p$ norm.
	Recall also that $H_0^p(\Omega) \subset H^p(\Omega)$ is defined as the closure of $\tests(\Omega)$ in $H^p(\Omega)$ with respect to the $H^p$ norm.
	For further properties of the weak derivative and of the spaces $H^p(\Omega)$ and $H_0^p(\Omega)$, we refer to \cite{strichartz, tartar}.
	
	\item $\rad(\R) = \{ \nu : \nu \text{ is a Radon measure over } \R \text{ satisfying } |\nu|(\R)<+\infty \}$.
	\item $\prob(\R) = \{ \nu \in \rad(\R) : \nu \text{ is a probability measure}\}$.
	
	\item Following \cite{balder, ball, webbym} and others, measurable functions $\nu : \Omega \rightarrow \prob(\R)$ will be called Young measures.
	Measurable functions $\nu : \Omega \rightarrow \rad(\R)$ will be called parametrized measures, even though in the literature the term parametrized measure is used as a synonym for Young measure.
	If $\nu$ is a parametrized measure and if $x \in \Omega$, we will write $\nu_x$ instead of $\nu(x)$.
\end{itemize}

Throughout the paper, we will work with a $|\test'(\Omega)|$-saturated nonstandard model $\hR$, and we will assume familiarity with the basics of nonstandard analysis.
For an introduction on the subject, we refer for instance to Goldblatt \cite{go}, but see also \cite{nsa theory apps, da, keisler, nsa working math, nsa robinson}.
The definitions introduced so far are extended by transfer, as usual, so for instance $\hR^k$ is the set of $k$-uples of hyperreal numbers, and $\{e_1, \ldots, e_k\}$ is a basis of $\hR^k$.

We will denote by $\fin$ the set of finite numbers in $\hR$, i.e.\ $\fin = \{ x \in \hR : x$ is finite$\}$.
For any $x, y \in \ns{\R}$ we will write $x \sim y$ to denote that $x-y$ is infinitesimal, 
we will say that $x$ is finite if there exists a standard $M \in \R$ satisfying $|x| < M$, and we will say that $x$ is infinite whenever $x$ is not finite.
The notion of finiteness can be extended componentwise to elements of $\ns{\R^k}$ whenever $k\in\N$: we will say that $x \in \ns{\R^k}$ is finite iff all of its components are finite, and we define $\sh{x} = (\sh{x_1}, \sh{x_2}, \ldots, \sh{x_k}) \in \R^k$.
Similarly, if $x, y \in \ns{\R^k}$, we will write $x \sim y$ if $|x-y|\sim0$ (notice that this is equivalent to $|x-y|_\infty \sim 0$).

If $k$ is finite, for all finite $x \in \hR^k$ we will denote by $\sh{x}\in\R^k$ the standard part of $x$, i.e.\ the unique vector in $\R^k$ closest to $x$.
Similarly, for any  $A \subseteq \hR^k$, $\sh{X}$ will denote the set of the standard parts of the finite elements of $X$.

The space of grid functions is defined as the space of functions whose domain is a uniform hyperfinite grid.

\begin{definition}\label{def grid functions}
	Let $N_0\in\ns{\N}$ be an infinite hypernatural number.
	Set $N = N_0!$ and $\varepsilon = 1/N$, and define 
	$$\Lambda = \{ n\varepsilon : n \in  [-N^2, N^2] \cap \ns{\Z}\}.$$
	We will say that an internal function $f : \Lambda^k \rightarrow \hR$ is a grid function and, if $A \subseteq \Lambda^k$ is internal, we denote by $\grid{A}$ the space of grid functions defined over $A$:
	$\grid{A} = \mathbf{Intl}(\hR^{A}) = \{ f : A \rightarrow \hR \text{ and } f \text{ is internal}\}.$
\end{definition}


\subsection{Some elements of nonstandard topology}\label{ns topology}

In the next definition, we will give a canonical extension of subsets of the standard euclidean space $\R^k$ to internal subsets of the grid $\Lambda^k$.

\begin{definition}
	For any $A \subseteq \R^k$, we define $A_\Lambda = \ns{A} \cap \Lambda^k$.
	Notice that $A_\Lambda$ is an internal subset of $\Lambda^k$, and in particular it is hyperfinite.
\end{definition}

In general, 
we expect that for a generic set $A \subseteq \hR^k$, $\sh{A_\Lambda} \not = \overline{A}$.
For instance, if $A \cap \Q^k = \emptyset$, then $A_\Lambda = \sh{A_\Lambda} = \emptyset$.
In this section, we will prove that if $A$ is an open set, then indeed $A_\Lambda$ is a faithful extension of $A$, in the sense that $\sh{A_\Lambda} = \sh{\overline{A}_\Lambda} = \overline{A}$.
Moreover, there is a nice characterization of the boundary of $A_\Lambda$ which is projected to the boundary of $A$ via the standard part map.

In order to prove these results, we need to show that for an open set $A$, $\mu(x) \cap \ns{A} \not = \emptyset$ is equivalent to $\mu(x) \cap A_\Lambda \not = \emptyset$ for all $x\in \overline{A}$.

\begin{lemma}\label{aperti}
	If $A \subseteq \R^k$ is an open set, then for all $x \in \overline{A}$ it holds
	\begin{equation}\label{closed3}
	\mu(x) \cap \ns{A} \not= \emptyset \Longleftrightarrow \mu(x) \cap A_\Lambda \not= \emptyset.
	\end{equation}
\end{lemma}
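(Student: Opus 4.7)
The $(\Leftarrow)$ direction is immediate since $A_\Lambda \subseteq \ns{A}$, so the content lies in $(\Rightarrow)$. Given $x \in \overline{A}$ and some $y \in \mu(x) \cap \ns{A}$, the naive plan would be to round $y$ componentwise to the nearest multiple of $\varepsilon = 1/N$. This can fail: the transferred openness of $\ns{A}$ at $y$ provides only an internal radius $r > 0$ with $\ns{B(y,r)} \subseteq \ns{A}$, and $r$ may be infinitesimal, potentially smaller than the grid spacing, in which case rounding can push the point out of $\ns{A}$. Working around this is the main obstacle.

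I would resolve it by avoiding the arbitrary $y$ altogether and instead using openness of $A$ at the standard level, where the relevant radii are guaranteed to be non-infinitesimal. For each $n \in \N$, since $x \in \overline{A}$ I pick a standard $y_n \in A$ with $|y_n - x| < 1/(2n)$, and since $A$ is open I fix a standard $s_n > 0$ with $B(y_n, s_n) \subseteq A$. Because $\varepsilon$ is infinitesimal, $\sqrt{k}\,\varepsilon < s_n$, so rounding $y_n$ componentwise to the nearest point of $\Lambda^k$ yields a grid point $z_n$ with $|z_n - y_n| \leq \sqrt{k}\,\varepsilon < s_n$. Transfer of $B(y_n, s_n) \subseteq A$ gives $z_n \in \ns{A}$, and combining with the choice of $y_n$ gives $|z_n - x| < 1/n$ (which holds for every standard $n$ since $\varepsilon \sim 0$). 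Note that the magnitude of $y_n$ is standard, so $|y_n|/\varepsilon < N^2$ and the rounded point genuinely lies in $\Lambda^k$.

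To conclude I would invoke overspill. The predicate ``there exists $z \in \Lambda^k \cap \ns{A}$ with $|z - x| < 1/\nu$'' is an internal condition on $\nu \in \ns{\N}$, and by the previous step it is satisfied for every standard $n$. Hence it is satisfied for some infinite $\nu_0 \in \ns{\N}$. Any such witness $z$ satisfies $z \in \Lambda^k \cap \ns{A} = A_\Lambda$ and $|z - x| < 1/\nu_0 \sim 0$, so $z \in \mu(x) \cap A_\Lambda$ as required. The rest of the argument is bookkeeping; the only genuinely nonstandard subtlety is the switch from the given $y$ to a standard approximation $y_n$, which restores the dominance of the openness radius over the grid spacing.
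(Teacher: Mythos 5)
Your proof is correct, and its overall skeleton is the same as the paper's: produce, for every standard $n \in \N$, a point of $A_\Lambda$ within $1/n$ of $x$, and then conclude by overspill (the final overspill step is essentially verbatim the paper's). Where you differ is in how that grid point is produced. The paper exploits the hypothesis $N = N_0!$: every standard rational lies exactly on the grid, so one simply picks a rational point of $A$ close to $x$ and it is automatically in $A_\Lambda$, with no rounding and no explicit appeal to an openness radius. You instead take an arbitrary standard point $y_n \in A$ near $x$, round it componentwise to $\Lambda^k$, and use a \emph{standard} radius $s_n$ with $B(y_n,s_n)\subseteq A$ to guarantee the rounded point stays in $\ns{A}$ after transfer. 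Your route is slightly longer but buys robustness: it never uses the factorial structure of $N$ and would work for any uniform grid of infinitesimal mesh covering the finite range, whereas the paper's one-line argument is tied to the fact that $\Q^k \subseteq \Lambda^k$. Your opening observation about the danger of using the given nonstandard $y \in \mu(x)\cap\ns{A}$ (where the transferred openness radius may be infinitesimal) correctly identifies why both you and the paper retreat to standard-level approximations of $x$.
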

\begin{proof}
	Let $x \in \overline{A}$.
	The hypothesis $N=N_0!$ for an infinite $N_0 \in \ns{\N}$ ensures that for all $p \in \Q^k$, $p \in\Lambda^k$.
	As a consequence, for all $n \in \N$ there exists $p \in A_\Lambda$ with $|x-p|<1/n$.
	By overspill, for some infinite $M \in \ns{\N}$ there exists $p \in  A_\Lambda$ that satisfies $|x-p|<1/M$.
\end{proof}

We want to define a boundary for the set $A_\Lambda$ that is coherent with the usual notion of boundary for $A$.
The idea is to define the $\Lambda$-boundary of $A_\Lambda$ as the set of points of $A_\Lambda$ that are within a step of length $\varepsilon$ from a point of $\ns{A}^c$.

\begin{definition}
	Let $A \subseteq \R^k$.
	We define the $\Lambda$-boundary of $A_\Lambda$ as
	$$
	\partial_\Lambda A_\Lambda = \{ x \in A_\Lambda : \exists y \in \ns{A}^c \text{ satisfying } |x-y|_\infty\leq \varepsilon \}.
	$$
\end{definition}

This definition is coherent with the usual boundary of an open set.

\begin{proposition}\label{topologia bella}
	Let $A \subseteq \R^k$ be an open set.
	Then $\sh{A_\Lambda} = \overline{A}$ and $\sh{ (\partial_\Lambda A_\Lambda)} = \partial A$.
\end{proposition}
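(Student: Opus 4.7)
The plan is to handle the two equalities in turn, using Lemma \ref{aperti} and a short overspill argument as the main tools.

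For $\sh{A_\Lambda} = \overline{A}$: the inclusion $\overline{A} \subseteq \sh{A_\Lambda}$ is immediate from Lemma \ref{aperti}, which for each $x \in \overline{A}$ produces $p \in A_\Lambda \cap \mu(x)$, so that $\sh{p}=x$. The reverse inclusion is a routine transfer: if $p \in A_\Lambda$ is finite, then $p \in \ns{A}$ and $p \sim \sh{p}$, so for every standard $\delta > 0$ the internal statement ``$\exists y \in \ns{A}$ with $|y - \sh{p}| < \delta$'' holds (witnessed by $p$), and by transfer there is a standard point of $A$ within $\delta$ of $\sh{p}$. Hence $\sh{p} \in \overline{A}$.

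For $\sh{(\partial_\Lambda A_\Lambda)} \subseteq \partial A$: given $p \in \partial_\Lambda A_\Lambda$ there is $y \in \ns{A}^c$ with $|p - y|_\infty \leq \varepsilon$, so $y \sim p$ and $\sh{y} = \sh{p}$. Repeating the transfer argument of the previous paragraph with the set $A^c$ in place of $A$ yields $\sh{p} \in \overline{A^c}$, and combined with $\sh{p} \in \overline{A}$ from the first equality this places $\sh{p}$ in $\partial A$.

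The converse inclusion $\partial A \subseteq \sh{(\partial_\Lambda A_\Lambda)}$ is the main obstacle: given $x \in \partial A$ I must produce a grid point that is simultaneously in $\ns{A}$ and within one grid-step of $\ns{A}^c$. Since $x \in \overline{A^c}$, transferring ``$\forall n \in \N,\ \exists y \in A^c,\ |y - x| < 1/n$'' and evaluating at an infinite $n$ supplies $q \in \ns{A}^c$ with $q \sim x$; let $q' \in \Lambda^k$ be a closest grid point to $q$, so that $|q' - q|_\infty \leq \varepsilon/2$ and $q' \sim x$. If $q' \in A_\Lambda$ then $q$ itself witnesses $q' \in \partial_\Lambda A_\Lambda$, and $\sh{q'}=x$. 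Otherwise $q' \in \Lambda^k \setminus A_\Lambda \subseteq \ns{A}^c$; by Lemma \ref{aperti} pick $p_0 \in A_\Lambda \cap \mu(x)$ and connect $p_0$ to $q'$ by an internal grid path that changes one coordinate at a time by $\pm \varepsilon$. Since every intermediate coordinate lies between the corresponding coordinates of $p_0$ and $q'$, both of which are infinitesimally close to the components of $x$, every point of the path is in $\mu(x)$. The path starts in $A_\Lambda$ and ends outside it, so at its first transition we find $p \in A_\Lambda$ whose successor lies in $\Lambda^k \setminus A_\Lambda \subseteq \ns{A}^c$; this $p$ belongs to $\partial_\Lambda A_\Lambda$ and satisfies $\sh{p}=x$.
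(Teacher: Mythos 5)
Your argument is correct, and for the delicate inclusion $\partial A \subseteq \sh{(\partial_\Lambda A_\Lambda)}$ it takes a genuinely different route from the paper. The paper picks $y \in A_\Lambda$ minimizing $|\ns{x}-y|_\infty$ (possible because $A_\Lambda$ is hyperfinite) and shows by contradiction that this minimizer must lie in $\partial_\Lambda A_\Lambda$: if it did not, one could move one grid step of size $\varepsilon$ toward $\ns{x}$ in every coordinate realizing the max norm, stay inside $A_\Lambda$, and get strictly closer, contradicting minimality. You instead produce $q \in \ns{A}^c$ with $q \sim x$, round it to a grid point $q'$, and when $q' \notin A_\Lambda$ you run a discrete intermediate-value argument: a coordinate-monotone internal grid path from a point $p_0 \in A_\Lambda \cap \mu(x)$ (supplied by Lemma \ref{aperti}) to $q'$ stays in the monad of $x$, and its first exit from $A_\Lambda$ yields the desired point of $\partial_\Lambda A_\Lambda$; the existence of that first transition is exactly the internal least-element principle applied to the hyperfinite path, which is the counterpart of the paper's appeal to hyperfiniteness for the existence of a minimizer. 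The paper's extremal argument is shorter and needs no case split or path construction, at the cost of the slightly fiddly verification that the perturbed point $\tilde{y}$ is strictly closer; your crossing argument is more transparent about where internality enters and hands you the witness in $\ns{A}^c$ at distance exactly one grid step. Your treatment of the easy parts (the first equality via transfer, and $\sh{(\partial_\Lambda A_\Lambda)} \subseteq \partial A$ via $\partial A = \overline{A} \cap \overline{A^c}$) is a hands-on version of the paper's appeal to the nonstandard characterization of the boundary and is equally fine.
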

\begin{proof}
	The equality $\sh{A_\Lambda} = \overline{A}$ is a consequence of Lemma \ref{aperti}.
	
	Recall the nonstandard characterization of the boundary of $A$: $x \in \partial A$ if and only if there exists $y \in \ns{A}$, $x \not = y$, and $z \in \ns{A}^c$ with $x \sim y \sim z$.
	This is sufficient to conclude that $\partial A \supseteq \sh{ (\partial_\Lambda A_\Lambda)}$.
	
	To prove that the other inclusion holds, we only need to show that if $x \in \partial A$, then there exists $y \in \partial_\Lambda A_\Lambda$ with $y \sim x$.
	Let $x \in \partial A$: since $A_\Lambda$ is a hyperfinite set, we can pick $y \in A_\Lambda$ satisfying $$|\ns{x}-y|_\infty = \min_{z \in A_\Lambda}\{|\ns{x}-z|_\infty\}.$$
	Our choice of $y$ and the hypothesis that $x \in \partial A$ ensure that $y \not = \ns{x}$ and $|\ns{x}-y|_\infty \sim 0$.
	We claim that $y \in \partial_\Lambda A_\Lambda$.
	In fact, suppose towards a contradiction that $y \not\in \partial_\Lambda A_\Lambda$: in this case, for all $z \in \ns{A}^c$, $|y-z|_\infty>\varepsilon$ and, in particular, $|\ns{x}-y|_\infty > \varepsilon$. 
	Let $\ns{x}-y = \sum_{i = 1}^k a_i e_i$, let $I = \{i \leq k : |a_i| = |\ns{x}-y|_\infty\}$, and define
	$$\tilde{y} = y+\sum_{i \in I} \frac{a_i}{|a_i|}\varepsilon e_i.$$
	Since $|\tilde{y}-y|_\infty = \varepsilon$ and since $y \not\in \partial_\Lambda A_\Lambda$, then $\tilde{y} \in A_\Lambda$.
	Moreover,
	$$|\ns{x}-\tilde{y}|_\infty = \max_{i\not\in I}\{|\ns{x}-y|-\varepsilon, |a_i|\} < |x-y|_\infty,$$
	contradicting $|\ns{x}-y|_\infty = \min_{z \in A_\Lambda}\{|\ns{x}-z|_\infty\}$.
\end{proof}

Let $\Omega \subseteq \R^k$ be an open set.
By Proposition \ref{topologia bella}, this hypothesis is sufficient to ensure the equalities $\sh{\dom} = \sh{\overline{\Omega}_\Lambda} = \overline{\Omega}$ and $\sh{(\partial_\Lambda \dom)} = \sh{(\partial_\Lambda \overline{\Omega}_\Lambda)} = \partial \Omega$.

\subsection{Derivatives and integrals of grid functions}

Since grid functions are defined on a discrete set, there is no notion of derivative for grid functions.
However, in nonstandard analysis it is fairly usual to replace the derivative by a finite difference operator with an infinitesimal step.

\begin{definition}[Grid derivative]\label{fd}
	For an internal grid function $f \in \grid{\dom}$, we define the $i$-th forward finite difference of step $\varepsilon$ as
	\begin{equation*}
	\D_i f(x)= \D_i^+ f(x)= \frac{f(x+\varepsilon e_i)-f(x)}{\varepsilon }
	\end{equation*}
	and the $i$-th backward finite difference of step $\varepsilon$ as
	$$
	\D_i^- f(x) = \frac{f(x)-f(x-\varepsilon e_i)}{\varepsilon }.
	$$
	If $n \in \ns{\N}$, $\D^n_i$ is recursively defined as $\D_i(\D_i^{n-1})$ and, if $\alpha$ is a multi-index, then $\D^\alpha$ is defined as expected:
	$$
	\D^\alpha f = \D_1^{\alpha_1} \D_2^{\alpha_2} \ldots \D_n^{\alpha_k} f.
	$$
	These definitions can be extended to $\D^-$ by replacing every occurrence of $\D$ with $\D^-$.
\end{definition}
For further details about the properties of the finite difference operators, we refer to Hanqiao, St.\ Mary and Wattenberg \cite{watt}, to Keisler \cite{keisler} and to van den Berg \cite{imme2, imme1}.

\begin{remark}
	Notice that if $f \in \grid{\dom}$ and if $\alpha$ is a standard multi-index, then $\D^{\alpha} f$ is not defined on all of $\dom$.
	However, if we let
	$$
	\dom^\alpha = \{ x \in \dom : \D^{\alpha} f \text{ is defined} \}
	= \{ x \in \dom : x+\alpha\varepsilon\in\dom \}
	$$
	then we have $\sh{\dom^\alpha} = \sh{\dom} = \overline{\Omega}$, since for every $x \in \dom^\alpha$ we have $x+\alpha\varepsilon\in\dom$ and $x+\alpha\varepsilon \sim x$ by the standardness of $\alpha$.
	
	In a similar way, if we define 
	$$
	\partial_\Lambda^\alpha \dom = \{ x \in \dom : x+\alpha\varepsilon\in\partial_\Lambda\dom \},
	$$
	then, from the relation $x+\alpha\varepsilon \sim x$ and from Proposition \ref{topologia bella}, we deduce that it holds also the equality $\sh{\partial_\Lambda^\alpha \dom} = \sh{\partial_\Lambda \dom} = \partial\Omega$.
	In section \ref{linear pdes}, we will use this result in order to show show how Dirichlet boundary conditions can be expressed in the sense of grid functions.
	
	Since $\dom^\alpha$ is a faithful extension of $\Omega$ in the sense of proposition \ref{topologia bella}, we will often abuse notation and write $\D^{\alpha} f \in \grid{\dom}$ instead of the correct $\D^\alpha f \in \grid{\dom^\alpha}$.
\end{remark}

In the setting of grid functions, integrals are replaced by hyperfinite sums.

\begin{definition}[Grid integral and inner product]\label{def inner}
	Let $f,g : \ns{\Omega} \rightarrow \hR$ and let $A \subseteq \dom \subseteq \Lambda^k$ be an internal set.
	We define
	$$
	\int_{A} f(x) d\Lambda^k = \varepsilon^k \cdot \sum_{x \in A} f(x)
	$$
	and
	$$
	\langle f, g \rangle
	=  \displaystyle \int_{\Lambda^k} f(x) g(x) d\Lambda^k 
	=  \displaystyle \varepsilon^k \cdot \sum_{x \in \Lambda^k} f(x)g(x),
	$$
	with the convention that, if $x \not \in \ns{\Omega}$, $f(x) = g(x) = 0$.
\end{definition}

A simple calculation shows that the fundamental theorem of calculus holds.
In particular, for all $f:\grid{\Lambda}\rightarrow \hR$ and for all $a, b \in \Lambda$, $b < N$, we have
$$\varepsilon \sum_{x=a}^{b} \D f(x) = f(b+\varepsilon)-f(a) \text{ and } \D \left( \varepsilon \sum_{x=a}^b f(x) \right) = f(b+\varepsilon).$$
The next Lemma is a well-known compatibility result between the grid integral and the Riemann integral of continuous functions.

\begin{lemma}\label{equivalenza integrali}
	Let $A \subset \R^k$ be a compact set.
	If $f \in C^0(A)$, then $$\int_{A_{\Lambda}} \ns{f}(x) d\Lambda^k \sim \int_{A} f(x) dx.$$	
\end{lemma}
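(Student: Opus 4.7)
My plan is to compare the hyperfinite grid sum with a classical Riemann sum on a standard coarse grid, and use the uniform continuity and boundedness of $f$ on the compact set $A$. The key arithmetic observation is that because $N=N_0!$ with $N_0$ infinite, every standard natural number $n$ divides $N$, so the coarse standard grid of mesh $1/n$ is contained in $\Lambda$, and each coarse cube of side $1/n$ is tiled by exactly $(N/n)^k$ grid cells of side $\varepsilon = 1/N$.

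First I would fix an arbitrary standard $\delta>0$. By compactness $f$ is bounded by some $M>0$ and uniformly continuous, so there is a standard $\eta>0$ such that $|x-y|<\eta$ and $x,y\in A$ imply $|f(x)-f(y)|<\delta$. Choose a standard $n$ with $\sqrt{k}/n<\eta$ so that the oscillation of $f$ on any cube of side $1/n$ intersected with $A$ is at most $\delta$. Partition $\R^k$ into cubes $Q_p$ of side $1/n$ indexed by $p\in (n^{-1}\Z)^k$, and split them into three classes: interior cubes $I_n=\{p : Q_p\subseteq A\}$, exterior cubes $E_n=\{p : Q_p\cap A=\emptyset\}$, and boundary cubes $B_n$ (the rest). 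Standard Riemann-sum theory (applicable because $\int_A f\,dx = \int_{\R^k} f\chi_A\,dx$ and $f\chi_A$ is bounded with discontinuities only on $\partial A$) yields, provided $A$ is Jordan measurable,
$$\Bigl| \int_A f\,dx - \sum_{p\in I_n} f(p) \cdot n^{-k}\Bigr| \leq \delta\cdot\mathrm{vol}(A) + M\cdot\mathrm{vol}\bigl((\partial A)_{1/n}\bigr),$$
where $(\partial A)_{1/n}$ is the $1/n$-neighborhood of $\partial A$.

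Next I would compare this classical sum with the grid sum. By transfer, the uniform-continuity estimate holds on $\ns{A}$, and for each $p\in I_n$ every grid point $x$ in the cube $Q_p$ lies in $\ns{A}$ and satisfies $|\ns{f}(x)-f(p)|<\delta$. Since the $(N/n)^k$ grid cells in $Q_p$ have total volume $(1/n)^k$, the hyperfinite sum restricted to $Q_p$ differs from $f(p)\cdot n^{-k}$ by at most $\delta\cdot n^{-k}$. For $p\in B_n$, only a rough bound is available: the contribution is at most $M\cdot n^{-k}$, and the number of boundary cubes times $n^{-k}$ is controlled by $\mathrm{vol}((\partial A)_{1/n})$. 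Combining,
$$\Bigl| \varepsilon^k\!\!\sum_{x\in A_\Lambda}\!\ns{f}(x) - \sum_{p\in I_n} f(p)\cdot n^{-k}\Bigr| \leq \delta\cdot\mathrm{vol}(A) + 2M\cdot\mathrm{vol}\bigl((\partial A)_{1/n}\bigr).$$

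Adding the two estimates and then letting first $n\to\infty$ (which forces $\mathrm{vol}((\partial A)_{1/n})\to 0$, using $\leb(\partial A)=0$) and then $\delta\to 0$, I conclude that the difference $\varepsilon^k\sum_{x\in A_\Lambda}\ns{f}(x) - \int_A f\,dx$ is smaller in absolute value than every standard positive real, hence infinitesimal. The main obstacle I anticipate is the boundary estimate: the statement is really only clean for compact $A$ with $\leb(\partial A)=0$ (Jordan measurable). For pathological compact sets (e.g.\ fat Cantor sets) the grid sum and the Lebesgue integral of $f$ over $A$ can disagree by a non-infinitesimal amount because grid points near the boundary carry macroscopic weight, so the regularity of $\partial A$ is the essential hypothesis hidden in the statement. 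A cleaner alternative, avoiding any regularity assumption, would be to require either that $A$ be a finite union of closed boxes or that $f$ be extended continuously to a box containing $A$ and the integral be taken over that box; in either case the boundary term vanishes trivially.
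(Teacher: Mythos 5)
Your argument is sound, but note that the paper does not actually prove this lemma: its ``proof'' is a citation of Section 1.11 of \cite{nsa working math}, where the compatibility of hyperfinite sums with Riemann integration is developed. So your proposal is a self-contained replacement rather than a variant of the paper's argument. The mechanics are correct: the observation that $N=N_0!$ makes every standard mesh $1/n$ an exact multiple of $\varepsilon$, so that each coarse cube contains exactly $(N/n)^k$ grid points, is precisely the right use of the factorial hypothesis; the transfer of the uniform-continuity modulus to $\ns{A}$ is legitimate; and the interior/boundary/exterior bookkeeping, followed by taking standard parts and letting $n\to\infty$ and then $\delta\to 0$, gives the infinitesimal estimate. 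Your closing caveat is not excessive caution but a genuine observation about the statement itself: for an arbitrary compact $A$ the lemma can fail, since (as the paper itself remarks just before Lemma \ref{aperti}) $A\cap\Q^k=\emptyset$ forces $A_\Lambda=\emptyset$, so a fat Cantor set of positive measure avoiding $\Q^k$ gives grid integral $0$ against a strictly positive integral already for $f\equiv 1$. The hidden hypothesis is thus Jordan measurability, $\leb(\partial A)=0$, exactly as you isolate; this is harmless for the paper's actual uses of the lemma (e.g.\ in Lemma \ref{lemma test} one may take $A$ to be a closed box enclosing the compactly contained support), but your proof makes explicit a regularity requirement that the bare citation leaves implicit.
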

\begin{proof}
	See for instance Section 1.11 of \cite{nsa working math}.
\end{proof}

In order to introduce the grid functions that correspond to real distributions, we will use a notion of duality induced by the inner product \ref{def inner}.

\begin{definition}
	For any $V \subseteq \grid{\dom}$, we define
	$$V' = \{ f \in \grid{\dom} : \langle g, f \rangle \text{ is finite for all } g \in V\}.$$
\end{definition}

\begin{lemma}
	For any $V \subseteq \grid{\dom}$, $V'$ with pointwise sum and product is a module over $\fin$.
	Moreover, $V'/\equiv$ inherits a structure of real vector space from $V'$.
\end{lemma}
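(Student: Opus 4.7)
The plan is to verify the two structural claims by showing first that the finiteness of pairings is preserved under pointwise operations, and then that the equivalence relation $\equiv$ — which in this setting should be taken to be $f \equiv g$ iff $\langle h, f-g\rangle \sim 0$ for every $h\in V$, i.e.\ the equivalence induced by duality against $V$ — is compatible with both the addition on $V'$ and with scalar multiplication by $\fin$, and further collapses the $\fin$-action into an $\R$-action modulo infinitesimals.

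First I would handle the module claim. The grid inner product $\langle h, \cdot\rangle$ is a hyperfinite sum, so by transfer of finite bilinearity it is additive in each argument and pulls out $\hR$-scalars. Hence if $f_1,f_2\in V'$ and $\lambda_1,\lambda_2\in\fin$, then for every $h\in V$
\[
\langle h, \lambda_1 f_1+\lambda_2 f_2\rangle \;=\; \lambda_1\langle h, f_1\rangle + \lambda_2\langle h, f_2\rangle,
\]
which is a $\fin$-linear combination of finite numbers, hence finite. This shows $V'$ is closed under pointwise sum and under scaling by $\fin$. All the module axioms (associativity, unit, distributivity on both sides) are inherited from the fact that $\grid{\dom}$ is an $\hR$-module under pointwise operations and $\fin$ is a subring of $\hR$; no calculation is really needed beyond pointing out that the axioms restrict.

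Next I would pass to the quotient. One needs three checks. (i) Compatibility with sum: if $f_1\equiv f_1'$ and $f_2\equiv f_2'$ then $\langle h,(f_1+f_2)-(f_1'+f_2')\rangle = \langle h,f_1-f_1'\rangle + \langle h,f_2-f_2'\rangle \sim 0$ since a sum of two infinitesimals is infinitesimal. (ii) Compatibility of $\fin$-action with $\equiv$: if $f\equiv g$ and $\lambda\in\fin$, then $\langle h,\lambda(f-g)\rangle = \lambda\langle h,f-g\rangle$ is a finite number times an infinitesimal, hence infinitesimal, so $\lambda f\equiv \lambda g$. (iii) Crucially, the action descends from $\fin$ to $\R$: if $\lambda\sim\lambda'$ (both finite) and $f\in V'$, then for every $h\in V$
\[
\langle h, (\lambda-\lambda') f\rangle \;=\; (\lambda-\lambda')\langle h, f\rangle,
\]
which is an infinitesimal times a finite number and therefore infinitesimal, so $\lambda f\equiv \lambda' f$. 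Consequently, for $r\in\R$ the definition $r\cdot[f] := [\lambda f]$ for any $\lambda\in\fin$ with $\sh\lambda=r$ is independent of the representative both of $f$ and of $\lambda$. The vector space axioms for $V'/\equiv$ over $\R$ then follow at once from the $\fin$-module axioms for $V'$ by passing to standard parts.

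The main subtlety is point (iii) above: it is the step that actually uses the duality definition of $\equiv$ in an essential way, because without the hypothesis $f\in V'$ the product $(\lambda-\lambda')\langle h,f\rangle$ would pair an infinitesimal against a possibly infinite quantity and need not be infinitesimal. Everything else is just routine transfer of bilinearity of the inner product and the observation that $\mu$ (the infinitesimals) form an ideal of $\fin$ with $\fin/\mu\cong\R$.
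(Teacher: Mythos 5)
Your proof is correct; the paper states this lemma without any proof, and your verification --- bilinearity of the hyperfinite inner product for the $\fin$-module structure, plus the observation that finiteness of $\langle h,f\rangle$ for $f\in V'$ is precisely what lets the $\fin$-action descend to an $\R$-action on the quotient --- is exactly the routine argument it leaves implicit. Your reading of $\equiv$ as duality against $V$ is the reasonable one at this point of the paper (the relation $\equiv$ of Definition \ref{def equiv} is only introduced later, by duality with $\test(\dom)$); under that later reading your step (iii) would require $V'\subseteq\test'(\dom)$, though the real vector space structure given by multiplication by standard scalars survives under either reading, so nothing essential is affected.
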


Notice that, contrary to what happened for the space $S^0(\dom)$, $V'$ is not an algebra, since in general the hypothesis $f, g \in V'$ is not sufficient to ensure that $fg \in V'$.

\subsection{$S^\alpha$ functions and $C^\alpha$ functions}\label{S-continuous}

We will now define functions of class $S^\alpha$, that will be the grid functions counterpart of functions of class $C^\alpha$.
This definition is grounded upon the well-known notion of S-continuity, as S-continuity has been widely used as a bridge between discrete functions of nonstandard analysis and standard continuous functions.

\begin{definition}
	We will say that $x \in \dom$ is nearstandard in $\Omega$ iff there exists $y \in \Omega$ such that $x \sim y$.
\end{definition}

\begin{definition}
	We say that a function $f \in \grid{\dom}$ is of class $S^0$ iff
	$f(x)$ is finite for some nearstandard $x \in \dom$ and
	for every nearstandard $x, y \in \dom$, $x \sim y$ implies $f(x) \sim f(y)$.
	
	We also define functions of class $S^\alpha$ for every multi-index $\alpha$:
	\begin{itemize}
		\item $f$ is of class $S^\alpha(\dom)$ if $\D^\alpha f \in S^0(\dom)$;
		\item $f$ is of class $S^{\infty}(\dom)$ if $\D^\alpha f \in S^0(\dom)$ for any standard multi-index $\alpha$.
	\end{itemize}
\end{definition}

Notice that if $f \in S^\alpha(\dom)$ for some standard multi-index $\alpha$, then $f(x)$ is finite at all nearstandard $x \in \dom$.

In the study of $S$-continuous functions, we find it useful to introduce the following equivalence relation.

\begin{definition}\label{equiv1}
	Let $f, g \in \grid{\dom}$.
	We say that $f \equiv_S g$ iff $(f-g)(x)\sim 0$ for all nearstandard $x \in \dom$.
	From the properties of $\sim$, it can be proved that $\equiv_S$ is an equivalence relation.
	We will denote by $\pi_S$ the projection from $\grid{\dom}$
	to the quotient space $\grid{\dom} / \equiv_S$,
	and will denote by $[f]_S$ the equivalence class of $f$ with respect to $\equiv_S$.
\end{definition}

The rest of this section is devoted to the proof that the quotient $S^\alpha(\dom) / \equiv_S$ is real algebra isomorphic to the algebra of $C^\alpha$ functions over $\Omega$.
This result is a reformulation in the language of grid functions of some results by van den Berg \cite{imme2} and by Wattenberg, Hanqiao, and St.\ Mary \cite{watt}.

\begin{lemma}\label{lemma piccolo}
	For every standard multi-index $\alpha$, $S^\alpha(\dom)$ with pointwise sum and product is an algebra over $\fin$, and $S^\alpha(\dom) / \equiv_S$ inherits a structure of real algebra from $S^\alpha(\dom)$.
\end{lemma}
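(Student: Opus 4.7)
The plan is to verify the three defining properties of an algebra for $S^\alpha(\dom)$ (closure under pointwise sum, pointwise product, and scalar multiplication by $\fin$) and then check that $\equiv_S$ is a congruence so that the algebra structure descends to the quotient, with the scalar ring collapsing from $\fin$ to $\R$.

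First I would handle closure under pointwise sum and $\fin$-scaling by the linearity of the finite difference operator: $\D^\alpha(f+g) = \D^\alpha f + \D^\alpha g$ and $\D^\alpha(\lambda f) = \lambda\,\D^\alpha f$ for every $\lambda\in\fin$. Once this reduces the question to closure of $S^0(\dom)$ under sum and finite scaling, I would invoke the elementary facts that $\sim$ is preserved by sums and by multiplication with finite hyperreals: for nearstandard $x \sim y$, $(f+g)(x)-(f+g)(y) = (f(x)-f(y))+(g(x)-g(y)) \sim 0$, and $\lambda f(x)-\lambda f(y)=\lambda(f(x)-f(y))\sim 0$ since $\lambda$ is finite; finiteness at some nearstandard point is preserved by the same arithmetic.

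The crux is closure under pointwise product. I would first derive the discrete Leibniz rule
\begin{equation*}
\D_i(fg)(x) \;=\; \D_i f(x)\cdot g(x+\varepsilon e_i) \;+\; f(x)\cdot \D_i g(x)
\end{equation*}
by direct computation, then iterate it to express $\D^\alpha(fg)$ as a hyperfinite sum of products $\D^\beta f(x+\sigma\varepsilon)\cdot \D^{\alpha-\beta}g(x+\tau\varepsilon)$, where $\beta\leq\alpha$ and the shift vectors $\sigma,\tau$ have standardly bounded integer entries. For $f,g\in S^\alpha$, each factor $\D^\beta f$ and $\D^{\alpha-\beta}g$ lies in $S^0$ (reading the class $S^\alpha$ so that every derivative of order $\leq\alpha$ is $S^0$, mirroring the standard $C^\alpha$). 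Infinitesimal shifts $\sigma\varepsilon,\tau\varepsilon$ preserve both finiteness and $S$-continuity at nearstandard points; the product of two finite, $S$-continuous grid functions is again finite and $S$-continuous because $\sim$ is compatible with multiplication of finite quantities; and a finite sum of $S^0$ functions is $S^0$. This gives $\D^\alpha(fg)\in S^0$, so $fg\in S^\alpha$.

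For the descent to $S^\alpha(\dom)/\equiv_S$ I would check compatibility of each operation with $\equiv_S$. Additivity: $(f_1+g_1)-(f_2+g_2)=(f_1-f_2)+(g_1-g_2)$ is infinitesimal at every nearstandard point. Multiplicativity: write $f_1g_1-f_2g_2 = (f_1-f_2)g_1 + f_2(g_1-g_2)$; since each $f_i,g_i$ is finite at nearstandard points (being $S^\alpha$) and each difference is infinitesimal there, each summand is infinitesimal. Scalar multiplication descends from $\fin$ to $\R$ via the standard part, because if $\lambda_1\sim\lambda_2$ in $\fin$ and $f_1\equiv_S f_2$, then $\lambda_1 f_1-\lambda_2 f_2 = \lambda_1(f_1-f_2)+(\lambda_1-\lambda_2)f_2$ is $\equiv_S 0$, so the action factors through $\sh{\lambda}\in\R$.

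The main obstacle is the product step: one must carefully track the iterated Leibniz expansion and confirm that every intermediate factor $\D^\beta f,\D^{\alpha-\beta}g$ appearing there is genuinely $S^0$. This is where the reading of $S^\alpha$ as requiring $\D^\beta f\in S^0$ for all $\beta\leq\alpha$ is essential; with that in hand, the rest is bookkeeping on $\sim$ and finiteness.
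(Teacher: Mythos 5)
Your proof is correct, but it takes a more self-contained route than the paper. The paper's own proof is essentially a citation: it observes that the only non-trivial point is closure of $S^\alpha(\dom)$ under pointwise product and defers that to Proposition 2.6 of van den Berg \cite{imme2}, treating the sum, scalar, and quotient verifications as immediate. You instead prove the product closure directly, by iterating the discrete Leibniz rule $\D_i(fg)(x)=\D_i f(x)\,g(x+\varepsilon e_i)+f(x)\,\D_i g(x)$ (the same identity as Proposition \ref{chain}) to expand $\D^\alpha(fg)$ into a standardly finite sum of infinitesimally shifted products $\D^\beta f\cdot\D^{\alpha-\beta}g$, and then use compatibility of finiteness and $\sim$ with sums, with products of finite quantities, and with shifts by standard multiples of $\varepsilon$; you also spell out the congruence properties of $\equiv_S$ and the factoring of the $\fin$-action through the standard part to get a real algebra on the quotient. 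What your route buys is independence from the external reference; what the citation buys is brevity. Two points worth flagging, neither a gap: first, your argument requires reading $S^\alpha(\dom)$ as demanding $\D^\beta f\in S^0(\dom)$ for \emph{all} $\beta\leq\alpha$ (you say so explicitly); the paper's literal definition asks only for $\D^\alpha f\in S^0(\dom)$, but the paper itself uses your reading elsewhere (e.g.\ the inclusion $S^\alpha(\dom)\subseteq S^0(\dom)$ invoked in the proof of Theorem \ref{teorema equivalenza derivate}, and the remark that $S^\alpha$ functions are finite at all nearstandard points), so this is a definitional clarification consistent with the source \cite{imme2}. Second, the shifted factors $\D^\beta f(x+\sigma\varepsilon)$ are defined only on the slightly smaller grid $\dom^\alpha$, an issue the paper's remark on $\dom^\alpha$ already licenses one to ignore since it does not affect nearstandard points of $\Omega$.
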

\begin{proof}
	The only non-trivial assertion that needs to be verified is closure of $S^\alpha(\dom)$ with respect to pointwise product.
	This property is a consequence of Proposition 2.6 of \cite{imme2}.
\end{proof}

\begin{theorem}\label{teorema isomorfismo}
	$S^0(\dom)/\equiv_S$ is a real algebra isomorphic to $C^0(\Omega)$.
	The isomorphism is given by $i[f]_S=\sh{f}$.
	The inverse of $i$ is the function $i^{-1}(f) = [\ns{f}_{|\Lambda}]_S$.
\end{theorem}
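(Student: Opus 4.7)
The plan is to carry out four steps: (a) well-definedness of the map $[f]_S \mapsto \sh f$ from $S^0(\dom)/\equiv_S$ into $C^0(\Omega)$, (b) verification that it is an algebra homomorphism and is injective, (c) surjectivity, and (d) identification of the inverse via the formula $i^{-1}(f)=[\ns f_{|\Lambda}]_S$.

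For (a), given $f \in S^0(\dom)$ and $x \in \Omega$, Lemma \ref{aperti} applied to the open set $\Omega$ supplies a $y \in \dom$ with $y \sim x$, and the $S^0$ hypothesis makes $f(y)$ finite; independence from the choice of $y$ is immediate since any two such $y$ are infinitely close, so $\sh f(x) := \sh{(f(y))}$ is a well-defined element of $\R$. To prove continuity of $\sh f$ at $x_0 \in \Omega$, I would fix a compact neighbourhood $K \subset \Omega$ of $x_0$, so that every point of $K_\Lambda$ is nearstandard in $\Omega$. For each standard $\delta > 0$, the internal, downward-closed set
$$A_\delta=\{\eta\in\hR_{>0}:\forall y,y'\in K_\Lambda,\ |y-y'|<\eta\Rightarrow|f(y)-f(y')|<\delta\}$$
contains every positive infinitesimal by the $S^0$ hypothesis; overspill then yields a standard $\eta > 0$ in $A_\delta$, which, together with the approximations $y_x \sim x$ and $y_0 \sim x_0$ in $K_\Lambda$, gives $|\sh f(x) - \sh f(x_0)| \le \delta$ for all $x$ near $x_0$.

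For (b), the relation $f \equiv_S g$ says $f(y) \sim g(y)$ at every nearstandard $y$, which forces the shadows to coincide, so $i$ descends to the quotient. Preservation of sum and product is inherited from the ring homomorphism properties of the standard-part map on $\fin$, together with Lemma \ref{lemma piccolo} providing closure of $S^0$ under these operations. Conversely, if $\sh f = \sh g$, then at each nearstandard $y$ with $\sh y = x$ both $f(y)$ and $g(y)$ have shadow $\sh f(x) = \sh g(x)$, so $f(y) - g(y) \sim 0$, i.e.\ $f \equiv_S g$.

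For (c) and (d), given $\bar f \in C^0(\Omega)$, set $\tilde f = \ns{\bar f}_{|\dom}$. The nonstandard characterization of continuity of $\bar f$ at $x \in \Omega$ yields $\ns{\bar f}(y) \sim \bar f(x)$ for every nearstandard $y \in \dom$ with $\sh y = x$; this shows in one stroke that $\tilde f$ is finite at nearstandard points, that it respects $\sim$ there (two nearstandard points sharing a shadow have values infinitely close to a common real), and that $\sh{\tilde f} = \bar f$. Hence $\tilde f \in S^0(\dom)$, $i[\tilde f]_S = \bar f$, and the formula $i^{-1}(\bar f) = [\ns{\bar f}_{|\Lambda}]_S$ is confirmed. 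I expect the only non-formal step in the whole argument to be the overspill/compact-neighbourhood move that promotes microcontinuity on $K_\Lambda$ to standard $(\delta,\eta)$-continuity of $\sh f$; everything else reduces to routine bookkeeping with the standard-part operation.
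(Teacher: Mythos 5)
Your proof is correct, and it reaches the same statement by what is structurally the same map (shadow in one direction, restriction of the nonstandard extension in the other), but it is genuinely more self-contained than the paper's argument. The paper disposes of the analytic core by citation: well-definedness and continuity of $\sh{f}$ are declared ``well-known'', and surjectivity is delegated to Lemma II.6 of \cite{watt}; only the kernel computation and the ring-homomorphism property of the standard part are carried out explicitly. You instead prove the two cited ingredients directly: continuity of $\sh{f}$ via the internal set $A_\delta$ and overspill on a compact neighbourhood (a clean rendering of the classical ``S-continuity at nearstandard points implies continuity of the shadow'' argument), and surjectivity together with the inverse formula via the nonstandard characterization of continuity of $\bar f$, which simultaneously gives $\ns{\bar f}_{|\Lambda}\in S^0(\dom)$ and $\sh{(\ns{\bar f}_{|\Lambda})}=\bar f$. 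You also make explicit where Lemma \ref{aperti} is needed (to guarantee grid points in the monad of each $x\in\Omega$), a point the paper leaves tacit. What the paper's route buys is brevity and an explicit link to the literature it builds on; what yours buys is a proof readable without \cite{watt} and a clear accounting of which hypotheses are used where. One small caveat you share with the paper: in step (a) you say ``the $S^0$ hypothesis makes $f(y)$ finite'', but Definition of $S^0$ only postulates finiteness at \emph{some} nearstandard point; finiteness at \emph{all} nearstandard points is the content of the remark following the definition in the paper (and, as stated, really uses an overspill/clopenness argument and connectedness of $\Omega$, or a strengthened definition), so if you want your write-up to be fully self-contained you should either invoke that remark explicitly or insert the short argument promoting finiteness at one point to finiteness on every monad.
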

\begin{proof}
	If $f \in S^0(\dom)$, then it is well-known that $\sh{f}$ is a well-defined function and that $\sh f \in C^0(\Omega)$.
	Surjectivity of $\sh{}$ is a consequence of Lemma II.6 of \cite{watt}.
	Since
	$$
	\ker(\sh{}) = \left\{ f \in S^\alpha(\dom) : f(x) \sim 0 \text{ for all finite } x \in \dom \right\} = [0]_S,
	$$
	we deduce that $i$ is injective and surjective.
	Since $\sh{(x+y)} = \sh{x}+\sh{y}$ and $\sh{(xy)} = \sh{x}\sh{y}$ for all $x, y \in \fin$, $i_\alpha$ is an isomorphism of real algebras. 
\end{proof}

We will now show that, for grid functions of class $S^\alpha$, the finite difference operators $\D_i^+$ and $\D_i^-$ assume the role of the usual partial derivative for $C^\alpha$ functions.
In particular, these finite difference operators can be seen as generalized derivatives.

\begin{theorem}\label{teorema equivalenza derivate}
	For all $1 \leq i \leq k$ and for all standard multi-indices $\alpha$ with $\alpha_i\geq1$, the diagrams
	$$
	\begin{array}{ccc}
	\begin{array}{ccc}
	S^{\alpha}(\dom) & \stackrel{\D^+_i}{\longrightarrow} & S^{\alpha-e_i}(\dom) \\
	i \circ \pi_S \downarrow & & \downarrow i \circ \pi_S\\
	C^\alpha(\Omega) & \stackrel{D_i}{\longrightarrow} & C^{\alpha-e_i}(\Omega)
	\end{array}
	&
	\text{ and }
	&
	\begin{array}{ccc}
	S^\alpha(\dom) & \stackrel{\D^-_i}{\longrightarrow} & S^{\alpha-e_i}(\dom) \\
	i \circ \pi_S \downarrow & & \downarrow i \circ \pi_S\\
	C^\alpha(\Omega) & \stackrel{D_i}{\longrightarrow} & C^{\alpha-e_i}(\Omega)
	\end{array}
	\end{array}
	$$
	commute.	
\end{theorem}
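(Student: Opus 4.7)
The plan is to first observe that both diagrams are well-defined at the level of the horizontal maps: the operator $\D_i^+$ sends $S^\alpha(\dom)$ into $S^{\alpha-e_i}(\dom)$ because $\D^{\alpha-e_i}(\D_i^+ f) = \D^\alpha f \in S^0(\dom)$ whenever $f \in S^\alpha(\dom)$, using that the forward finite differences in different directions commute. The content of the theorem is then the commutativity of the square, namely the identity
$$D_i\,\sh{f} = \sh{(\D_i^+ f)} \text{ in } C^{\alpha-e_i}(\Omega),$$
together with its backward analogue. I would prove this by induction on $|\alpha|$, which simultaneously establishes that the vertical arrows land in the correct spaces. The crux is the base case $\alpha = e_i$: once commutativity is known for $|\alpha|=1$, applying it to each intermediate grid function $\D^\beta f$ with $\beta \leq \alpha - e_i$ produces the inductive step.

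For the base case, fix $g \in S^{e_i}(\dom)$. The central ingredient will be the discrete fundamental theorem of calculus stated immediately before Lemma \ref{equivalenza integrali}, which gives
$$g(y + n\varepsilon e_i) - g(y) = \varepsilon \sum_{j=0}^{n-1} \D_i^+ g(y + j\varepsilon e_i)$$
for all suitable $y \in \dom$ and $n \in \ns{\N}$. Because $\D_i^+ g \in S^0(\dom)$ is in particular finite on nearstandard points, this identity also shows that $g$ is itself of class $S^0(\dom)$, so that $\sh{g} \in C^0(\Omega)$ by Theorem \ref{teorema isomorfismo}.

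To evaluate $D_i\,\sh{g}$ at a standard point $x_0 \in \Omega$, I would fix a small standard $h > 0$ with $x_0 + te_i \in \Omega$ for every $t \in [0,h]$, choose $M \in \ns{\N}$ with $M\varepsilon \sim h$, and combine the telescoping identity with the S-continuity of $g$ to obtain
$$\sh{g}(x_0 + h e_i) - \sh{g}(x_0) \sim \varepsilon \sum_{j=0}^{M-1} \D_i^+ g(x_0 + j\varepsilon e_i).$$
The right-hand side is the grid integral of the $S^0$ function $\D_i^+ g$ over the segment between $x_0$ and $x_0 + h e_i$, so by Lemma \ref{equivalenza integrali} it is infinitely close to $\int_0^h \sh{(\D_i^+ g)}(x_0 + te_i)\,dt$. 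Dividing by $h$, taking standard parts, and letting $h \to 0^+$ in $\R$, the classical fundamental theorem of calculus applied to the continuous function $\sh{(\D_i^+ g)}$ yields $D_i\,\sh{g}(x_0) = \sh{(\D_i^+ g)}(x_0)$, as required. The argument for $\D_i^-$ is entirely symmetric, using the backward telescoping identity.

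The hard part will be the uniform S-continuity needed to pass from the hyperfinite sum to the Riemann integral: one must check that $\D_i^+ g(x_0 + j\varepsilon e_i) \sim \sh{(\D_i^+ g)}(x_0 + \sh{(j\varepsilon)} e_i)$ \emph{uniformly} in $j \in \{0, \ldots, M-1\}$ before invoking Lemma \ref{equivalenza integrali}. This uniformity follows from the S-continuity of $\D_i^+ g$ on a compact neighbourhood of the segment $[x_0, x_0 + he_i]$, precisely the setting in which Lemma \ref{equivalenza integrali} applies, but it is the step deserving the most care.
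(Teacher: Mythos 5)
Your proposal is correct in substance, but it follows a genuinely different route from the paper. The paper's proof is essentially two lines: it invokes Theorem \ref{teorema isomorfismo} to identify $(i\circ\pi_S)(f)$ with $\sh{f}$, and then simply quotes Lemma II.7 of \cite{watt} for the key identity $\sh{(\D^{\pm}_i f)} = D_i\,\sh{f}$. You instead reprove that identity from scratch: telescoping (the discrete fundamental theorem of calculus), comparison of the hyperfinite sum with the Riemann integral in the spirit of Lemma \ref{equivalenza integrali}, and the classical fundamental theorem of calculus applied to the continuous function $\sh{(\D_i^+ g)}$. This is, in effect, the argument underlying the cited result of Wattenberg, St.\ Mary and Hanqiao, so your version buys self-containedness and makes explicit the one delicate point (that $\D_i^+ g$ is infinitely close to $\ns{(\sh{(\D_i^+ g)})}$ uniformly along the hyperfinite segment, which holds because the maximum over a hyperfinite set is attained and all points involved are nearstandard in a compact subset of $\Omega$, so that maximum is infinitesimal), at the cost of length; the paper buys brevity at the cost of an external citation. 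A few small repairs are needed in your write-up: a standard $x_0\in\Omega$ need not lie on $\Lambda^k$, so the telescoping sum should be anchored at a grid point $y_0\sim x_0$ and transferred to $x_0$ by S-continuity; to obtain the two-sided partial derivative you should also allow $h<0$ (or use the backward telescoping identity), which is routine since $\sh{(\D_i^+g)}$ is continuous; and your claim that the telescoping identity ``also shows that $g$ is of class $S^0(\dom)$'' only controls the variation of $g$ in the direction $e_i$ --- S-continuity of $g$ in the remaining directions does not follow from $\D_i^+ g\in S^0(\dom)$ alone. This last point is not really held against you: the inclusion $S^\alpha(\dom)\subseteq S^0(\dom)$ is asserted without proof in the paper's own argument and is best read as part of the intended meaning of $S^\alpha(\dom)$ (all lower-order differences S-continuous), under which both your induction on $|\alpha|$ and the paper's proof go through.
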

\begin{proof}
	By Theorem \ref{teorema isomorfismo}, if $f \in S^\alpha(\dom)\subseteq S^0(\dom)$ then $(i_\alpha \circ \pi_S)(f) = \sh{f}$ and, by Lemma II.7 of \cite{watt}, $\sh{(\D^{\pm}_i f)} = D_i \sh{f}$.
\end{proof}

By Theorem \ref{teorema equivalenza derivate}, the isomorphism $i$ defined in Theorem \ref{teorema isomorfismo} induces an isomorphism between $S^\alpha(\dom) / \equiv_S$ and $C^\alpha(\Omega)$ as real algebras.

\begin{corollary}\label{corollario isomorfismo}
	For any multi-index $\alpha$, the isomorphism $i$ restricted to $S^\alpha(\dom) / \equiv_S$ induces an isomorphism between $S^\alpha(\dom) / \equiv_S$ and $C^\alpha(\Omega)$ as real algebras.
\end{corollary}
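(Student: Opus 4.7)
The plan is to leverage Theorem \ref{teorema isomorfismo} and Theorem \ref{teorema equivalenza derivate}. Since the former already shows that $i : S^0(\dom)/\equiv_S \to C^0(\Omega)$ is an algebra isomorphism with inverse $g \mapsto [\ns{g}|_\Lambda]_S$, and since the algebra operations on $S^\alpha(\dom)/\equiv_S$ are inherited from the ambient $S^0(\dom)/\equiv_S$ by Lemma \ref{lemma piccolo}, the restriction of $i$ to $S^\alpha(\dom)/\equiv_S$ is automatically an injective real-algebra homomorphism. The entire content of the corollary is therefore to identify its image with $C^\alpha(\Omega)$, which I would split into the inclusion $i(S^\alpha(\dom)/\equiv_S)\subseteq C^\alpha(\Omega)$ and surjectivity onto $C^\alpha(\Omega)$.

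For the inclusion, I would induct on $|\alpha|$. The base case $|\alpha|=0$ is Theorem \ref{teorema isomorfismo}. For the inductive step, take $f \in S^\alpha(\dom)$ and any index $i$ with $\alpha_i \geq 1$. Theorem \ref{teorema equivalenza derivate} gives $\D_i^+ f \in S^{\alpha-e_i}(\dom)$ together with the commutation $D_i \sh{f} = \sh{(\D_i^+ f)}$. Applying the inductive hypothesis to $\D_i^+ f$ shows $D_i \sh{f} \in C^{\alpha-e_i}(\Omega)$; doing this for every $i$ with $\alpha_i \geq 1$, and combining with the base case $\sh{f}\in C^0(\Omega)$, yields $\sh{f} \in C^\alpha(\Omega)$.

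For surjectivity, let $g \in C^\alpha(\Omega)$ and set $\tilde g = \ns{g}|_\Lambda$; since $i^{-1}(g) = [\tilde g]_S$ by Theorem \ref{teorema isomorfismo}, it suffices to prove $\tilde g \in S^\alpha(\dom)$, that is, $\D^\alpha \tilde g \in S^0(\dom)$. Again induct on $|\alpha|$; the base case is Theorem \ref{teorema isomorfismo}. For the inductive step, apply the transferred mean value theorem to $g$ along the segment from $x$ to $x+\varepsilon e_i$ (which lies in $\ns{\Omega}$ for nearstandard interior $x$, since $\Omega$ is open and $\varepsilon$ is infinitesimal) to obtain
$$\D_i^+\tilde g(x) = \ns{(D_i g)}(x+\theta\varepsilon e_i)$$
for some $\theta\in\ns{[0,1]}$. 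Since $D_i g \in C^0(\Omega)$ is S-continuous at $\sh{x}$, this equals $\ns{(D_i g)}(x)$ up to an infinitesimal. Iterating this identity $|\alpha|$ times, and using continuity of each intermediate $D^\beta g$, one obtains $\D^\alpha \tilde g(x) \sim \ns{(D^\alpha g)}(x)$ for all nearstandard $x \in \dom^\alpha$. Because $D^\alpha g \in C^0(\Omega)$, the function $\ns{(D^\alpha g)}|_\Lambda$ is of class $S^0$ by Theorem \ref{teorema isomorfismo}, and $\equiv_S$-closeness preserves $S^0$, so $\D^\alpha \tilde g \in S^0(\dom)$.

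The chief obstacle is the bookkeeping in the surjectivity step: each finite-difference application to $\ns{(D^\beta g)}|_\Lambda$ introduces an infinitesimal error governed by the modulus of continuity of $D^{\beta+e_i} g$, and one must verify the accumulated error after $|\alpha|$ standard iterations stays infinitesimal uniformly on nearstandard points. For standard finite $|\alpha|$ this reduces to repeated use of S-continuity of the continuous derivatives appearing in each mean-value step, essentially the content of Lemma II.7 of \cite{watt} and Proposition 2.6 of \cite{imme2} already invoked in Theorem \ref{teorema equivalenza derivate}.
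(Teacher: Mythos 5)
Your proposal is correct and follows the same route the paper takes: the paper gives this corollary no separate proof, treating it as an immediate consequence of Theorem \ref{teorema isomorfismo} (the $S^0/\equiv_S\,\cong\,C^0$ isomorphism, with inverse $g\mapsto[\ns{g}_{|\Lambda}]_S$), Theorem \ref{teorema equivalenza derivate} (commutation of $\D_i^{\pm}$ with $D_i$ under $i\circ\pi_S$), and Lemma \ref{lemma piccolo} for the algebra structure. Your induction on $|\alpha|$ for the inclusion and the mean-value/S-continuity argument for surjectivity simply make explicit the details the paper leaves implicit (essentially Lemma II.7 of \cite{watt}), so there is nothing to correct.
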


Thanks to this isomorphism, if $f \in S^\alpha(\dom)$, we can identify the equivalence class $[f]_S$ with the standard function $\sh{f} \in C^\alpha(\Omega)$.


\section{Grid functions as generalized distributions\label{distri}}

In this section, we will study the relations between the space of grid functions and the space of distributions.
In particular, we will prove that the space of grid functions can be seen as generalization of the space of distributions, and that the operators $\D^+$ and $\D^-$ coherently extend the distributional derivative to the space of grid functions.

In order to prove the above results, we start by defining a projection from an external $\fin$-submodule of $\grid{\dom}$ to the space of distributions.
This projection is defined by duality with an external $\fin$-algebra of grid functions that is a counterpart to the space of test functions.

\begin{definition}[Algebra of test functions]
	We define the algebra of test functions over $\dom$ as follows:
	$$
	\test(\dom) =
	\left\{ f \in S^{\infty}(\dom) : \sh{\supp f} \subset \subset \Omega \right\}.
	$$
\end{definition}

The above definition provides a nonstandard counterpart of the usual space of smooth functions with compact support.

\begin{lemma}\label{lemma test}
	The isomorphism $i$ defined in Theorem \ref{teorema isomorfismo} induces an isomorphism between the real algebras
	$\test(\dom) / \equiv_S$ and $\tests(\Omega)$.
	The isomorphism preserves integrals, i.e.\ for all $\varphi \in \test(\dom)$, it holds the equality
	\begin{equation}\label{uguaglianza integrali}
	\sh{ \int_{\dom} \varphi d\Lambda^k } = \int_{\Omega} i[\varphi]_S dx.
	\end{equation}
	Moreover, if $\varphi \in \tests(\Omega)$, then $\ns{\varphi}_{|\Lambda} \in \test(\dom)$, so that $i^{-1}(\varphi) = \left[\ns{\varphi}_{|\Lambda}\right]_S \cap \test(\dom)$.
\end{lemma}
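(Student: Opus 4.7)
The plan is to verify the four assertions of the lemma in turn, reducing most of the work to Corollary \ref{corollario isomorfismo} and Lemma \ref{equivalenza integrali}, and then dealing separately with the integral identity, which is the only nontrivial piece.

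First, I would check that $i$ is well-defined on $\test(\dom)/\equiv_S$ and lands in $\tests(\Omega)$. Since $\test(\dom)\subseteq S^\alpha(\dom)$ for every standard multi-index $\alpha$, Corollary \ref{corollario isomorfismo} gives $i[\varphi]_S = \sh{\varphi} \in C^\alpha(\Omega)$ for every $\alpha$, so $\sh{\varphi}\in C^\infty(\Omega)$. For the support, observe that if $x\in\Omega$ lies outside $\sh{\supp\varphi}$, then no grid point infinitely close to $x$ belongs to $\supp\varphi$, whence $\sh{\varphi}(x)=0$; thus $\supp\sh{\varphi}\subseteq \sh{\supp\varphi}\subset\subset\Omega$, i.e.\ $\sh{\varphi}\in\tests(\Omega)$. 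Conversely, for surjectivity I would take $\varphi\in\tests(\Omega)$, set $\tilde\varphi=\ns{\varphi}_{|\Lambda}$, and note that by the standard nonstandard characterization of smooth functions, $\D^\alpha\tilde\varphi(x)\sim \ns{(D^\alpha\varphi)}(x)$ for every nearstandard $x\in\dom$ and every standard $\alpha$, so $\tilde\varphi\in S^\infty(\dom)$ and $\sh{\supp\tilde\varphi}=\supp\varphi\subset\subset\Omega$. Hence $\tilde\varphi\in\test(\dom)$ and $i[\tilde\varphi]_S=\varphi$. Injectivity of $i$ on $\test(\dom)/\equiv_S$ and its being an algebra homomorphism are inherited directly from Corollary \ref{corollario isomorfismo}, and the final claim $i^{-1}(\varphi)=[\ns{\varphi}_{|\Lambda}]_S\cap\test(\dom)$ is just a restatement of surjectivity together with injectivity modulo $\equiv_S$.

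For the integral identity \eqref{uguaglianza integrali}, the key observation is that the internal support of $\varphi\in\test(\dom)$ is already captured, up to $\equiv_S$, by a standard compact set. Concretely, choose a standard compact $K$ with $\sh{\supp\varphi}\subset K^\circ\subset K\subset\subset\Omega$. For any nearstandard $x\in\dom$ with $\sh{x}\notin K^\circ$ we have $\sh{\varphi}(\sh{x})=0$, hence $\varphi(x)\sim 0$ by $S$-continuity; a simple overspill on the internal predicate $|\varphi(x)|\le 1/n$ outside a neighborhood of $K_\Lambda$ then lets me replace $\varphi$ by a grid function supported in $K_\Lambda$ at the cost of an $\equiv_S$-null error whose hyperfinite integral over the hyperfinitely many grid points of $K_\Lambda$ is infinitesimal. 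On $K_\Lambda$, the $S^0$-function $\varphi$ satisfies $\varphi\equiv_S \ns{(\sh{\varphi})}_{|\Lambda}$, so applying Lemma \ref{equivalenza integrali} to the continuous function $\sh{\varphi}\in C^0(K)$ gives
\[
\int_{\dom}\varphi\,d\Lambda^k \;\sim\; \int_{K_\Lambda}\ns{(\sh{\varphi})}\,d\Lambda^k \;\sim\; \int_K \sh{\varphi}\,dx \;=\; \int_\Omega i[\varphi]_S\,dx,
\]
where the last equality uses that $\sh{\varphi}$ vanishes outside $K^\circ\subseteq K$.

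The main obstacle is precisely the passage from the abstract support condition $\sh{\supp\varphi}\subset\subset\Omega$ to an \emph{internal} control of $\varphi$ on a hyperfinite grid set coming from a standard compact $K$: without this, one cannot sum only over finitely many (in the internal sense) grid points and apply Lemma \ref{equivalenza integrali}. The fix is the overspill/S-continuity argument sketched above, which controls the contribution of grid points lying in $\dom\setminus K_\Lambda$ (in particular any remote or infinite points) and shows it to be infinitesimal in the hyperfinite sum. Once this reduction to a standard compact set is achieved, the remaining steps are immediate applications of the results already established in Section \ref{S-continuous}.
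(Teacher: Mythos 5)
Your treatment of the algebra isomorphism (well-definedness, injectivity, surjectivity via $\ns{\varphi}_{|\Lambda}$) and of the final claim $i^{-1}(\varphi)=[\ns{\varphi}_{|\Lambda}]_S\cap\test(\dom)$ is correct and follows essentially the same route as the paper, resting on Theorem \ref{teorema isomorfismo}, Theorem \ref{teorema equivalenza derivate} and Proposition \ref{topologia bella}. The problem lies in the one place where you supply your own estimate, namely the integral identity \ref{uguaglianza integrali}.

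Your reduction discards the contribution of the grid points in $\dom\setminus K_\Lambda$ on the strength of a uniform infinitesimal bound $|\varphi(x)|\leq 1/n$ obtained by overspill, but this does not give what you need. The discarded region can have unlimited grid volume: when $\Omega$ is unbounded (e.g.\ $\Omega=\R^k$, the typical case for test functions) one has $\varepsilon^k|\dom\setminus K_\Lambda|$ of order $(2N)^k$, so ``infinitesimal value times volume'' is of the form $\eta\cdot M$ with $\eta\sim 0$ and $M$ infinite, which need not be infinitesimal. Moreover, the premise of the overspill is not justified at the points that matter: S-continuity and the computation $\sh{\varphi}(\sh{x})=0$ only concern nearstandard $x$, while the internal statement you overspill quantifies over all of $\dom\setminus K_\Lambda$, including points infinitely close to $\partial\Omega$ and infinite points; the definition of $\test(\dom)$ constrains only the standard parts of the \emph{finite} elements of $\supp\varphi$, so your closing claim that the argument ``controls any remote or infinite points'' is not established. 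What actually closes the argument is stronger and simpler: if $x\in\dom\setminus K_\Lambda$ is finite, then $\sh{x}\notin \mathrm{int}\,K\supseteq\sh{\supp\varphi}$, hence $x\notin\supp\varphi$ and $\varphi(x)=0$ exactly (points near $\partial\Omega$ are excluded in the same way, since $\sh{\supp\varphi}\subset\subset\Omega$). Thus the hyperfinite sum restricts \emph{exactly} to $K_\Lambda$, whose volume $\varepsilon^k|K_\Lambda|\sim\leb(K)$ is finite, and there your comparison $\varphi\equiv_S\ns{(\sh{\varphi})}_{|\Lambda}$ (an infinitesimal maximum over a hyperfinite set, multiplied by a finite volume) together with Lemma \ref{equivalenza integrali} applied to $\sh{\varphi}\in C^0(K)$ yields \ref{uguaglianza integrali}. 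As written, though, the step ``infinitesimal bound outside $K_\Lambda$ implies infinitesimal contribution'' is a genuine gap.
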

\begin{proof}
	From Theorem \ref{teorema isomorfismo}, from Theorem \ref{teorema equivalenza derivate} and from the definition of $\test(\dom)$, we can conclude that the hypothesis $\varphi \in \test(\dom)$ ensures that $i[\varphi] \in \tests(\Omega)$.
	Since $\test(\dom) \subset S^0(\dom)$, injectivity of $i$ is a consequence of Theorem \ref{teorema isomorfismo}.
	
	Similarly, surjectivity of $i$ can be deduced from Theorem \ref{teorema isomorfismo} and from Theorem \ref{teorema equivalenza derivate}.
	In fact, suppose towards a contradiction that there exists $\psi \in \tests(\Omega)$ such that $\psi \not = i[\varphi]$ for all $\varphi \in \test(\Omega)$.
	Since $\psi \in C^0(\Omega)$, Theorem \ref{teorema isomorfismo} ensures that there exists $\phi \in S^0(\dom)$ with $i[\phi]=\psi$.
	If $\phi \not \in S^\infty(\dom)$, then for some standard multi-index $\alpha$, $\D^\alpha \phi \not \in S^0(\dom)$, contradicting Theorem \ref{teorema equivalenza derivate}.
	As a consequence, $i$ is an isomorphism between $\test(\dom) / \equiv_S$ and $\tests(\Omega)$.
	
	Equality \ref{uguaglianza integrali} is a consequence of the hypothesis $\sh{\supp \varphi} \subset \subset \Omega$ and of Lemma \ref{equivalenza integrali}.
	
	Now let $\varphi \in \tests(\Omega)$: by Theorem \ref{teorema isomorfismo} and by Theorem \ref{teorema equivalenza derivate}, $\ns{\varphi}_{|\Lambda} \in S^\infty(\dom)$.
	Let $A = \supp \varphi$: since $A$ is the closure of an open set, by Proposition \ref{topologia bella} $\sh{A_\Lambda} = A \subset \subset \Omega$, from which we deduce $\ns{\varphi}_{|\Lambda} \in \test(\dom)$.
	As a consequence, $i^{-1}(\varphi) = \left[\ns{\varphi}_{|\Lambda}\right]_S \cap \test(\dom)$, as we claimed.
\end{proof}

The duality with respect to the space of test functions can be used to define an equivalence relation on the space of grid functions.
This equivalence relation plays the role of a weak equality.

\begin{definition}\label{def equiv}
	Let $f, g \in \grid{\dom}$.
	We say that $f \equiv g$ iff for all $\varphi \in \test(\dom)$ it holds
	$
	\langle f, \varphi \rangle \sim \langle g, \varphi \rangle.
	$
	We will call $\pi$ the projection from $\grid{\dom}$ to the quotient
	$
	\grid{\dom} / \equiv,
	$
	and we will denote by $[f]$ the equivalence class of $f$ with respect to $\equiv$.
\end{definition}

The new equivalence relation $\equiv$ is coarser than $\equiv_S$.

\begin{lemma}
	For all $f, g \in \grid{\dom}$, $f \equiv_S g$ implies $f \equiv g$.
\end{lemma}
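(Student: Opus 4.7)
The plan is to fix $\varphi \in \test(\dom)$, set $h = f-g$, and show that the hyperfinite sum $\langle h, \varphi\rangle = \varepsilon^k \sum_{x \in \dom} h(x)\varphi(x)$ is infinitesimal. The hypothesis $f \equiv_S g$ says precisely that $h(x)\sim 0$ at every nearstandard $x \in \dom$, so my three-step strategy is: (i) localize the sum to a bounded part of the grid using $\sh{\supp\varphi}\subset \subset \Omega$; (ii) obtain a standard uniform bound on $|\varphi|$ there, using $\varphi \in S^0(\dom)$ together with Theorem \ref{teorema isomorfismo}; and (iii) upgrade the pointwise condition $h(x)\sim 0$ to a uniform infinitesimal bound on that region by overspill.

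Concretely, I would choose a bounded open $V$ with $\sh{\supp\varphi}\subset V\subset \subset \Omega$ and set $W = V_\Lambda$. Proposition \ref{topologia bella} gives $\sh{W}=\overline V \subset \Omega$, so every $x\in W$ is nearstandard in $\Omega$; moreover, every finite point of $\supp\varphi$ belongs to $W$. Since $\varphi\in S^0(\dom)$, Theorem \ref{teorema isomorfismo} identifies the class $[\varphi]_S$ with $\sh\varphi \in C^0(\Omega)$, and the continuity of $\sh\varphi$ on the compact set $\overline V$, together with S-continuity of $\varphi$, produces a standard $M$ with $|\varphi(x)|\le M$ for all $x\in W$. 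The internal set $\{r>0 : |h(x)|<r \text{ for all } x\in W\}$ contains every positive standard real (by the hypothesis on $h$, applied pointwise on $W$), so by overspill it contains some infinitesimal $\eta>0$. Using $\varepsilon^k|W|\sim \leb(V)$, which follows from Lemma \ref{equivalenza integrali} applied to a continuous cutoff of $\chi_V$, the contribution to $\langle h,\varphi\rangle$ from $W$ is bounded in absolute value by $M\eta\cdot\varepsilon^k|W| \sim M\eta\cdot\leb(V) \sim 0$.

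The main obstacle is the complementary piece, namely the sum over $\supp\varphi\setminus W$, which consists entirely of non-nearstandard points of $\dom$. On such points, neither the hypothesis on $h$ nor S-continuity of $\varphi$ delivers pointwise control, so the condition $\sh{\supp\varphi}\subset \subset \Omega$ must be the agent that annihilates this tail; this is the same device already used in deriving identity \ref{uguaglianza integrali} in Lemma \ref{lemma test}, and I would invoke it here in the same way. Once this tail is shown to be negligible, the uniform estimates on $W$ close the argument.
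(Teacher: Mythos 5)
Your estimate on $W$ is essentially the paper's own argument in a more elaborate costume: the paper simply sets $\eta=\max_{x\in\supp\varphi}\{|(f-g)(x)|\}$ (the maximum exists because $\supp\varphi$ is hyperfinite and is attained at a point to which the hypothesis $f\equiv_S g$ applies, so $\eta\sim 0$ with no overspill needed) and then bounds $|\langle f-g,\varphi\rangle|\leq \eta\int_{\dom}|\varphi|\,d\Lambda^k\sim 0$, the integral being finite because $\varphi$ is a test function. Your detour through $V$, the uniform bound $M$ obtained from Theorem \ref{teorema isomorfismo}, and Lemma \ref{equivalenza integrali} is not wrong (and for the last step you only need $\varepsilon^k|W|$ finite, which follows from boundedness of $V$; the sharper claim $\varepsilon^k|W|\sim\leb(V)$ is unnecessary and delicate if $\partial V$ has positive measure), but it buys nothing over taking the attained maximum on the hyperfinite set.

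The genuine gap is precisely the step you flag and then defer: the sum over $\supp\varphi\setminus W$. Appealing to ``the same device as in the derivation of \ref{uguaglianza integrali}'' is not an argument --- that derivation is itself a one-line assertion in Lemma \ref{lemma test}, and nothing you have written controls either $f-g$ or $\varphi$ at non-nearstandard points, so no estimate of the kind you use on $W$ can be run there. The paper closes this by reading the condition $\sh{\supp\varphi}\subset\subset\Omega$ as saying that every point of $\supp\varphi$ is nearstandard with standard part in a compact subset of $\Omega$; this reading is implicit in its claim that $\eta\sim0$ (and is used again, for instance, in the proof of Lemma \ref{shift2}). Under that reading your ``tail'' is empty: as you yourself observe, every finite point of $\supp\varphi$ already lies in $W=V_\Lambda$, so once non-nearstandard points are excluded from $\supp\varphi$ there is nothing left outside $W$. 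The fix is therefore one sentence --- state that $\supp\varphi$ consists of nearstandard points only (or argue directly with the attained maximum over $\supp\varphi$, as the paper does) --- rather than leaving what you call the main obstacle to an unspecified device.
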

\begin{proof}
	We will show that $f \equiv_S g$ implies $\langle f-g, \varphi\rangle \sim 0$ for all $\varphi \in \test(\dom)$: by linearity of the hyperfinite sum, this result is equivalent to $f \equiv g$.
	
	Let $\varphi \in \test(\dom)$, and let $\eta = \max_{x \in \supp \varphi} \{|(f-g)(x)|\}$.
	The hypothesis that $f \equiv_S g$ and the hypothesis that $\sh{\supp \varphi}$ is bounded are sufficient to ensure that $\eta \sim 0$.
	As a consequence, we have the following inequalities
	$$
	\displaystyle
	\left| \langle f-g,\varphi\rangle \right|
	\leq 
	\displaystyle
	\langle |f-g|,|\varphi|\rangle 
	\leq 
	\displaystyle
	|\eta| \int_{\dom} |\varphi(x)| d\Lambda^k \sim 0,
	$$
	that are sufficient to conclude the proof.
\end{proof}

We can now define the grid distributions as the the dual of $\grid{\dom}$ with respect to the inner product introduced in Definition \ref{def inner}.

\begin{definition}\label{def bounded grid functions}
	The $\fin$-module
	$$
	\test'(\dom)
	=
	\left\{ f\in \grid{\dom}\ |\ \langle f, \varphi \rangle \text{ is finite for all } \varphi\in\test(\dom)\right\}
	$$
	is called the module of grid distributions.
\end{definition}

The rest of this section is devoted to the proof that the quotient $\test'(\dom) / \equiv$ is real vector space isomorphic to the space of distributions $\tests'(\Omega)$.
The following characterization of grid distributions will be used in the proof of this isomorphism.

\begin{lemma}\label{character}
	The following are equivalent:
	\begin{enumerate}
		\item $f \in \B$;
		\item $\langle f, \varphi \rangle \sim 0$ for all $\varphi \in \test(\dom)$ satisfying $\varphi(x) \sim 0$ for all $x \in \dom$;
		\item $\langle f, \ns{\varphi} \rangle \in \fin$ for all $\varphi \in \tests(\Omega)$.
	\end{enumerate}
\end{lemma}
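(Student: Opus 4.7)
The plan is to establish the cycle $(1) \Rightarrow (2) \Rightarrow (1)$ via a common scaling argument, and then $(1) \Leftrightarrow (3)$ through the trivial direction together with a decomposition based on Lemma~\ref{lemma test}. The central tool will be the following \emph{scaling lemma}, which draws on the $|\tests'(\Omega)|$-saturation hypothesis: if $\varphi \in \test(\dom)$ satisfies $\varphi(x) \sim 0$ for every $x \in \dom$, then Theorem~\ref{teorema equivalenza derivate} forces each grid derivative $\D^\alpha \varphi$ to be $\sim 0$ on the nearstandard support points, so by overspill each $\eta_\alpha := \sup_{\dom}|\D^\alpha\varphi|$ is infinitesimal for every standard multi-index $\alpha$; countable saturation then produces an infinitesimal $\delta > 0$ with $\eta_\alpha/\delta \sim 0$ for every standard $\alpha$, so that $\varphi/\delta$ still lies in $\test(\dom)$.

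For $(1) \Rightarrow (2)$: given $\varphi \in \test(\dom)$ as in (2), the scaling lemma supplies $\delta$ such that $\varphi/\delta \in \test(\dom)$; (1) then gives $\langle f, \varphi/\delta\rangle \in \fin$, so $\langle f, \varphi\rangle = \delta \cdot \langle f, \varphi/\delta\rangle \sim 0$. For $(2) \Rightarrow (1)$, I argue by contradiction: if some $\phi \in \test(\dom)$ had $c := \langle f, \phi\rangle$ infinite, then $\phi/c$ (its grid derivatives being finite divided by infinite, hence infinitesimal) lies in $\test(\dom)$ with $\phi/c \sim 0$ pointwise, and (2) would force $\langle f, \phi/c\rangle \sim 0$, contradicting $\langle f, \phi/c\rangle = 1$. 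The implication $(1) \Rightarrow (3)$ is immediate, since Lemma~\ref{lemma test} provides $\ns\varphi|_{\Lambda} \in \test(\dom)$ for every $\varphi \in \tests(\Omega)$.

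The remaining implication $(3) \Rightarrow (1)$ is the main delicate step. For $\phi \in \test(\dom)$, I will set $\varphi := i[\phi]_S \in \tests(\Omega)$; by Lemma~\ref{lemma test}, $\ns\varphi|_{\Lambda} \in \test(\dom)$ and the residual $\eta := \phi - \ns\varphi|_{\Lambda}$ satisfies $\eta \equiv_S 0$, and hence $\eta(x) \sim 0$ throughout the (nearstandard) support. Since $\langle f, \phi\rangle = \langle f, \ns\varphi|_{\Lambda}\rangle + \langle f, \eta\rangle$ and the first summand is finite by (3), the proof reduces to showing $\langle f, \eta\rangle \in \fin$. I plan to reapply the rescaling device of $(2) \Rightarrow (1)$: an infinite value $c := \langle f, \eta\rangle$ would, after division, yield $\eta/c \in \test(\dom)$ that is pointwise infinitesimal yet paired to $1$ with $f$. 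The principal obstacle will be closing this step without circularly invoking (1); the cleanest route I foresee is to establish $(3) \Rightarrow (2)$ directly by applying the scaling lemma within the sub-module $\{\eta \in \test(\dom) : \eta \equiv_S 0\}$, exploiting that rescaling there preserves the standard part being zero, after which $(2) \Rightarrow (1)$ closes the cycle.
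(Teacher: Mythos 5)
Your handling of $(1)\Leftrightarrow(2)$ and $(1)\Rightarrow(3)$ is essentially correct: the saturation-based scaling lemma (an infinitesimal $\delta$ with $\varphi/\delta\in\test(\dom)$) does the job that the paper does with a dominating test function $\psi\geq n\varphi$, and in fact it avoids the case distinction on signs; your $(2)\Rightarrow(1)$ rescaling by the infinite value $\langle f,\varphi\rangle$ is literally the paper's step, and $(1)\Rightarrow(3)$ via Lemma \ref{lemma test} is the same as in the paper.

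The genuine gap is $(3)\Rightarrow(1)$, exactly where you anticipate trouble, and your proposed way out does not close it. After the decomposition $\phi=\ns{\varphi}_{|\Lambda}+\eta$ with $\eta\equiv_S 0$, hypothesis (3) gives no control whatsoever over $\langle f,\eta\rangle$: it only constrains pairings with extensions of standard test functions, while $\eta$ is a purely nonstandard, pointwise infinitesimal object. Rescaling by an infinite $c=\langle f,\eta\rangle$ produces a pointwise infinitesimal member of $\test(\dom)$ pairing to $1$ with $f$, which contradicts (2) -- but (2) is precisely what you would need to derive from (3), so nothing is gained. Your fallback, ``establish $(3)\Rightarrow(2)$ by applying the scaling lemma within the submodule $\{\eta\in\test(\dom):\eta\equiv_S 0\}$'', cannot work: the scaling lemma maps that submodule into itself and never brings a standard test function -- hence never brings hypothesis (3) -- into play, and it is perfectly compatible with $\langle f,\eta\rangle$ being infinite. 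The missing ingredient is a comparison between the infinitesimal grid test function and a \emph{standard} one, which is how the paper argues: if (1) fails, the already-proved equivalence $(1)\Leftrightarrow(2)$ yields $\psi\in\test(\dom)$ with $\psi(x)\sim 0$ for all $x$ and $\langle f,\psi\rangle\not\sim 0$; since $\psi$ is pointwise infinitesimal and $\sh{\supp\psi}\subset\subset\Omega$, one can choose a standard $\phi\in\tests(\Omega)$ with $\ns{\phi}\geq n\psi$ for every standard $n$ (splitting $\psi$ into its positive and negative parts as in the first part of the paper's proof), and from this deduce that $\langle f,\ns{\phi}\rangle$ is infinite, contradicting (3). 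Some step of this kind -- a standard majorant, or any other device converting the non-infinitesimal pairing against $\psi$ into an infinite pairing against the extension of a standard test function -- is indispensable, and your proposal contains none.
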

\begin{proof}
	(1) implies (2), by contrapositive.
	Suppose that $\langle f, \varphi \rangle \not \sim 0$ for some $\varphi \in \test(\dom)$ with $\varphi(x) \sim 0$ for all $x \in \dom$.
	If $\varphi(x) \geq 0$ for all $x\in\dom$, take some $\psi \in \test(\dom)$ with $\psi(x) \geq n\varphi(x)$ for all $x \in \dom$ and for all $n \in \N$.
	From the inequality $|\langle f, \psi \rangle| \geq n |\langle f, \varphi \rangle|$ for all $n \in \N$, we deduce that $\langle f, \psi \rangle$ is infinite, i.e.\ that $f \not \in \B$.
	The remaining cases, namely if $\varphi(x) \leq 0$ for all $x \in \dom$ or if there exists $x, y \in \dom$ with $\varphi(x)\varphi(y)<0$, can be dealt in a similar way, thanks to the linearity of the hyperfinite sum. 
	
	(2) implies (1), by contrapositive.
	Suppose that $\langle f, \varphi \rangle = M$ is infinite for some $\varphi \in \test(\dom)$.
	Since $\varphi/M \in \test(\dom)$ and $\varphi/M(x) \sim 0$ for all $x \in \dom$, we deduce that (2) does not hold.
	
	It is clear that (1) implies (3).
	
	(3) implies (1), by contradiction.
	Suppose that $\langle f, \ns{\varphi} \rangle \in \fin$ for all $\varphi \in \tests(\Omega)$, but that $f \not \in \B$.
	Since (1) and (2) are equivalent, there exists $\psi \in \test(\dom)$ with $\psi(x) \sim 0$ for all $x \in \dom$ such that $\langle f, \psi\rangle \not \sim 0$.
	By reasoning as in the first part of the proof, we deduce that there exists $\phi \in \tests(\Omega)$ with $\langle f, \ns{\phi} \rangle \not\in \fin$, a contradiction.
\end{proof}

From the above Lemma, we deduce that the action of a grid distribution over the space of test functions is continuous.

\begin{corollary}[Continuity]\label{cor continuity}
	If $\varphi, \psi \in \test{(\dom)}$ and $\varphi \equiv_S \psi$, then $\langle f, \varphi\rangle \sim \langle f, \psi\rangle$ for all $f \in \B$.
\end{corollary}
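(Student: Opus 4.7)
The strategy is to reduce the statement to the implication $(1)\Rightarrow(2)$ of Lemma \ref{character}, applied to the difference $\chi := \varphi-\psi$. By bilinearity of the grid inner product, $\langle f,\varphi\rangle-\langle f,\psi\rangle=\langle f,\chi\rangle$, so it suffices to establish $\langle f,\chi\rangle\sim 0$.

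First, I would verify that $\chi\in\test(\dom)$. Since $S^\infty(\dom)$ contains the constants and is closed under pointwise sums and products (Lemma \ref{lemma piccolo}, applied for every standard multi-index $\alpha$), it is in particular closed under differences. For the support condition, $\supp\chi\subseteq\supp\varphi\cup\supp\psi$ gives $\sh{\supp\chi}\subseteq\sh{\supp\varphi}\cup\sh{\supp\psi}$, a finite union of sets each compactly contained in $\Omega$, hence itself compactly contained in $\Omega$.

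Second, I would check that $\chi$ satisfies the hypothesis of Lemma \ref{character}(2), namely $\chi(x)\sim 0$ for every $x\in\dom$. At a nearstandard $x\in\dom$ this is exactly the meaning of $\varphi\equiv_S\psi$. At any remaining $x$, the compact-support condition forces $\chi(x)=0$: the condition $\sh{\supp\chi}\subset\subset\Omega$ confines the finite points of $\supp\chi$ to a compact subset of $\Omega$, and any point $x$ with $\chi(x)\neq 0$ belongs to $\supp\chi$, so its standard part (if it exists) lies in $\sh{\supp\chi}\subset\subset\Omega$; in particular, $x$ must be nearstandard. Thus non-nearstandard $x\in\dom$ contribute only zeros to $\chi$, and the bound $\chi(x)\sim 0$ holds on all of $\dom$.

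Third, applying Lemma \ref{character} (the implication from $(1)$ to $(2)$) to $\chi$ yields $\langle f,\chi\rangle\sim 0$ for every $f\in\B$, which together with linearity of the inner product gives $\langle f,\varphi\rangle\sim\langle f,\psi\rangle$, as required. The only delicate step is the second: one needs the compact-containment condition on $\sh{\supp\chi}$ to genuinely prevent nonzero contributions of $\chi$ at non-nearstandard points of $\dom$, so that the hypothesis of Lemma \ref{character}(2) is met on all of $\dom$ rather than only on its nearstandard part. Once this is granted, the rest reduces to rewriting and invoking the previous lemma.
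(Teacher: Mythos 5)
Your proof is correct and takes essentially the same route as the paper: the paper's own proof also passes to the difference $\varphi-\psi$, observes that it belongs to $\test(\dom)$ and is infinitesimal at every point of $\dom$, and then concludes by Lemma~\ref{character} and linearity of the inner product. Your intermediate verifications (closure of $\test(\dom)$ under differences via Lemma~\ref{lemma piccolo}, and the support argument handling points of $\dom$ that are not nearstandard) simply spell out what the paper asserts in a single line, so the two arguments coincide.
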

\begin{proof}
	The hypotheses $\varphi, \psi \in \test{(\dom)}$ and $\varphi \equiv_S \psi$ imply $\varphi - \psi \in \test(\dom)$ and $(\varphi - \psi)(x) \sim 0$ for all $x \in \dom$.
	Then, by Lemma \ref{character}, we have $\langle f, \varphi - \psi\rangle \sim 0$
	for all $f \in \B$, as we wanted.
\end{proof}

We are now ready to prove that $\B/\equiv$ is isomorphic to the space of distributions over $\Omega$.

\begin{theorem}
	\label{bello}
	The function $\Phi:(\B/\equiv) \rightarrow \tests'(\Omega)$ defined by
	$$
	\ldual \Phi([f]), \varphi \rdual
	=
	\sh{\langle f, \ns{\varphi} \rangle}
	$$
	is an isomorphism of real vector spaces.
\end{theorem}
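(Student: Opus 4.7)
The plan is to verify, in turn, that $\Phi$ is well-defined as a map into $\tests'(\Omega)$, that it is $\R$-linear, injective, and surjective. Linearity is immediate from bilinearity of the grid inner product together with additivity of $\sh{}$ on $\fin$, so the substantive work is concentrated in the other three properties.

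For well-definedness, three sub-checks are needed. (i) The expression $\sh{\langle f, \ns\varphi\rangle}$ makes sense, which is precisely Lemma \ref{character}(3). (ii) The value is independent of the representative of $[f]$: Lemma \ref{lemma test} gives $\ns\varphi \in \test(\dom)$, so if $f \equiv g$ then by Definition \ref{def equiv}, $\langle f, \ns\varphi\rangle \sim \langle g, \ns\varphi\rangle$, and the standard parts coincide. (iii) The functional $\Phi([f])$ is continuous on $\tests(\Omega)$ in the LF-topology. For this I would argue by contradiction through overspill: if $\varphi_n \to 0$ in $\tests(\Omega)$ with supports in a common compact $K\subset\subset\Omega$ yet $|\sh{\langle f, \ns{\varphi_n}\rangle}| \geq c$ for a standard $c>0$, then by saturation one extends to an internal sequence $\{\psi_n\}_{n\in\ns\N}$ and obtains an infinite $N$ for which every standard-order $\ns$-derivative of $\psi_N$ is infinitesimal while $|\langle f, \psi_N\rangle|\geq c/2$. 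But then $\psi_N \in \test(\dom)$ with $\psi_N(x)\sim0$ for all $x\in\dom$, contradicting Lemma \ref{character}(2).

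For injectivity, suppose $\Phi([f])=0$ and let $\psi\in\test(\dom)$. Lemma \ref{lemma test} supplies $\varphi:=i[\psi]_S\in\tests(\Omega)$ with $\ns\varphi_{|\Lambda}\equiv_S\psi$, and Corollary \ref{cor continuity} then gives $\langle f,\psi\rangle\sim\langle f,\ns\varphi\rangle$, which is infinitesimal by hypothesis. Hence $f\equiv 0$ and $[f]=0$.

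The main obstacle is surjectivity. Given $T\in\tests'(\Omega)$, I would produce a representative $f$ by infinitesimal mollification. Fix a standard $\rho\in\tests(\R^k)$, $\rho\geq0$, supported in the unit ball with $\int\rho=1$, and pick by saturation a positive infinitesimal $\eta$ satisfying $\varepsilon/\eta^m \sim 0$ for every standard $m$. Set $\rho_\eta(y)=\eta^{-k}\rho(y/\eta)$ and define, for $x\in\dom$,
\[
f(x)=\begin{cases} \ns T\bigl(\rho_\eta(\,\cdot\,-x)\bigr) & \text{if } B_\eta(x)\subset\ns\Omega,\\ 0 & \text{otherwise}, \end{cases}
\]
which is internal. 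For $\varphi\in\tests(\Omega)$, internal linearity of $\ns T$ gives
\[
\langle f,\ns\varphi\rangle = \ns T\Bigl(\varepsilon^k\sum_{x\in\dom}\rho_\eta(\,\cdot\,-x)\,\ns\varphi(x)\Bigr).
\]
Viewed in the variable $y$, the argument of $\ns T$ is a Riemann sum for $(\rho_\eta*\varphi)(y)$; the choice of $\eta$ and Lemma \ref{equivalenza integrali} (applied after the rescaling $z = (y-x)/\eta$) force this sum to be infinitesimally close to $\ns{(\rho_\eta*\varphi)}$ in every standard-order $\ns$-derivative, so the transferred continuity seminorm estimate for $T$ on a standard compact neighbourhood of $\supp\varphi$ yields $\ns T(\text{sum})\sim\ns T(\ns{(\rho_\eta*\varphi)})$. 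Finally, $\rho_\eta*\varphi\to\varphi$ in $\tests(\Omega)$ as $\eta\to0^+$ combined with continuity of $T$ gives $\ns T(\ns{(\rho_\eta*\varphi)})\sim\langle T,\varphi\rangle$. Thus $\sh{\langle f,\ns\varphi\rangle}=\langle T,\varphi\rangle$ for every $\varphi\in\tests(\Omega)$, which simultaneously certifies $f\in\B$ via Lemma \ref{character}(3) and establishes $\Phi([f])=T$. The technical crux is the uniform control of derivatives in the Riemann-sum step, and this is exactly what the smallness of $\varepsilon$ relative to all powers of $\eta$ is arranged to provide.
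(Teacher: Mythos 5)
Your proposal is correct in outline and, for well-definedness, linearity and injectivity, follows essentially the paper's own path: representative-independence from Definition \ref{def equiv} together with Lemma \ref{lemma test}, and injectivity via Corollary \ref{cor continuity} applied to $\ns{(\sh{\psi})}_{|\Lambda}\equiv_S\psi$. The genuine divergence is in the two places where the paper does not argue in detail. For continuity of $\Phi([f])$ the paper simply invokes Corollary \ref{cor continuity}, while you spell out the underlying overspill/saturation step (a null sequence in $\tests(\Omega)$ with common compact support yields, at an infinite index, an element of $\test(\dom)$ that is everywhere infinitesimal, contradicting Lemma \ref{character}); this is a more explicit version of the same idea. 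For surjectivity the paper outsources the work, citing Theorem 1 of Kinoshita \cite{moto} together with Lemma \ref{character}, whereas you construct a representative directly by pairing $\ns{T}$ against translates of a mollifier at an infinitesimal scale $\eta$ chosen by saturation so that $\varepsilon/\eta^m\sim 0$ for every standard $m$. Your route is self-contained and constructive: it exhibits an explicit grid function representing $T$ (in effect reproving the hyperfinite representation theorem), at the price of carrying out quantitative Riemann-sum estimates; the paper's route is shorter but rests on an external result. One caveat on your write-up: Lemma \ref{equivalenza integrali} is stated for standard continuous integrands on standard compact sets, so it cannot literally be cited for the internal integrand $\rho_\eta(y-\cdot)\,\ns{\varphi}(\cdot)$, which depends on the infinitesimal $\eta$ and the hyperreal $y$. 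What you actually need — and what your condition $\varepsilon/\eta^m\sim 0$ is precisely designed to deliver — is the transferred elementary bound that a mesh-$\varepsilon$ Riemann sum differs from the $\ns{}$integral by at most $\varepsilon$ times the Lipschitz norm of the integrand (of order $\eta^{-k-|\alpha|-1}$ for the $\alpha$-th derivative) times the measure of a fixed compact set, uniformly in $y$; with that substitution, together with the transferred seminorm estimate for $T$ and the standard fact that $\rho_\eta\ast\varphi\rightarrow\varphi$ in $\tests(\Omega)$, your surjectivity argument is complete and Lemma \ref{character} then certifies both $f\in\B$ and $\Phi([f])=T$.
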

\begin{proof}
	At first, we will show that the definition of $\Phi$ does not depend upon the choice of the representative for $[f]$.
	Let $g, h \in [f]$: then, by definition of $\equiv$, $\sh{\langle g, \varphi \rangle} = \sh{\langle h, \varphi \rangle}$ for all $\varphi\in\test(\dom)$.
	By Lemma \ref{lemma test}, for all $\varphi \in \tests'(\Omega)$, $\ns{\varphi}_{|\Lambda} \in \test(\dom)$, so that if $g,h \in [f]$,  then $\sh{\langle g, \ns{\varphi} \rangle} = \sh{\langle h, \ns{\varphi} \rangle}$
	so that the definition of $\Phi$ is independent on the choice of the representative for $[f]$.
	
	Lemma \ref{cor continuity} ensures that for all $[f]\in \B/\equiv$, $\Phi([f]) \in \test'(\Omega)$, and in particular that $\Phi([f])$ is continuous.
	
	We will prove by contradiction that $\Phi$ is injective.
	Suppose that $\langle \Phi([f]),\varphi\rangle = 0$ for all $\varphi \in \tests(\Omega)$
	and that $[f] \not = [0]$.
	The latter hypothesis implies that there exists $\psi \in \test(\dom)$ such that $\langle f, \psi \rangle \not \sim 0$.
	But, since $\ns{(\sh{\psi})}_{|\Lambda} \equiv_S \psi$, by Corollary \ref{cor continuity} we deduce
	$$
	\ldual \Phi([f]),\sh{\psi}\rdual
	=
	\sh{\langle f, \ns{(\sh{\psi})} \rangle}
	=
	\sh{
		\langle f, \psi \rangle}
	\not = 0,
	$$
	contradicting the hypothesis $\langle \Phi([f]),\varphi\rangle = 0$ for all $\varphi \in \tests(\Omega)$.
	As a consequence, $\Phi$ is injective.
	
	Surjectivity of $\Phi$ is a consequence of Theorem 1 of \cite{moto} and of Lemma \ref{character}.
\end{proof}

Thanks to the previous theorem, from now on we will identify the equivalence class $[f]$ with the distribution $\Phi([f])$.
Notice that if $f \in S^0(\dom)$, this identification is coherent with $[f]_S$.

\begin{corollary}\label{corollario teorema equivalenza}
	If $f \in S^0(\dom)$, then $[f] = [f]_S = \sh{f}$.
\end{corollary}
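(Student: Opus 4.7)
The plan is to unwind the three identifications involved and reduce everything to Lemma \ref{equivalenza integrali}. There are two separate identifications to check: $[f]_S = \sh{f}$ as continuous functions, and $[f] = \sh{f}$ as distributions. The first is immediate from Theorem \ref{teorema isomorfismo}, since the isomorphism $i$ sends $[f]_S$ to $\sh{f}$ whenever $f \in S^0(\dom)$. The second is the content we actually have to verify.

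First I would observe that, by definition of the standard part, $f \equiv_S \ns{(\sh{f})}_{|\Lambda}$: the two agree up to an infinitesimal at every nearstandard point of $\dom$. By the lemma following Definition \ref{def equiv}, $\equiv_S$ refines $\equiv$, so $f \equiv \ns{(\sh{f})}_{|\Lambda}$, and hence $[f] = [\ns{(\sh{f})}_{|\Lambda}]$ in $\B/\equiv$. This lets me evaluate $\Phi([f])$ on an arbitrary $\varphi \in \tests(\Omega)$ using the convenient representative $\ns{(\sh{f})}_{|\Lambda}$:
$$
\ldual \Phi([f]), \varphi \rdual = \sh{\langle \ns{(\sh{f})}_{|\Lambda},\, \ns{\varphi}\rangle} = \sh{\int_{\dom} \ns{(\sh{f}\cdot \varphi)}\, d\Lambda^k}.
$$

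Next I would handle the support: since $\varphi$ has compact support $A \subset\subset \Omega$, transfer gives $\ns{\varphi}(x) = 0$ for all $x \in \ns{\Omega}\setminus \ns{A}$, so the hyperfinite sum collapses to a sum over $A_\Lambda$. The function $\sh{f}\cdot \varphi$ lies in $C^0(A)$, so Lemma \ref{equivalenza integrali} applies and yields
$$
\int_{A_\Lambda} \ns{(\sh{f}\cdot \varphi)}\, d\Lambda^k \sim \int_A \sh{f}\cdot \varphi\, dx = \int_\Omega \sh{f}\cdot \varphi\, dx.
$$
Taking standard parts gives $\ldual \Phi([f]), \varphi \rdual = \int_\Omega \sh{f}\,\varphi\, dx$, which is precisely the action of the continuous function $\sh{f}$ as a distribution. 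Combined with $[f]_S = \sh{f}$ from Theorem \ref{teorema isomorfismo}, this proves all three objects coincide under the identifications established by Theorem \ref{bello} and Theorem \ref{teorema isomorfismo}.

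I do not anticipate a serious obstacle: the only mildly delicate point is making sure the representative-switch $f \rightsquigarrow \ns{(\sh{f})}_{|\Lambda}$ is legitimate (which requires the $\equiv_S \Rightarrow \equiv$ implication already proved) and that Lemma \ref{equivalenza integrali}, stated for standard continuous functions on compact sets, can be invoked with $A = \supp \varphi$ rather than all of $\Omega$. Both are routine once the support reduction is made explicit.
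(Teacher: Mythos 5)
Your proof is correct and follows essentially the same route as the paper: both reduce $\sh{\langle f,\ns{\varphi}\rangle}$ to the Riemann integral $\int_\Omega \sh{f}\,\varphi\,dx$ via Lemma \ref{equivalenza integrali}, identifying $[f]_S$ with $\sh{f}$ through Theorem \ref{teorema isomorfismo}. Your explicit switch of representative from $f$ to $\ns{(\sh{f})}_{|\Lambda}$ using the implication $\equiv_S\Rightarrow\equiv$, and the reduction to $\supp\varphi$, simply spell out steps the paper's terse proof (which invokes S-continuity, Lemma \ref{equivalenza integrali} and Lemma \ref{lemma test}) leaves implicit.
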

\begin{proof}
	Since $f$ is S-continuous, by Lemma \ref{equivalenza integrali} and by Lemma \ref{lemma test} we have the equality
	$$
	\int_{\Omega} \sh{f} \varphi dx = \sh{\langle f, \ns{\varphi} \rangle}
	$$
	for all $\varphi \in \tests(\Omega)$, and this is sufficient to deduce the thesis.
\end{proof}

\begin{remark}\label{remark vector valued}
	If $k\in\N$, define
	$$
	\test'(\Omega, \hR^k)= \left\{ f: \dom \rightarrow \R^k : f_i \in \test'(\Omega) \text{ for all } 1 \leq i \leq k \right\}.
	$$
	If $f \in \test'(\Omega, \hR^k)$, then we can define a functional $[f]$ over the dual of the space of vector-valued test functions
	$$
	\tests(\Omega, \R^k) = \left\{ \varphi: \Omega \rightarrow \R^k : \varphi_i \in \tests(\Omega) \text{ for all } 1 \leq i \leq k \right\}
	$$
	by posing
	$
	\langle [f], \varphi \rangle_{\tests(\Omega, \R^k)} = \sum_{i = 1}^k\sh{\langle f_i, \ns{\varphi_i}\rangle}
	$
	for all $\varphi \in \tests(\Omega, \R^k)$.
	From Theorem \ref{bello}, we deduce that the quotient of the $\fin$-module
	$
	\test'(\Omega, \hR^k)
	$
	with respect to $\equiv$
	is isomorphic to the real vector space of linear continuous functionals over $\tests(\Omega, \R^k)$.
\end{remark}

\begin{remark}\label{remark estensioni}
	Theorem \ref{bello} can be used to define more general projections of nonstandard functions.
	For instance, if $f \in \ns{C^0}(\hR, \test'(\dom))$, then for all $T \in \R$ $f$ induces a continuous linear functional $[f]$ over the space $C^0([0,T],\tests'(\Omega))$ defined by the formula
	$$
	\int_0^T \ldual [f], \varphi \rdual dt = \sh{\left(\ns{\int}_0^T \langle f(t), \ns{\varphi}(t) \rangle dt \right)}
	$$
	for all $\varphi \in C^0([0,T],\tests'(\Omega))$.
	Moreover, if $f \in \ns{C^1}(\hR, \test'(\dom))$, then $[f]$ allows for a weak derivative with respect to time: for all $T \in \R$, $[f]_t$ is the distribution that satisfies
	$$
	\int_0^T \ldual [f]_t, \varphi \rdual dt = - \sh{\left(\ns{\int}_0^T \langle f(t), \ns{\varphi}(t) \rangle dt \right)}
	$$
	for all $\varphi \in C^1([0,T],\tests'(\Omega))$.
\end{remark}

\subsection{Discrete derivative and distributional derivative}\label{distributonal derivative}

In Theorem \ref{teorema equivalenza derivate}, we have seen that the finite difference operators $\D_i^+$ and $\D_i^-$ generalize the derivative for smooth functions to the setting of grid functions.
We will now see that it holds a more general result: the operators $\D_i^+$ and $\D_i^-$ generalize also the distributional derivative, in the sense that $[\D_i^{\pm} f] = D_i[f]$ for all $f \in \B$.
For a matter of commodity, we will suppose that $\dom \subseteq \Lambda$:
the generalization to an arbitrary dimension can be deduced from the proof of Theorem \ref{teorema equivalenza derivate2} with an argument relying on Theorem \ref{teorema equivalenza derivate}.

Recall the discrete summation by parts formula: for all grid functions $f$ and $g$ and for all $a, b \in \ns{\N}$ with $N^2 \leq a < b < N^2$ it holds the equality
\begin{eqnarray*}
	\sum_{n = a}^{b} (f((n+1)\varepsilon)-f(n\varepsilon))g(n\varepsilon)
	&=& f((b+1)\varepsilon)g((b+1)\varepsilon) - f(a\varepsilon)g(a\varepsilon) +\\
	&&- \sum_{n = a}^{b} f((n+1)\varepsilon) (g((n+1)\varepsilon)-g(n\varepsilon))
\end{eqnarray*}
that, in particular, implies
\begin{equation}\label{party}
\left\langle \D f, \varphi \right\rangle = - \left\langle f(x+\varepsilon), \D \varphi \right\rangle
\end{equation}
for all $f \in \grid{\dom}$ and for all $\varphi \in \test(\dom)$.

Inspired by the above formula, we will now prove that if we shift a grid distribution by an infinitesimal displacement, we still obtain the same grid distribution.

\begin{lemma}\label{shift2}
	Let $f \in \grid{\dom}$.
	Then $f(x) \in \B$ if and only if $f(x+\varepsilon)\in \B$.
	If $f(x) \in \B$ then, $[f(x)] = [f(x+\varepsilon)]$. 
\end{lemma}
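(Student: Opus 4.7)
The plan is to reduce both assertions to a change of summation index $y = x + \varepsilon$ in the defining hyperfinite sum, together with the observation that an infinitesimal shift sends test functions to test functions.

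First I would verify the auxiliary claim that if $\varphi \in \test(\dom)$, then $\varphi(\cdot \pm \varepsilon) \in \test(\dom)$ as well. The support requirement is immediate, since $\supp \varphi(\cdot \pm \varepsilon)$ equals $\supp \varphi$ translated by $\mp \varepsilon$, so its standard part still coincides with $\sh{\supp \varphi} \subset\subset \Omega$. The $S^\infty$ condition follows because discrete differentiation commutes with shifts, hence $\D^\alpha(\varphi(\cdot \pm \varepsilon)) = (\D^\alpha \varphi)(\cdot \pm \varepsilon)$, and an infinitesimal shift of an $S^0$ function is again $S^0$: nearstandard points map to nearstandard points with the same standard part, so S-continuity is preserved. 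Consequently, $\varphi \mapsto \varphi(\cdot - \varepsilon)$ is a bijection of $\test(\dom)$ onto itself.

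Given this, the equivalence $f \in \B \Leftrightarrow f(\cdot + \varepsilon) \in \B$ is immediate: for every $\varphi \in \test(\dom)$, reindexing by $y = x + \varepsilon$ yields
$$
\langle f(x+\varepsilon), \varphi(x) \rangle = \langle f(y), \varphi(y-\varepsilon) \rangle,
$$
and since $\psi := \varphi(\cdot - \varepsilon)$ again ranges over all of $\test(\dom)$ as $\varphi$ does, the left-hand side is finite for every $\varphi$ iff $\langle f, \psi \rangle$ is finite for every $\psi \in \test(\dom)$, that is, iff $f \in \B$.

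For the equality $[f(\cdot)] = [f(\cdot + \varepsilon)]$ in $\B/\equiv$, I would compute the difference and use the identity $\varphi(y-\varepsilon) - \varphi(y) = -\varepsilon\, \D^- \varphi(y)$: the same reindexing yields
$$
\langle f(x+\varepsilon) - f(x),\, \varphi(x) \rangle
= \langle f(y),\, \varphi(y-\varepsilon) - \varphi(y) \rangle
= -\varepsilon \langle f, \D^- \varphi \rangle.
$$
Since $\D^- \varphi = (\D^+ \varphi)(\cdot - \varepsilon)$ lies in $\test(\dom)$ — applying the auxiliary claim to $\D^+ \varphi$, whose $S^\infty$-ness comes from that of $\varphi$ and whose support satisfies $\sh{\supp \D^+ \varphi} \subseteq \sh{\supp \varphi}$ — the pairing $\langle f, \D^- \varphi\rangle$ is finite, and multiplication by $\varepsilon$ makes the difference infinitesimal. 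The one nontrivial point, and the main obstacle if there is any, is the auxiliary claim that infinitesimal shifts preserve membership in $\test(\dom)$; the rest is a routine rearrangement of the hyperfinite sum.
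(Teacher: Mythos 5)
Your proof is correct and takes essentially the same route as the paper: the equivalence $f\in\B \Leftrightarrow f(\cdot+\varepsilon)\in\B$ comes from the same reindexing (the paper writes it as $\langle f(x),\varphi(x)\rangle=\langle f(x+\varepsilon),\varphi(x+\varepsilon)\rangle$, justified by the support condition on test functions), and the equality of classes comes from the same discrete summation by parts. The only cosmetic difference is in the final estimate: you pair $f$ with $\D^-\varphi\in\test(\dom)$ and use that $\varepsilon$ times a finite number is infinitesimal, whereas the paper pairs $f(x+\varepsilon)$ with the pointwise-infinitesimal test function $\varepsilon\D\varphi$ and invokes Lemma \ref{character}; these amount to the same bound.
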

\begin{proof}
	The hypothesis that for all $\varphi \in \test(\dom)$ it holds $\sh{\supp \varphi} \subset \subset \Omega$ ensures the equality
	$$
	\langle f(x), \varphi(x) \rangle = \langle f(x+\varepsilon), \varphi(x+\varepsilon)\rangle
	$$
	from which we deduce the equivalence $f(x) \in \B$ if and only if $f(x+\varepsilon)\in \B$.
	
	We will now prove that, $f(x) \in \B$, then $[f(x)] = [f(x+\varepsilon)]$.
	By equation \ref{party}, we have
	\begin{equation}\label{equation mai citata}
	\langle f(x+\varepsilon)-f(x) , \varphi \rangle = - \langle f(x+\varepsilon), \varepsilon \D \varphi \rangle
	\end{equation}
	for all $\varphi \in \test(\dom)$.
	Notice that $\varphi \in \test(\dom)$ implies that $\varepsilon \D \varphi \in \test(\dom)$ and $\varepsilon \D \varphi(x) \sim 0$ for all $x \in \dom$.
	Hence, by the hypothesis $f \in \B$ and by Lemma \ref{character}, we deduce that $\langle f(x+\varepsilon), \varepsilon \D \varphi \rangle \sim 0$ for all $\varphi \in \test(\dom)$.
	By equation \ref{equation mai citata}, this is sufficient to deduce the equality $[f(x)] = [f(x+\varepsilon)]$.
\end{proof}

As a consequence of the above Lemma, we can characterize a nonstandard counterpart of the shift operator.

\begin{corollary}\label{corollario shift}
	Let $f \in \B$.
	For all $n$ such that $n\varepsilon$ is finite, $[f(x\pm n\varepsilon)] = [f](x\pm\sh{(n\varepsilon)})$. 
\end{corollary}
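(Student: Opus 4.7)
The plan is to unwind the definition of the distributional bracket, perform a change of variables in the hyperfinite sum, and then invoke Corollary \ref{cor continuity} to absorb the infinitesimal part of the shift $n\varepsilon$. The whole argument reduces to the S-continuity of a standard test function under an infinitesimal translation.

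Set $h = \sh{(n\varepsilon)} \in \R$ and $\delta = n\varepsilon - h \approx 0$. Fix a test function $\varphi \in \tests(\Omega)$ with support small enough that $\varphi(\cdot - h)$ still belongs to $\tests(\Omega)$. Using that $n\varepsilon \in \Lambda$ (since $|n| \leq N^2$ whenever $n\varepsilon$ is finite) and that $f$ and $\ns{\varphi}$ are tacitly extended by $0$ outside $\ns{\Omega}$, the substitution $y = x + n\varepsilon$ in the hyperfinite sum yields
\begin{equation*}
\langle f(x + n\varepsilon), \ns{\varphi}(x) \rangle = \langle f(y), \ns{\varphi}(y - n\varepsilon) \rangle.
\end{equation*}
Let $\psi = \ns{(\varphi(\cdot - h))}_{|\Lambda}$; by Lemma \ref{lemma test}, $\psi \in \test(\dom)$ and it represents the distribution $\varphi(\cdot - h)$ in the sense of Theorem \ref{bello}.

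Since $\varphi$ is uniformly continuous on $\R^k$, S-continuity of $\ns{\varphi}$ at finite points combined with $n\varepsilon \approx h$ gives
\begin{equation*}
\ns{\varphi}(y - n\varepsilon) - \psi(y) = \ns{\varphi}(y - h - \delta) - \ns{\varphi}(y - h) \approx 0
\end{equation*}
for every finite $y \in \dom$, i.e.\ $\ns{\varphi}(\cdot - n\varepsilon) \equiv_S \psi$. Since $f \in \B$, Corollary \ref{cor continuity} then gives $\langle f, \ns{\varphi}(\cdot - n\varepsilon)\rangle \approx \langle f, \psi\rangle$, and taking standard parts produces
\begin{equation*}
\sh{\langle f(x + n\varepsilon), \ns{\varphi}(x)\rangle} = \sh{\langle f, \psi \rangle} = \ldual [f], \varphi(\cdot - h) \rdual = \ldual [f](\cdot + h), \varphi \rdual.
\end{equation*}
This is the desired equality $[f(x + n\varepsilon)] = [f](x + h)$. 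The backward shift is symmetric.

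The main obstacle is bookkeeping around the supports of the translated test functions: one must ensure that both $\ns{\varphi}(\cdot - n\varepsilon)$ and $\psi$ really lie in $\test(\dom)$ so that Corollary \ref{cor continuity} applies. This is automatic when $\Omega = \R^k$; in the general case one either restricts to test functions $\varphi$ whose standard support has positive distance from $\partial \Omega$ exceeding $|h|$, or reinterprets $[f](\cdot + h)$ as a distribution on the translated open set $\Omega - h$. An alternative route would be to iterate Lemma \ref{shift2} by overspill to reach an internal number of unit-step shifts, but the direct change-of-variables argument sketched above avoids the awkwardness of iterating an external equivalence relation an internal number of times.
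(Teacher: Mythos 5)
Your proof is correct, and it is worth noting that it does more than the paper itself does: the paper states this corollary without proof, as a consequence of Lemma \ref{shift2}, but a literal iteration of that lemma only handles a standard number of unit shifts (hence only the case $\sh{(n\varepsilon)}=0$); for infinite $n$ with $n\varepsilon$ finite and non-infinitesimal one cannot simply repeat an external equivalence internally many times, which is exactly the obstruction you point out. Your direct route --- the exact change of variables $y=x+n\varepsilon$ in the hyperfinite sum (harmless at the ends of the grid, where $\ns{\varphi}$ vanishes), the identification $\psi=\ns{(\varphi(\cdot-h))}_{|\Lambda}\in\test(\dom)$ via Lemma \ref{lemma test}, the observation $\ns{\varphi}(\cdot-n\varepsilon)\equiv_S\psi$ from uniform continuity, and then Corollary \ref{cor continuity} together with Theorem \ref{bello} --- is the natural complete argument, and it uses only tools already available at this point of the paper. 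Your handling of the support bookkeeping is also appropriate: the statement as printed is silent about the fact that $[f](\cdot\pm h)$ is a priori a distribution on a translated copy of $\Omega$, so restricting to test functions whose translate stays in $\tests(\Omega)$ (or reading the conclusion on $\Omega\mp h$) is the right way to make the equality meaningful; the same proviso is what guarantees that the shifted grid function $\ns{\varphi}(\cdot-n\varepsilon)_{|\Lambda}$ lies in $\test(\dom)$ so that Corollary \ref{cor continuity} applies. In short: same ingredients as the paper, but assembled into an explicit proof that covers the genuinely nontrivial case the unproved corollary glosses over.
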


Thanks to the above results, we can now prove that the finite difference operators generalize the distributional derivative.

\begin{theorem}\label{teorema equivalenza derivate2}
	The diagrams
	$$
	\begin{array}{ccc}
	\begin{array}{ccc}
	\B & \stackrel{\D^+}{\longrightarrow} & \B \\
	\Phi \circ \pi \downarrow & & \downarrow \Phi \circ \pi\\
	\mbox{}\tests'({\Omega})  & \stackrel{D}{\longrightarrow} & \tests'({\Omega})  \\
	\end{array}
	&
	\text{and}
	&
	\begin{array}{ccc}
	\B & \stackrel{\D^-}{\longrightarrow} & \B \\
	\Phi \circ \pi \downarrow & & \downarrow \Phi \circ \pi\\
	\mbox{}\tests'({\Omega})  & \stackrel{D}{\longrightarrow} & \tests'({\Omega})  \\
	\end{array}
	\end{array}
	$$
	commute.
\end{theorem}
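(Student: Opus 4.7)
The plan is to verify directly that for each $f \in \B$ and each $\varphi \in \tests(\Omega)$, the two compositions produce the same distribution, i.e.
$$\ldual \Phi([\D^+ f]), \varphi \rdual \;=\; \ldual D\Phi([f]), \varphi \rdual \;=\; -\sh{\langle f, \ns{(D\varphi)}\rangle}.$$
The crucial ingredient is the discrete summation by parts formula \eqref{party}, which mimics the defining identity of the distributional derivative but introduces a unit shift in the argument of $f$. So first I would apply \eqref{party} to $\ns{\varphi} \in \test(\dom)$ (after checking that $\ns{\varphi}$ indeed lies in the test algebra, via Lemma \ref{lemma test}) to obtain
$$\langle \D^+ f, \ns{\varphi}\rangle \;=\; -\langle f(x+\varepsilon), \D \ns{\varphi}\rangle.$$
As a byproduct, this also shows $\D^+ f \in \B$: $\D\ns{\varphi}$ still belongs to $\test(\dom)$ because $\ns{\varphi} \in S^\infty(\dom)$ forces $\D\ns{\varphi} \in S^\infty(\dom)$, and shifting the support by $\varepsilon$ does not disturb the condition $\sh{\supp} \subset\subset \Omega$; then $f(x+\varepsilon) \in \B$ by Lemma \ref{shift2}, so the right-hand side is finite.

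Next I would eliminate the infinitesimal shift in the first slot using Lemma \ref{shift2}: since $f(x+\varepsilon) \equiv f(x)$, we get
$$\langle f(x+\varepsilon), \D\ns{\varphi}\rangle \;\sim\; \langle f, \D\ns{\varphi}\rangle.$$
To replace the grid derivative $\D\ns{\varphi}$ in the second slot with the standard one, I would invoke Theorem \ref{teorema equivalenza derivate} applied to $\ns{\varphi} \in S^\infty(\dom)$: it yields $\sh{(\D\ns{\varphi})} = D\varphi = \sh{(\ns{D\varphi})}$, hence $\D\ns{\varphi} \equiv_S \ns{(D\varphi)}$. Since both of these grid functions belong to $\test(\dom)$, the continuity corollary \ref{cor continuity} gives
$$\langle f, \D\ns{\varphi}\rangle \;\sim\; \langle f, \ns{(D\varphi)}\rangle.$$
Chaining the three near-equalities and taking standard parts then yields exactly $\sh{\langle \D^+ f, \ns{\varphi}\rangle} = -\sh{\langle f, \ns{(D\varphi)}\rangle}$, which is the commutativity of the first diagram.

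For the backward difference $\D^-$, the argument is entirely parallel: the analogue of \eqref{party} reads $\langle \D^- f, \varphi\rangle = -\langle f(x-\varepsilon), \D^- \varphi\rangle$ (again a discrete integration by parts, whose boundary terms vanish because $\varphi$ has standardly compact support well inside $\dom$), and then Lemma \ref{shift2} applied to the shift by $-\varepsilon$, followed by Theorem \ref{teorema equivalenza derivate} for $\D^-$ and Corollary \ref{cor continuity}, closes the argument. I do not expect a genuine obstacle in this proof — the work is really just bookkeeping that each intermediate expression still lies in $\test(\dom)$ or $\B$, so that the named lemmas apply. The only subtle point, and the one I would present most carefully, is the compatibility of support conditions under the $\varepsilon$-shifts in $\D^\pm$, so that $\D\ns{\varphi}$, $\ns{\varphi}(x\pm\varepsilon)$, and similar expressions genuinely belong to the test algebra $\test(\dom)$ and hence can be paired with grid distributions via \ref{cor continuity} and Lemma \ref{shift2}.
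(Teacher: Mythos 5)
Your proof is correct and takes essentially the same route as the paper's: both arguments rest on the discrete summation-by-parts formula \ref{party}, the infinitesimal-shift Lemma \ref{shift2}, Theorem \ref{teorema equivalenza derivate} giving $\D^{\pm}\ns{\varphi} \equiv_S \ns{(D\varphi)}$, and the continuity Corollary \ref{cor continuity}, with your chain of near-equalities simply traversed in the opposite direction. Your additional bookkeeping that $\D^{\pm} f \in \B$ and that the shifted and differenced test functions remain in $\test(\dom)$ is a detail the paper leaves implicit, not a divergence in method.
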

\begin{proof}
	We will prove that the first diagram commutes, as the proof for the second is similar.
	
	Let $f \in \B$: we have the following equality chain
	$$
	\ldual D[f],\varphi \rdual =
	-\ldual [f], D\varphi \rdual =
	-\sh{\langle f, \ns{D}\varphi \rangle}.
	$$
	By Theorem \ref{teorema equivalenza derivate}, $\ns{D}\varphi \equiv_S \D^{\pm} \ns{\varphi}$
	and, by Corollary \ref{cor continuity},
	$$
	\langle f, \ns{D}\varphi\rangle
	\sim
	\langle f, \D^{\pm} \ns{\varphi} \rangle.
	$$
	By the discrete summation by parts formula \ref{party} and by Lemma \ref{shift2} we have
	$$
	\langle f, \D^{\pm} \ns{\varphi} \rangle
	\sim
	- \langle \D^{\pm} f, \ns{\varphi} \rangle
	$$
	from which we deduce
	$$
	\ldual D[f], \varphi \rdual
	=
	\sh{\langle [\D^{\pm} f], \ns{\varphi} \rangle}
	$$
	for all $\varphi \in \tests(\Omega)$.
\end{proof}

By composing finite difference operators, the theorem is easily extended to other differential operators.
We will discuss an example of such a grid function formulation of a differential operator in Section \ref{section nonli}.
Moreover, in \cite{illposed} it is discussed a grid function formulation of the gradient and of the Laplacian.

\subsection{Discrete product rule for generalized distributions and the Schwartz impossibility theorem}\label{section schwartz}

For the usual distributions, it is a consequence of the impossibility theorem by Schwartz that no extension of the distributional derivative satisfies a product rule.
However, for the grid functions there are some discrete product rules that generalize the product rule for smooth functions.
Indeed, the following identities can be established by a simple calculation.

\begin{proposition}[Discrete product rules]\label{chain}
	Let $f, g \in \grid{\dom}$. Then
	\begin{eqnarray*}
		\D^+(f\cdot g)(x) 	& = & \frac{f(x+\varepsilon)g(x+\varepsilon) - f(x)g(x)}{\varepsilon} \\
		& = & f(x+\varepsilon)\D^+ g(x) + g(x)\D^+ f(x) \\
		& = & f(x)\D^+ g(x) + g(x+\varepsilon)\D^+ f(x)
	\end{eqnarray*}
	and
	\begin{eqnarray*}
		\D^-(f\cdot g)(x) 	& = & \frac{f(x)g(x) - f(x-\varepsilon)g(x-\varepsilon)}{\varepsilon} \\
		& = & f(x)\D^- g(x) + g(x-\varepsilon)\D^- f(x) \\
		& = & f(x-\varepsilon)\D^- g(x) + g(x)\D^- f(x).
	\end{eqnarray*}
\end{proposition}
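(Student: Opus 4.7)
The plan is to verify each identity by direct algebraic manipulation, since these are elementary discrete analogues of the Leibniz rule. The first equality in each block is immediate from the definition of the finite difference operator applied to the pointwise product: for $\D^+$ it is simply
\[
\D^+(fg)(x) = \frac{(fg)(x+\varepsilon) - (fg)(x)}{\varepsilon} = \frac{f(x+\varepsilon)g(x+\varepsilon) - f(x)g(x)}{\varepsilon},
\]
with the analogous formula for $\D^-$ obtained by shifting.

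For the second and third equalities in the forward case, I would use the standard telescoping trick: to derive the second identity, insert $\pm f(x+\varepsilon)g(x)$ in the numerator to obtain
\[
\frac{f(x+\varepsilon)\bigl(g(x+\varepsilon)-g(x)\bigr) + \bigl(f(x+\varepsilon)-f(x)\bigr)g(x)}{\varepsilon} = f(x+\varepsilon)\D^+ g(x) + g(x)\D^+ f(x),
\]
and to derive the third identity, insert $\pm f(x)g(x+\varepsilon)$ instead, which pulls out $f(x)\D^+ g(x) + g(x+\varepsilon)\D^+ f(x)$. The symmetry between the two forms reflects the freedom to place the shift on either factor; both reduce to the classical Leibniz rule in the limit $\varepsilon \to 0$, which is the content of Theorem \ref{teorema equivalenza derivate} applied to $S^1$ functions.

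For the backward case I would either repeat the same telescoping with $\pm f(x-\varepsilon)g(x)$ and $\pm f(x)g(x-\varepsilon)$, or observe the identity $\D^- h(x) = \D^+ h(x-\varepsilon)$ and apply the forward identities at the shifted point $x-\varepsilon$. Both routes are equally short.

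There is no real obstacle here: the statement is a purely algebraic identity at the level of internal grid functions, with no transfer, no equivalence relation, and no duality involved. The only thing worth flagging is that, unlike the classical product rule, the discrete rule forces one of the two factors to be evaluated at a shifted argument, which is precisely what allows the operators $\D^\pm$ to satisfy a genuine chain rule even after passage to the quotient $\B/\equiv$ — this feature is what Section \ref{section schwartz} exploits to circumvent the Schwartz impossibility theorem.
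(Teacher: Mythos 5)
Your proof is correct and matches the paper's approach: the paper simply remarks that these identities ``can be established by a simple calculation,'' and your telescoping argument (inserting $\pm f(x+\varepsilon)g(x)$, respectively $\pm f(x)g(x+\varepsilon)$, and the analogous shifts for $\D^-$) is exactly that calculation carried out explicitly.
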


\begin{example}[Derivative of the sign function and the product rule]
	For an in-depth discussion of this example and of the limitations in the definition of a product rule for the distributional derivative, we refer to \cite{tartar}.
	Consider the following representative of the sign function
	$$
	f(x) = \left\{ \begin{array}{rl}
	-1 & \text{if } x<0\\
	1 & \text{if } x \geq 0.
	\end{array}	
	\right.
	$$
	For this function $f$, $f^2 = 1$ and $f^3 = f$, but the distributional derivative $f_x = 2\delta_{0}$ is different from $(f^3)_x = 3 f^2 f_x = 3 f_x= 6 \delta_0$.
	So, even if $f^2$ is smooth, the product rule does not hold.
	
	If we regard $f$ as a grid function, however, the boundedness of $f$ ensures that $f \in \test'(\Lambda)$, and with a simple calculation we obtain:
	\begin{equation}\label{direct calculation}
	\D f(x) = \left\{ \begin{array}{rl}
	2 \varepsilon^{-1} & \text{if } x=-\varepsilon\\
	0 & \text{otherwise}.
	\end{array}	
	\right.
	\end{equation}
	Notice also that $[\D f] = 2 \delta_{0}$, as we expected from Theorem \ref{teorema equivalenza derivate2}.
	Applying one of the chain rule formulas of Lemma \ref{chain} and taking into account that $f^2(x) = 1$ for all $x \in \Lambda$, we obtain
	\begin{eqnarray*}
		\D f^3(x)
		&=& f(x) \D f^2(x) + f^2(x+\varepsilon) \D f(x)\\
		&=& f(x) (f(x) \D f(x) + f(x+\varepsilon) \D f(x)) + \D f(x)\\
		&=& \D f(x)(2+ f(x) f(x+\varepsilon))
	\end{eqnarray*}
	so that
	$$
	\D f^3(x) = \left\{ \begin{array}{ll}
	\D f(-\varepsilon) = 2\varepsilon^{-1} & \text{if } x=-\varepsilon\\
	0 & \text{otherwise},
	\end{array}	
	\right.
	$$
	in agreement with \ref{direct calculation}.
\end{example}

We can summarize the results obtained so far as follows: the space of grid functions
\begin{itemize}
	\item is a vector space over $\hR$ that extends the space of distributions in the sense of Theorem \ref{bello};
	\item has a well-defined pointwise multiplication that extends the one defined for $S^0$ functions;
	\item has a derivative $\D$ that generalizes the distributional derivative and for which the discrete version of the chain rule established in Proposition \ref{chain} holds.
\end{itemize}
These properties are the nonstandard, discrete counterparts to the ones itemized in the impossibility theorem by Schwartz \cite{schwartz}.
As a consequence, the space of grid functions can be seen as a non-trivial generalization of the space of distributions, as we claimed at the beginning of this section.

We will complete our study of the relations between the space of grid functions and the space of distributions by showing that the space of distributions can be embedded, albeit in a non-canonical way, in the space of grid functions.
Notice that we cannot ask to this embedding to be fully coherent with derivatives: in fact, there is already an infinitesimal discrepancy between the usual derivative and the discrete derivative in the set of polynomials: the derivative of $x^2$ is $2x$, but $\D x^2 = 2x+\varepsilon$.
However, as shown in Theorem \ref{teorema equivalenza derivate}, for all $f \in C^n$, $D^n f = [\D^n (\ns{f}_{|\Lambda})]$.
In fact, the canonical linear embedding $l : C^0(\R) \hookrightarrow S^0(\Lambda)$ given by $l(f) = \ns{f}_{|\Lambda}$ does not preserve derivatives, but it has the weaker property
\begin{equation}\label{weak agreement}
l(f') \equiv \D (l(f)).
\end{equation}
This will be the weaker coherence request that we will impose on the embedding from the space of distributions to the space of grid functions.

\begin{theorem}\label{schwartz}
	Let $\{\psi_n\}_{n \in \N}$ be a partition of unity, and let $H$ be a Hamel basis for $\tests'(\R)$.
	There is a linear embedding $l : \tests'(\R) \rightarrow \test'(\Lambda)$, that depends on $\{\psi_n\}_{n \in \N}$ and $H$, that satisfies the following properties:
	\begin{enumerate}
		\item $\Phi \circ l = id$;
		\item the product over $\test'(\Lambda) \times \test'(\Lambda)$ generalizes the pointwise product over $C^0(\R) \times C^0(\R)$;
		\item the derivative $\D$ over $\test'(\Lambda)$ extends the distributional derivative in the sense of equation \ref{weak agreement};
		\item the chain rule for products holds in the form established in Lemma \ref{chain}.
	\end{enumerate}
\end{theorem}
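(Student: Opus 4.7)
My plan is to construct $l$ as a linear section of the projection $\Phi\circ\pi:\B\to\tests'(\R)$ whose restriction to $C^0(\R)$ agrees with the canonical embedding $f\mapsto \ns f_{|\Lambda}$. The Hamel basis is used to enforce $\R$-linearity, while the partition of unity is used to give a concrete recipe for a lift of an arbitrary distribution, based on the structure theorem for compactly supported distributions together with Corollary \ref{corollario teorema equivalenza} (which lifts continuous functions via $\ns{\cdot}_{|\Lambda}$) and Theorem \ref{teorema equivalenza derivate2} (which lifts distributional derivatives via $\D$).

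\textbf{Construction of a representative.} Given any $T\in\tests'(\R)$, write $T=\sum_n \psi_n T$; each $\psi_n T$ is compactly supported, so by the structure theorem for compactly supported distributions one has $\psi_n T = \sum_{|\alpha|\leq m_n}D^\alpha g_{n,\alpha}$ for continuous $g_{n,\alpha}$ with compact support. Define
\[
\tilde l(T) \;=\; \sum_n\sum_{|\alpha|\leq m_n} \D^\alpha\!\bigl(\ns{g_{n,\alpha}}_{|\Lambda}\bigr).
\]
Local finiteness of $\{\psi_n\}$, transferred to $\hR$ and combined with $|\tests'(\R)|$-saturation, means that at each $x\in\Lambda$ only hyperfinitely many summands are nonzero, so $\tilde l(T)$ is a well-defined internal grid function. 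Corollary \ref{corollario teorema equivalenza} gives $[\ns{g_{n,\alpha}}_{|\Lambda}]=g_{n,\alpha}$, and Theorem \ref{teorema equivalenza derivate2} then yields $[\tilde l(T)]=T$ (so in particular $\tilde l(T)\in\B$).

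\textbf{Enforcing linearity and verification.} The map $\tilde l$ is not linear, so I use the Hamel basis to fix this. Replace $H$, if necessary, by a Hamel basis that extends a fixed Hamel basis $B_0$ of $C^0(\R)$ inside $\tests'(\R)$. For $f\in B_0$ put $l(f):=\ns f_{|\Lambda}$, for $T\in H\setminus B_0$ put $l(T):=\tilde l(T)$, and extend $\R$-linearly to all of $\tests'(\R)$. Property~(1) is immediate, since $\Phi\circ l$ and $\mathrm{id}$ are both linear and agree on $H$. For~(2), any $f,g\in C^0(\R)$ expand in $B_0$ so $l(f)=\ns f_{|\Lambda}$ and $l(g)=\ns g_{|\Lambda}$, giving
\[
l(f)\cdot l(g) \;=\; \ns f_{|\Lambda}\cdot \ns g_{|\Lambda} \;=\; \ns{(fg)}_{|\Lambda} \;=\; l(fg).
\]
For~(3), Theorem \ref{teorema equivalenza derivate2} and property~(1) yield $[\D l(T)]=D[l(T)]=DT=[l(DT)]$, that is, $l(T')\equiv \D l(T)$, which is the weak agreement of equation~\eqref{weak agreement}. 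Property~(4) is just Proposition \ref{chain}, which holds for arbitrary grid functions.

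\textbf{Main obstacle.} The delicate step is the construction of $\tilde l(T)$: I must justify that the standard sum $\sum_n\sum_{|\alpha|\leq m_n}\D^\alpha(\ns{g_{n,\alpha}}_{|\Lambda})$ lifts to a legitimate internal object and lies in $\B$. The local finiteness of $\{\psi_n\}$ transfers to a hyperfinite condition on $\Lambda$ (for each $x\in\Lambda$ only boundedly many $\ns\psi_n$ are nonzero in an infinitesimal neighbourhood), so at each grid point the defining sum is hyperfinite; $|\tests'(\R)|$-saturation then yields an internal grid function $\tilde l(T)$ realising it. The grid distribution condition $\tilde l(T)\in\B$ follows because any $\varphi\in\test(\Lambda)$ has $\sh{\supp\varphi}\subset\subset\R$, so $\langle\tilde l(T),\varphi\rangle$ reduces to a finite sum of finite standard pieces, each bounded via Lemma \ref{equivalenza integrali} by the pairing of $\psi_n T$ with $\sh\varphi$.
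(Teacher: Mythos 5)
Your overall architecture --- decompose $T$ with the partition of unity and a structure theorem, lift the continuous pieces by restriction to $\Lambda$ and the derivatives by $\D$, extend over a Hamel basis by linearity, and then deduce (1)--(4) from Corollary \ref{corollario teorema equivalenza}, Theorem \ref{teorema equivalenza derivate2} and Proposition \ref{chain} --- is essentially the paper's. The gap is at the single step you yourself flag: the internality of $\tilde l(T)$. The series $\sum_{n\in\N}\sum_{|\alpha|\leq m_n}\D^\alpha(\ns{g_{n,\alpha}}_{|\Lambda})$ is an external sum, indexed by the standard naturals, of internal grid functions; even though at each grid point only finitely many terms are nonzero, the pointwise-defined map it determines is in general \emph{not} internal (already $\sum_{n\in\N}\chi_{[n,n+1/2]\cap\Lambda}$ has external support, by the usual overspill argument), so as written $\tilde l(T)$ is not an element of $\grid{\Lambda}$ and cannot be fed to $\Phi$. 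Your justification does not repair this: the phrase ``only hyperfinitely many summands are nonzero'' has no meaning for an $\N$-indexed sum, and speaking of $\ns{\psi}_n$ for infinite $n$ presupposes that the family $\{(\psi_n,g_{n,\alpha},m_n)\}_{n\in\N}$ has already been replaced by an internal sequence --- which is precisely the step that requires proof. This is where the paper's proof does its real work: it transfers the representation $T=\sum_n D^{a_n}f_n$ to an internal one $\ns{T}=\sum_{n\in\ns{\N}}\ns{D}^{b_n}g_n$, defines $l(T)$ as the \emph{internal} hyperfinite sum $\sum_{b_n\leq N}\D^{b_n}({g_n}_{|\Lambda})$, and must then show that the infinitely-indexed terms do not contribute to pairings with test functions --- the point of the polynomial-degree argument (near the finite points $g_n$ is a polynomial of degree at most $b_n-1$, annihilated by $\D^{b_n}$).

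Your route can be salvaged, but the internalization has to be written down: either transfer the whole decomposition as the paper does (your variant, with $g_{n,\alpha}$ continuous of compact support contained near $\supp\psi_n$, would then let the support condition replace the polynomial argument for infinite indices), or make the saturation argument explicit. For the latter, arrange the decomposition so that for each standard $m$ one has $\supp g_{n,\alpha}\cap[-m,m]=\emptyset$ for all $n$ beyond some $c(m)$; the sets $A_m$ of internal grid functions that agree with the finite sum $\sum_{n\leq c(m)}\sum_{|\alpha|\leq m_n}\D^\alpha(\ns{g_{n,\alpha}}_{|\Lambda})$ at every $x\in\Lambda$ with $|x|\leq m$ are internal and have the finite intersection property, so countable saturation yields an internal $f$ agreeing with your external sum at every finite grid point; since every $\varphi\in\test(\Lambda)$ has nearstandard support, this suffices for $f\in\test'(\Lambda)$ and $[f]=T$, the values of $f$ at infinite grid points being irrelevant. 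Granting that step, the rest of your argument goes through; in fact your arrangement $l_{|C^0(\R)}=\ns{(\cdot)}_{|\Lambda}$, obtained by extending a Hamel basis of $C^0(\R)$ to one of $\tests'(\R)$, yields a sharper, genuinely multiplicative form of property (2) than the paper's, which only invokes the algebra structure of $S^0$ via Lemma \ref{lemma piccolo}; and replacing the given basis $H$ by an adapted one is harmless, since the theorem only asserts the existence of an embedding depending on the chosen data.
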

\begin{proof}
	We will define $l$ over $H$ and extend it to all of $\tests'(\R)$ by linearity.
	Let $T \in H$.
	From the representation theorem of distributions (see for instance \cite{strichartz}), we obtain
	\begin{equation}\label{struttura distri standard}
	T = \sum_{n \in \N} T\psi_n = \sum_{n \in \N} D^{a_n} f_n
	\end{equation}
	with $f_n \in C^0(\R)$ and $\supp (D^{a_n} f_n) \subseteq \supp \psi_n$ for all $n \in \N$.
	Moreover, the sum is locally finite and for all $\varphi \in \tests(\Omega)$ there exists a finite set $I_{\varphi} \subset \N$ such that
	\begin{equation}\label{locally finite}
	\ldual T, \varphi \rdual
	=
	\ldual \sum_{i \in I_\varphi}D^{a_i} f_i, \varphi \rdual.
	\end{equation}
	
	Let $\{\phi_n\}_{n\in\ns{\N}}$ be the nonstandard extension of the sequence $\{\psi_n\}_{n \in \N}$, and let $\{b_n\}_{n \in \ns{\N}}$ be the nonstandard extension of the sequence $\{a_n\}_{n \in \N}$.
	By transfer, from the representation \ref{struttura distri standard} we obtain
	\begin{equation}\label{struttura distri}
	\ns{T} = \sum_{n \in \ns{\N}} \ns{T}\phi_n = \sum_{n \in \ns{\N}} \ns{D}^{b_n} g_n
	\end{equation}
	with $g_i \in \ns{C^0(\R)}$ and $\supp (D^{b_n} g_n) \subseteq \supp \psi_n$ for all $n \in \ns{\N}$.
	We may also assume that the representation \ref{struttura distri} has the following properties:
	\begin{enumerate}
		\item $	b_n = \min \left\{
		m \in \ns{\N} : \ns{T}\phi_n = \ns{D}^{m} f \text{ with } f \in \ns{C^0(\R)}
		\right\}$
		for all $n \in \ns{\N}$
		\item if $\ns{T}\phi_n = \ns{D}^{b_n} g = \ns{D}^{b_n} h$ with $g, h \in \ns{C^0(\R)}$, then $g-h$ is a polynomial of a degree not greater than $b_n-1$;
		\item if $n$ is finite and $\ns{T}\phi_n = \ns{D}^{b_n}g_n$, then $g_n = \ns{f}_n$ and $b_n = a_n$, where $f_n$ and $a_n$ satisfy $T\psi_n = D^{a_n}f_n$.
	\end{enumerate}
	For $T\in H$, we define
	$$
	l(T) = \sum_{n \in \ns{\N}:\ b_n \leq N} \D^{b_n} ({g_n}_{|\Lambda}),
	$$
	and we extend $l$ to $\tests'(\R)$ by linearity.
	Notice that $l$ does not depend on the choice of the functions $\{g_n\}_{n \in \ns{\N}}$.
	In fact, suppose that $\ns{T}\phi_n = \ns{D}^{b_n} g = \ns{D}^{b_n} h$ with $g, h \in \ns{C^0(\R)}$.
	By property (2) of the representation \ref{struttura distri}, $g-h$ is a polynomial of a degree not greater than $b_n-1$.
	Recall that, if $p \in \grid{\Lambda}$ is a polynomial of degree at most $b_n-1$, then $\D^{b_n} p = 0$.
	As a consequence, $\D^{b_n} ({g}_{|\Lambda}) = \D^{b_n}({h}_{|\Lambda})$, as we wanted.
	
	We will now show that, for all $T \in H$, $\ldual \Phi([l(T)]), \varphi \rdual = \ldual T, \varphi\rdual$ for all $\varphi \in \tests'(\R)$.
	This equality and linearity of $l$ entail that $\Phi \circ l = id$.
	Let $\varphi \in \tests(\R)$, and let $I_\varphi \subset \N$ a finite set such that equality \ref{locally finite} holds.
	We claim that whenever $i \not \in I_\varphi$, then	$\langle \D^{b_i} ({g_i}_{|\Lambda}), \ns{\varphi} \rangle = 0$.
	In fact, if $i \not \in I_\varphi$ is finite, then by formula \ref{locally finite} and by property (3) of the representation \ref{struttura distri} we have
	$$
	\sh{\langle \D^{b_i} ({g_i}_{|\Lambda}), \varphi} \rangle = \sh{\langle \D^{a_i} ({\ns{f}_i}_{|\Lambda}), \varphi} \rangle =  \ldual D^{a_i} f_i, \varphi \rdual = 0.
	$$
	
	We want to show that $\langle \D^{b_i} ({g_i}_{|\Lambda}), \ns{\varphi} \rangle = 0$ also when $i$ is infinite.
	Notice that if $x \in \fin$, then for sufficiently large $n \in \N$ it holds $x \not \in \supp \phi_n$: otherwise, we would also have $\sh{x} \in \supp \psi_n$ for arbitrarily large $n$, against the fact that for all $x \in \fin$, $\sh{x}\in\supp \phi_n$ only for finitely many $n$.
	As a consequence, $\supp \phi_i \cap \fin = \emptyset$, and by the inclusion $\supp (D^{b_i} g_i) \subseteq \supp \phi_i$, then also $\supp (D^{b_i} g_i) \cap \fin = \emptyset$.
	Taking into account property (2) of the representation \ref{struttura distri}, we deduce that the restriction of $g_i$ to $\hR \setminus \supp (D^{b_i} g_i)$ is a polynomial $p$ of degree at most $b_n -1$.
	We have already observed that $\D^{b_i} p = 0$ and, as a consequence, $\sh{\langle \D^{b_i} ({g_i}_{|\Lambda}), \varphi} \rangle = 0$.
	
	We then have the following equality:
	$$
	\langle l(T), \ns{\varphi} \rangle = \langle \sum_{i \in I_\varphi}\D^{a_i} ({\ns{f}_i}_{|\Lambda}), \ns{\varphi} \rangle.
	$$
	By Theorem \ref{teorema equivalenza derivate2}, we obtain
	$$
	\langle l(T), \ns{\varphi} \rangle =
	\langle \sum_{i \in I_\varphi}\D^{a_i} ({\ns{f}_i}_{|\Lambda}), \ns{\varphi} \rangle =
	\ldual \sum_{i \in I_\varphi}D^{a_i} f_i, \varphi \rdual =
	\ldual T, \varphi\rdual,
	$$
	that is sufficient to conclude that $\Phi([l(T)]) = T$.
	
	Assertion (2) is a consequence of Lemma \ref{lemma piccolo}, assertion (3) is a consequence of Theorem \ref{teorema equivalenza derivate2}, and assertion (4) is a consequence of Proposition \ref{chain}.
\end{proof}

\section{Grid functions as $\ns{L}^p$ functions and as parametrized measures}\label{sez young}

The main goal of this section is to show that there is an external $\fin$-submodule of the space of grid functions whose elements correspond to Young measures, and that this correspondence is coherent with the projection $\Phi$ defined in Theorem \ref{bello}.
Moreover, we will show how this correspondence can be generalized to arbitrary grid functions.
Before we prove these results, we find it useful to discuss some properties of grid functions when they are interpreted as $\ns{L^p}$ functions.
These properties are interesting on their own, and will also be used also in Section \ref{solutions}, when we will discuss the grid function formulation of partial differential equations.

Recall that for all $1 \leq p \leq \infty$, a function $g \in L^p(\Omega)$ induces a distribution $T_g \in \tests'(\Omega)$ defined by
$$
\ldual T_g, \varphi \rdual = \int_{\Omega} g \varphi dx 
$$
for all $\varphi \in \tests(\Omega)$.
As a consequence, by identifying $g$ with $T_g$ we have the inclusions $L^p(\Omega) \subset \tests'(\Omega)$ for all $1 \leq p \leq \infty$.
Since $\Phi$ is surjective, we expect that for all $g \in L^p(\Omega)$ there exists $f \in \B$ satisfying $[f] = T_g$.
In this case, we will often write $[f] = g$ and $[f] \in L^p(\Omega)$.
If $[f] \in L^p(\Omega)$, thanks to the Riesz representation theorem, we can think of $[f]$ either as a functional acting on $L^{p'}(\Omega)$, or as a member of an equivalence class of $L^p(\Omega)$ functions.
To our purposes, we find it more convenient to treat $[f]$ as a function.
With this interpretation, if $g = [f]$ and $g\in L^p(\Omega)$, then it holds the equality $g(x) = [f](x)$ for almost every $x \in \Omega$. 

\subsection{Grid functions as $\ns{L}^p$ functions}\label{sub lp}

We can identify every grid function with a piecewise constant function defined on all of $\hR^k$.
Among many different identifications, we choose the following: if $f \in \grid{\dom}$, then $\lp{f}$ is defined by
$$
\lp{f}(x) = \left\{
\begin{array}{ll}
f((n_1,n_2,\ldots,n_k)\varepsilon) & \text{if } n_i \varepsilon \leq x_i < (n_i+1)\varepsilon \text{ for all } 1 \leq i \leq k\\
0 & \text{if } |x_i| > N \text{ for some } 	1 \leq i \leq k,
\end{array}
\right.
$$
with the agreement that $f((n_1,n_2,\ldots,n_k)\varepsilon) = 0$ if $(n_1,n_2,\ldots,n_k)\varepsilon \not \in \dom$.

If $f$ is a grid function, the function $\lp{f}$ is an internal $\ns{}$simple function and, as such, it belongs to $\ns{L^p}(\R^k)$ for all $1 \leq p \leq \infty$.
The integral of $\lp{f}$ is related with the grid integral of $f$ by the following formula:
$$
\ns{\int}_{\hR^k} \lp{f} dx = \int_{\dom} f(x) d\Lambda^k = \varepsilon^k \sum_{x \in \dom} f(x).
$$
As a consequence, the $\ns{L^p}$ norm of $\lp{f}$ can be expressed by
$$
\norm{\lp{f} }_p^p = \varepsilon^k \sum_{x \in \dom} |f(x)|^p \text{ if } 1 \leq p < \infty, \text{ and } \norm{\lp{f}}_\infty = \max_{x \in \dom} |f(x)|.
$$

Notice that if $f \in \grid{\dom}$, then $\sh{\supp \lp{f}} \subseteq \sh{\supp \lp{\chi_{\dom}}} = \overline{\Omega}$.
If we define $\lp{\Omega} = \supp \lp{\chi_{\dom}}$, then from the above inclusion we can write $\lp{f} \in \ns{L}^p(\lp{\Omega})$ for all $1 \leq p \leq \infty$.
By identifying $f \in \grid{\dom}$ with $\lp{f}$, for all $1 \leq p \leq \infty$ the space 
of grid functions is identified with a subspace of $\ns{L^p(\lp{\Omega})}$ which is closed with respect to the $\ns{L}^p$ norm.
Since $\lp{\Omega}$ is $\ns{}$bounded in $\hR^k$, for $1 \leq p \leq \infty$ we have the usual relations between the $\ns{L^p}$ norms of $f \in \grid{\dom}$: if $1\leq p < q \leq +\infty$ and if $r$ satisfies the equality $1/p = 1/q + 1/r$, then
$\norm{\lp{f}}_p \leq \ns{\leb(\lp{\Omega})}^r\norm{\lp{f}}_q$.

From now on, when there is no risk of confusion, we will often abuse the notation and write $f$ instead of $\lp{f}$.

We begin our study of grid functions as $\ns{L^p}$ functions by showing that if a grid function $f$ has finite $\ns{L}^p$ norm for some $1\leq p\leq \infty$, then $f\in\B$ and, as a consequence, $[f]$ is a well-defined distribution.

\begin{lemma}\label{lemma sufficiente}
	If $\norm{f}_p \in \fin$ for some $1 \leq p \leq \infty$, then $f \in \test'(\dom)$.
\end{lemma}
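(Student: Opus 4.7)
The plan is to estimate $\langle f,\varphi\rangle$ via the transferred Hölder inequality, which reduces the claim to checking that every test function has finite $\ns{L^{p'}}$ norm.

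First, under the identification of a grid function with its piecewise constant extension $\lp{\,\cdot\,}$, one has
$$
\langle f,\varphi\rangle
= \varepsilon^k\!\sum_{x\in\Lambda^k} f(x)\varphi(x)
= \ns{\int}_{\hR^k}\lp{f}(x)\lp{\varphi}(x)\,dx.
$$
Transferring the standard Hölder inequality to $\ns{L^p}(\hR^k)$ gives
$$
|\langle f,\varphi\rangle|\;\leq\;\norm{\lp{f}}_{p}\cdot\norm{\lp{\varphi}}_{p'},
$$
where $p'$ is the conjugate exponent (with $1'=\infty$ and $\infty'=1$). Since $\norm{\lp{f}}_{p}\in\fin$ by hypothesis, it suffices to show that $\norm{\lp{\varphi}}_{p'}\in\fin$ for every $\varphi\in\test(\dom)$.

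For the second step, fix $\varphi\in\test(\dom)$. By definition $\varphi\in S^{\infty}(\dom)\subseteq S^0(\dom)$ and $\sh{\supp\varphi}\subset\subset\Omega$, so by Theorem \ref{teorema isomorfismo} the standard part $\sh{\varphi}$ is a continuous function on $\overline{\Omega}$ with compact support; in particular $\sh{\varphi}\in L^{p'}(\Omega)$ with $\norm{\sh{\varphi}}_{p'}<\infty$. When $1\leq p'<\infty$, pick a compact set $A\subset\Omega$ containing an open neighbourhood of $\sh{\supp\varphi}$; then $|\lp{\varphi}|^{p'}$ is supported in $A_\Lambda$ and its restriction is $S$-continuous, so Lemma \ref{equivalenza integrali} yields
$$
\norm{\lp{\varphi}}_{p'}^{p'}
= \int_{A_\Lambda}|\varphi|^{p'}\,d\Lambda^k
\sim\int_{A}|\sh{\varphi}|^{p'}dx
=\norm{\sh{\varphi}}_{p'}^{p'},
$$
which is finite. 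The remaining case $p'=\infty$ is even more direct: $S$-continuity of $\varphi$ on the compact set $\sh{\supp\varphi}\subset\subset\Omega$ gives $\norm{\lp{\varphi}}_{\infty}=\max_{x\in\dom}|\varphi(x)|\in\fin$.

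Combining the two bounds, $|\langle f,\varphi\rangle|\leq\norm{\lp{f}}_p\cdot\norm{\lp{\varphi}}_{p'}$ is the product of two finite hyperreals, hence finite, for every $\varphi\in\test(\dom)$. By Definition \ref{def bounded grid functions} this is exactly the statement $f\in\test'(\dom)$. There is no substantive obstacle here; the only mild care is in handling the endpoint exponents uniformly, which is dispatched by the two sub-cases above.
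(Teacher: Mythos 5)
Your proof is correct and follows essentially the same route as the paper: the paper's argument is exactly the discrete (transferred) H\"older inequality $|\langle f,\varphi\rangle|\leq\norm{f}_p\norm{\varphi}_{p'}$ combined with the observation that every $\varphi\in\test(\dom)$ has finite $\ns{L^{p'}}$ norm. The only difference is that you spell out, via S-continuity, compact support and Lemma \ref{equivalenza integrali}, the finiteness of $\norm{\varphi}_{p'}$, which the paper simply asserts.
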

\begin{proof}
	Notice that $\test(\dom)\subset \ns{L^p}(\lp{\Omega})$ for all $1 \leq p \leq \infty$ and, for any $\varphi \in \test(\dom)$, $\norm{\varphi}_p \in \fin$ for all $1 \leq p \leq \infty$.
	By the discrete H\"older's inequality
	$$
	|\langle f, \varphi \rangle| \leq \norm{f\varphi}_1 \leq \norm{f}_p \norm{\varphi}_{p'}
	$$
	so that if $\norm{f}_p \in \fin$, then $\langle f, \varphi \rangle \in \fin$ for all $\varphi \in \test(\dom)$, as desired.
\end{proof}

From the previous Lemma we deduce that, if the $L^p$ norm of the difference of two grid functions $f$ and $g$ is infinitesimal, then $f \equiv g$.

\begin{corollary}\label{corollario lp}
	Let $f, g \in \grid{\dom}$.
	If $\norm{f-g}_p \sim 0$ for some $1 \leq p \leq \infty$, then $f \equiv g$.
\end{corollary}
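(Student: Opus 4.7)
The plan is to mimic the proof of Lemma \ref{lemma sufficiente} and estimate the duality pairing with a test function via the discrete Hölder inequality. By the definition of $\equiv$, I need to show that for every $\varphi \in \test(\dom)$ one has $\langle f, \varphi\rangle \sim \langle g, \varphi\rangle$, or equivalently, by linearity of the hyperfinite sum, that $\langle f-g, \varphi\rangle \sim 0$.

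First I would fix $\varphi \in \test(\dom)$ and recall that, by the definition of the test algebra, $\varphi \in S^\infty(\dom)$ and $\sh{\supp\varphi}\subset\subset \Omega$. Reinterpreting $\varphi$ as $\lp{\varphi}\in\ns{L^{p'}}(\lp{\Omega})$, this means $\lp{\varphi}$ is $\ns{}$bounded by a finite constant and supported in a set whose Loeb measure is finite (the standard part of its support is compactly contained in $\Omega$). Hence $\norm{\varphi}_{p'}$ is finite for every $1 \leq p'\leq \infty$.

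Next I would apply the discrete Hölder inequality exactly as in Lemma \ref{lemma sufficiente}, namely
\begin{equation*}
|\langle f-g, \varphi\rangle| \;\leq\; \norm{(f-g)\varphi}_1 \;\leq\; \norm{f-g}_p\,\norm{\varphi}_{p'}.
\end{equation*}
The right-hand side is the product of an infinitesimal (by hypothesis) and a finite hyperreal (by the previous step), hence infinitesimal. Since $\varphi$ was arbitrary, this gives $f\equiv g$.

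I do not anticipate any real obstacle: the only point deserving care is the bound $\norm{\varphi}_{p'}\in\fin$, which relies on unpacking the definition of $\test(\dom)$ (S-boundedness together with the compactness of $\sh{\supp \varphi}$ inside $\Omega$). The case $p=\infty$ and the extreme case $p=1$ are handled uniformly by the same Hölder estimate, using the conventions $1'=\infty$ and $\infty'=1$ already introduced in Section \ref{prelim}.
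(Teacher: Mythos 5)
Your proposal is correct and follows essentially the same route as the paper: the paper's proof also applies the discrete H\"older inequality $|\langle f-g,\varphi\rangle|\leq\norm{f-g}_p\norm{\varphi}_{p'}$ for each $\varphi\in\test(\dom)$, using the finiteness of $\norm{\varphi}_{p'}$ already noted in Lemma \ref{lemma sufficiente}, and concludes $f\equiv g$. Your extra remarks justifying $\norm{\varphi}_{p'}\in\fin$ from S-boundedness and the compactness of $\sh{\supp\varphi}$ are accurate and merely make explicit what the paper leaves implicit.
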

\begin{proof}
	If $\norm{f-g}_p \sim 0$, then by Lemma \ref{lemma sufficiente}
	$$
	\langle f-g, \varphi \rangle \leq \norm{f-g}_p \norm{\varphi}_{p'} \sim 0
	$$
	for all $\varphi \in \test(\dom)$.
	As a consequence, $f \equiv g$.
\end{proof}

Notice that the other implication does not hold, in general.
As an example, consider the grid function
$f(n\varepsilon) = (-1)^n$.
Since $\langle f, \varphi \rangle \sim 0$ for all $\varphi \in \test(\dom)$, we deduce that $[f] = 0$, but $\norm{f}_{p} = 1$ for all $1 \leq p \leq \infty$.
Notice also that $\norm{f}_p$ is finite, but $\norm{\lp{f}-\ns{g}}_p \not \sim 0$ for all $g \in L^p(\Omega)$ and for all $1 \leq p \leq \infty$.

In the next section, we will show that the hypothesis $\norm{f}_\infty \in \fin$ is sufficient to ensure that $[f]\in L^\infty(\Omega)$.
If $1 \leq p < \infty$, however, the hypothesis $\norm{f}_p \in \fin$ is not sufficient to imply that $[f] \in L^p(\Omega)$.
An example is given by $N\chi_0 \in\grid{\Lambda}$, a representative of the Dirac distribution centred at $0$.
It can be calculated that $\norm{N\chi_0}_1 = \varepsilon N = 1,$
but $[N\chi_0] = \delta_0 \not\in L^p(\R)$ for any $p$.
In general, whenever $[f]\in L^p(\Omega)$, it holds the inequality $\norm{f}_p \geq \norm{[f]}_p$.

\begin{proposition}\label{norm inequality}
	For all $f \in \grid{\dom}$ and for all $1 \leq p \leq \infty$, if $[f] \in L^p(\Omega)$, then
	\begin{enumerate}
		\item if $[|f|] \in L^p(\Omega)$, then $[|f|]\geq|[f]|$ a.e.\ in $\Omega$;
		\item $\sh{\norm{f}_p} \geq \norm{[f]}_p$.
	\end{enumerate}
\end{proposition}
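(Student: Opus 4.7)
The plan is to prove both assertions via duality against nonnegative standard test functions, using discrete analogues of the triangle and Hölder inequalities in the hyperfinite pairing $\langle\cdot,\cdot\rangle$, together with the hypothesis that $[f]$ (and, for (1), $[|f|]$) can be identified with an honest $L^p$-function on $\Omega$.

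For part (1), I would fix an arbitrary $\varphi\in\tests(\Omega)$ with $\varphi\geq 0$ and apply the pointwise inequality $|f(x)\,\ns{\varphi}(x)|\leq|f(x)|\,\ns{\varphi}(x)$ together with the triangle inequality in the hyperfinite sum to obtain $|\langle f,\ns{\varphi}\rangle|\leq\langle|f|,\ns{\varphi}\rangle$. Taking standard parts and using the $L^p$-identifications gives
$$\Bigl|\int_\Omega [f]\,\varphi\,dx\Bigr|\leq\int_\Omega [|f|]\,\varphi\,dx$$
for every nonnegative $\varphi\in\tests(\Omega)$, i.e.\ $\int_\Omega([|f|]\mp[f])\,\varphi\,dx\geq 0$ for all such $\varphi$. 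The standard fundamental lemma of the calculus of variations (applied to the locally integrable function $[|f|]\mp[f]$) then yields $[|f|]\geq\pm[f]$ almost everywhere, hence $[|f|]\geq|[f]|$ a.e.\ in $\Omega$.

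For part (2), I would use the duality characterization
$$\norm{g}_p=\sup\bigl\{\,|\textstyle\int_\Omega g\,\varphi\,dx|\,:\,\varphi\in\tests(\Omega),\ \norm{\varphi}_{p'}\leq 1\bigr\},$$
valid for every $1\leq p\leq\infty$; for $1\leq p'<\infty$ this is standard by density of $\tests(\Omega)$ in $L^{p'}(\Omega)$, while for $p=1$ (so $p'=\infty$) one approximates $\operatorname{sign}([f])$ pointwise by a uniformly bounded sequence of test functions and invokes dominated convergence. For any such $\varphi$, the discrete Hölder inequality from the proof of Lemma \ref{lemma sufficiente} gives $|\langle f,\ns{\varphi}\rangle|\leq\norm{f}_p\,\norm{\ns{\varphi}}_{p'}$. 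By Lemma \ref{equivalenza integrali} applied to the continuous compactly supported function $|\varphi|^{p'}$ for $p'<\infty$ (and trivially for $p'=\infty$), one has $\sh{\norm{\ns{\varphi}}_{p'}}=\norm{\varphi}_{p'}\leq 1$. Taking standard parts and then the supremum over admissible $\varphi$ yields $\norm{[f]}_p\leq\sh{\norm{f}_p}$.

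Nothing in this argument is deep, but the only step that requires genuine care is the $p=1$ case of the duality characterization, since $\tests(\Omega)$ is not norm-dense in $L^\infty(\Omega)$. The bounded pointwise approximation of $\operatorname{sign}([f])$, combined with dominated convergence, is the main technical point; everything else is a direct transfer of the triangle and Hölder inequalities from the discrete setting to the standard pairing.
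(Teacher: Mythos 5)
Your proof is correct, but it follows a genuinely different route from the paper's. For part (1) you argue by duality: from $|\langle f,\ns{\varphi}\rangle|\leq\langle|f|,\ns{\varphi}\rangle$ for nonnegative $\varphi$, taking standard parts and invoking the fundamental lemma of the calculus of variations for the locally integrable functions $[|f|]\mp[f]$; the paper instead decomposes $f=f^++f^-$ and uses linearity of $\Phi$ together with sign considerations. For part (2) the divergence is more substantial: the paper never uses the $L^{p}$--$L^{p'}$ duality characterization of $\norm{[f]}_p$, but runs a case analysis (whether $|f|^p$, $f^\pm$ lie in $\B$, whether $[|f|]\in L^p(\Omega)$, and $p=\infty$ separately), reducing one case to part (1) and handling the case $[|f|]\notin L^p(\Omega)$ by splitting $\Omega$ along the support of $[f^+]-g^+$. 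Your argument treats all $1\leq p\leq\infty$ uniformly: discrete H\"older gives $|\langle f,\ns{\varphi}\rangle|\leq\norm{f}_p\norm{\ns{\varphi}}_{p'}$, Lemma \ref{equivalenza integrali} identifies $\sh{\norm{\ns{\varphi}}_{p'}}$ with $\norm{\varphi}_{p'}$, and the supremum over test functions in the unit ball of $L^{p'}(\Omega)$ recovers $\norm{[f]}_p$. What your approach buys is a shorter, case-free proof that stays close to machinery already established (Lemma \ref{lemma sufficiente} and Lemma \ref{equivalenza integrali}) and does not need the auxiliary hypothesis $[|f|]\in L^p(\Omega)$ anywhere in part (2); what it costs is the standard-analysis input that $\tests(\Omega)$ computes the $L^p$ norm by duality, whose only delicate instance is $p=1$ (where $\tests(\Omega)$ is not dense in $L^\infty(\Omega)$), and which you correctly repair by a uniformly bounded a.e.\ approximation of $\operatorname{sign}([f])$ plus dominated convergence. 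The paper's proof, by contrast, remains entirely internal to the grid-function formalism at the price of a longer and less transparent case analysis. Both arguments are sound; do make explicit in your write-up that the finiteness of $\langle|f|,\ns{\varphi}\rangle$ used in part (1) is guaranteed by the standing hypothesis that $[|f|]$ is defined, and that when $\norm{f}_p$ is infinite inequality (2) is trivially true, so taking standard parts in the H\"older bound is legitimate.
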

\begin{proof}
	Define $f^+(x) = \max\{f(x),0\}$ and $f^-(x) = \min\{f(x),0\}$, so that $f = f^+ + f^-$ and $|f|^p = |f^+|^p + |f^-|^p$ for all $1 \leq p < \infty$.
	If $[|f|]\in L^p(\Omega)$, then $[f^+]$ and $[f^-] \in L^p(\Omega)$ and, by linearity of $\Phi$,
	$$
	[|f|](x) = [f^+](x)-[f^-](x) \geq [f^+](x)+[f^-](x) = [f](x)
	$$
	for a.e.\ $x \in\Omega$.
	
	Let $f\in\grid{\dom}$ and suppose that $[f]\in L^p(\Omega)$ with $p < \infty$.
	If either $|f^+| \not \in \B$, $|f^+|^p \not \in \B$, $|f^-| \not \in \B$ or $|f^-| \not \in \B$ 
	then by Lemma \ref{lemma sufficiente} we would have $|f|^p \not \in \B$ and, as a consequence,
	$$\norm{f}_p^p = \norm{|f|^p}_1 \not \in \fin,$$
	so that inequality (2) would hold.
	Suppose then that $|f^+| \in \B$, $|f^+|^p \in \B$, $|f^-| \in \B$ and $|f^-|^p \in \B$.
	As a consequence, both $|f| \in \B$ and $|f|^p \in \B$.
	If $[|f|]\in L^p(\Omega)$, then (2) is a consequence of (1).
	The only case left is $[|f|]\not\in L^p(\Omega)$.
	
	For a matter of commodity, let $g = [f]$, and let $g^+(x) = \max\{g(x),0\}$ and $g^-(x) = \min\{g(x),0\}$.
	Since
	$
	[f^+]+[f^-] = [f] = g^++g^- \text{ in } \tests'(\Omega),
	$
	we deduce that
	$
	[f^+]-g^+ = -([f^-]-g^-).
	$
	The hypothesis $[f^+]\not \in L^p(\Omega)$ entails that also $[f^+]-g^+ \not \in L^p(\Omega)$.
	Let $K = \supp([f^+]-g^+)$: then for all $\varphi \in \tests(\Omega)$ with $\supp\varphi \subset K$ and with $\varphi(x) \geq 0$ for all $x \in \Omega$,
	$$
	0 \leq \ldual [f^+]-g^+, \varphi \rdual = \sh{\langle f^+, \ns{\varphi}\rangle} - \int_{\Omega} g^+ \varphi dx.
	$$
	Similarly,
	$$
	0 \leq -\ldual [f^-]-g^-, \varphi \rdual = \sh{\langle |f^-|, \ns{\varphi}\rangle} - \int_{\Omega} |g^-| \varphi dx.
	$$
	From the arbitrariness of $\varphi$, we deduce
	$ \norm{f\chi_{K_\Lambda}}_p \geq \norm{g\chi_K}_p$.
	Since $K = \supp([f^+]-g^+)$, we also have
	$$
	\norm{([f^+]-g^+)\chi_{\Omega\setminus K}}_p = \norm{([f^-]-g^-)\chi_{\Omega\setminus K}}_p = \norm{0}_p = 0,
	$$
	from which we conclude that (2) indeed holds.
	
	Suppose now that $[f]\in L^\infty(\Omega)$.
	If $\norm{f}_\infty \not\in\fin$, then inequality (2) holds.
	If $\norm{f}_\infty \in \fin$, let $c_f \in \grid{\dom}$ satisfy $c_f(x) = \norm{f}_\infty$ for all $x \in \dom$.
	Then $[c_f](x) = \sh{\norm{f}_\infty}$ for all $x \in \Omega$, so that $[c_f] \in L^\infty(\Omega)$.
	Since $c_f(x) \geq \max\{f^+(x), |f^-(x)|\}$ for all $x \in \dom$, then also $[c_f](x) \geq [f](x)$ for all $x \in \dom$.
	This is sufficient to conclude that inequality (2) holds.
\end{proof}

If $[f] \in L^p(\dom)$ and $\sh{\norm{f}_{p}} > \norm{[f]}_{p}$, then $f$ features some oscillations that are compensated by the linearity of $\Phi$.
In this case, we can interpret $f$ as the representative of a weak or (weak-$\star$ when $p = \infty$) limit of a sequence of functions whose $L^p$ norm is uniformly bounded by $\sh{\norm{f}_{p}}$.
In the next section, we will 
see how the behaviour of this weak-$\star$ limit can be described by a parametrized measure associated to $f$.

If $\norm{f}_p \not\in \fin$ but nevertheless $[f] \in L^p(\Omega)$, then $f$ also features concentrations that are compensated by the linearity of $\Phi$.
An example is given by the function $f = \D \chi_0 = N\chi_{-\varepsilon}-N\chi_0$.
The $\ns{L}^p$ norm of $f$ is $\norm{f}_p = 2N^{p-1/p}$ for $p \not = \infty$ and $N$ for $p = \infty$; however, from Theorem \ref{teorema equivalenza derivate2}, we deduce that $[f]=D[\chi_0]=0$.
In the next section, we will discuss how these concentrations affect the parametrized measure associated to $f$.

We will now address the coherence between the nonstandard extension of a $L^2$ function and its projection in the space of grid functions.
These technical results will be used in Section \ref{solutions}.

\begin{definition}
	Let $\lpi : \ns{L^2(\lp{\Omega})} \rightarrow \grid{\dom}$ be the $\ns{L^2}$ projection over the closed subspace $\grid{\dom}$.
	Recall that $\lpi(f)$ is the unique grid function satisfying
	$$
	\langle \lpi(f), g \rangle = \ns{\int}_{\lp{\Omega}} f(x) \lp{g}(x) dx
	$$
	for all $g \in \grid{\dom}$.
\end{definition}

\begin{lemma}\label{proiez s-c}
	For all $f \in C^0(\Omega)$,
	$\lpi(\ns{f}) \in S^0(\dom)$ and $\ns{f}(x) \sim \lpi(\ns{f})(x)$ for all $x \in \dom$.
\end{lemma}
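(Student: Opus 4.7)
The plan is first to derive an explicit formula for the projection $\lpi(\ns{f})(x)$ as a cell average of $\ns{f}$, and then invoke continuity of $f$. Testing the defining duality for $\lpi$ against $g = \chi_{\{x_0\}} \in \grid{\dom}$ for a single grid point $x_0$, the hat construction gives that $\lp{\chi_{\{x_0\}}}$ is the characteristic function of the cell $C_{x_0} = \prod_{i=1}^{k}\bigl[(x_0)_i,\,(x_0)_i + \varepsilon\bigr)$. Since
$$
\langle \lpi(\ns{f}), \chi_{\{x_0\}}\rangle = \varepsilon^k\, \lpi(\ns{f})(x_0) \quad\text{and}\quad \ns{\int}_{\lp{\Omega}} \ns{f}\cdot\lp{\chi_{\{x_0\}}}\,dx = \ns{\int}_{C_{x_0}} \ns{f}(y)\,dy,
$$
this yields the explicit representation
$$
\lpi(\ns{f})(x_0) = \frac{1}{\varepsilon^k}\,\ns{\int}_{C_{x_0}} \ns{f}(y)\,dy,
$$
i.e., the projection at a grid point is the average of $\ns{f}$ over an infinitesimal cell anchored at that point. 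The family $\{\chi_{\{x\}}\}_{x\in\dom}$ spans $\grid{\dom}$, so this formula determines $\lpi(\ns{f})$ pointwise.

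Next, I would deduce the approximation $\ns{f}(x) \sim \lpi(\ns{f})(x)$ from continuity of $f$. For $x \in \dom$ with $\sh{x} \in \Omega$, fix an arbitrary standard $\eta > 0$; openness of $\Omega$ together with continuity of $f$ at $\sh{x}$ provides a standard $\delta > 0$ such that $B(\sh{x},\delta)\subset \Omega$ and $|f(y) - f(\sh{x})| < \eta$ throughout $B(\sh{x},\delta)$. By transfer, $|\ns{f}(z) - f(\sh{x})| < \eta$ for every $z \in \ns{B(\sh{x},\delta)}$. Since $C_x$ has diameter at most $k\varepsilon$ and $x \sim \sh{x}$, the cell $C_x$ lies inside $\ns{B(\sh{x},\delta)}$, so
$$
\bigl|\lpi(\ns{f})(x) - \ns{f}(x)\bigr| \leq \frac{1}{\varepsilon^k}\ns{\int}_{C_x} \bigl|\ns{f}(y) - \ns{f}(x)\bigr|\,dy < 2\eta.
$$
Arbitrariness of $\eta$ gives the desired infinite proximity; the same estimate together with the boundedness of $f$ on compact sets of $\Omega$ shows that $\lpi(\ns{f})$ is finite at every nearstandard point. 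The statement for non-nearstandard $x$ is dealt with by overspill: since $|\ns{f}(y) - \ns{f}(x)| < \eta$ holds on $\ns{B(x,\delta)} \cap \ns{\Omega}$ for every internal $\eta, \delta$ satisfying the transferred continuity property, one can choose such a $\delta \gg k\varepsilon$.

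Finally, $S$-continuity follows from the triangle inequality: if $x \sim y$ are nearstandard in $\dom$, they share a standard part $p \in \Omega$, and the previous step (together with $S$-continuity of $\ns{f}$ itself, which is immediate from $f \in C^0(\Omega)$) gives
$$
\lpi(\ns{f})(x) \sim \ns{f}(x) \sim f(p) \sim \ns{f}(y) \sim \lpi(\ns{f})(y).
$$

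The main obstacle is Step 1, the reduction of the $\ns{L^2}$ projection to a concrete cell average: once the formula is in hand, the rest is just the well-known fact that continuous functions agree with their infinitesimal cell averages up to infinitesimals, combined with a routine triangle-inequality argument for $S$-continuity.
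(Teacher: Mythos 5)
Your core argument is the paper's own: the paper likewise tests the defining duality against $g=\varepsilon^{-k}\chi_{y}$ to obtain $\lpi(\ns{f})(y)=\varepsilon^{-k}\ns{\int}_{[y,y+\varepsilon]^k}\ns{f}(x)\,dx$, and then, instead of your explicit $\eta$--$\delta$/transfer estimate, squeezes this cell average between the $\min$ and $\max$ of $\ns{f}$ on the cell and invokes S-continuity of $\ns{f}$; at points $x$ nearstandard in $\Omega$ the two arguments are equivalent, and your triangle-inequality deduction of $S^0$ membership is the same conclusion.

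The one place you diverge is the attempt to cover non-nearstandard $x\in\dom$ by overspill, and that step does not work: transfer of continuity at such an $x$ yields, for each positive $\eta$, \emph{some} $\delta>0$, but nothing entitles you to ``choose $\delta\gg k\varepsilon$'' --- the modulus of continuity of $\ns{f}$ near $\partial\Omega$ can be far smaller than $\varepsilon$. Indeed the pointwise claim, read literally for all $x\in\dom$, can fail at points infinitely close to the boundary: take $k=1$, $\Omega=(0,1)$, $f(x)=\cos(\pi/x)$ and $x=\varepsilon$; then $\ns{f}(\varepsilon)=\cos(\pi N)=1$ (since $N=N_0!$ is even), while the substitution $u=\pi/t$ and an integration by parts give $\varepsilon^{-1}\ns{\int}_{\varepsilon}^{2\varepsilon}\cos(\pi/t)\,dt=O(1/N)\sim 0$, so $\lpi(\ns{f})(\varepsilon)\not\sim\ns{f}(\varepsilon)$. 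Note, however, that the paper's proof fares no better on this point: its appeal to S-continuity of $\ns{f}$ also covers only nearstandard points, which is all that is needed for $\lpi(\ns{f})\in S^0(\dom)$ and for the later uses of the lemma (e.g.\ $[\lpi(\ns{f})]=f$, which only involves test functions supported compactly in $\Omega$). So your proposal establishes exactly what the paper's proof establishes; you should simply drop the overspill step, or state explicitly that the approximation is proved at nearstandard points, rather than present it as covering all of $\dom$.
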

\begin{proof}
	Let $f \in C^0(\Omega)$.
	Since for all $g \in\grid{\dom}$ we have the equality
	$$
	\langle \lpi(\ns{f}), g\rangle
	=
	\ns{\int}_{\lp{\Omega}} \ns{f}(x) \lp{g}(x) dx,
	$$
	by choosing $g = \varepsilon^{-k} \chi_{y}$ , we obtain
	$$
	\lpi(\ns{f})(y)
	=
	\langle \lpi(\ns{f}), \lp{\varepsilon^{-k} \chi_{y}}\rangle
	=
	\varepsilon^{-k}\ns{\int}_{[y, y+\varepsilon]^k} \ns{f}(x) dx
	$$
	for all $y \in\dom$.
	Since
	$$
	\min_{x \in [y, y+\varepsilon]^k} \{\ns{f}(x)\}
	\leq
	\varepsilon^{-k}\ns{\int}_{[y, y+\varepsilon]^k} \ns{f}(x) dx
	\leq
	\max_{x \in [y, y+\varepsilon]^k} \{\ns{f}(x)\},$$
	by S-continuty of $\ns{f}$, we deduce the thesis.
\end{proof}

\begin{lemma}\label{questo corollario}
	For all $f \in L^2(\Omega)$, $[\lpi(\ns{f})] = f$.
\end{lemma}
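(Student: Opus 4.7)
The plan is to verify, for every test function $\varphi \in \tests(\Omega)$, the equality
$$\sh{\langle \lpi(\ns{f}), \ns{\varphi}_{|\Lambda}\rangle} = \int_{\Omega} f\varphi\, dx,$$
which by Theorem \ref{bello} is precisely the statement $[\lpi(\ns{f})] = f$. First, note that $\lpi$ is an $\ns{L^2}$-orthogonal projection, so $\norm{\lpi(\ns f)}_2 \leq \ns{\norm{f}_2}\in\fin$, which by Lemma \ref{lemma sufficiente} guarantees $\lpi(\ns f)\in\B$.

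I would then unfold the pairing via the defining property of $\lpi$:
$$\langle \lpi(\ns{f}), \ns{\varphi}_{|\Lambda}\rangle = \ns{\int}_{\lp{\Omega}} \ns{f}(x)\, \widehat{\ns{\varphi}_{|\Lambda}}(x)\, dx.$$
The goal is to replace the piecewise-constant interpolant $\widehat{\ns{\varphi}_{|\Lambda}}$ by $\ns{\varphi}$ itself at the cost of only an infinitesimal error. By Cauchy--Schwarz,
$$\left|\ns{\int}_{\lp{\Omega}}\ns{f}(x)\bigl(\widehat{\ns{\varphi}_{|\Lambda}}(x)-\ns{\varphi}(x)\bigr)dx\right| \leq \ns{\norm{f}_2}\cdot\ns{\norm{\widehat{\ns{\varphi}_{|\Lambda}}-\ns{\varphi}}_2}.$$
The first factor is finite. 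For the second, observe that $\varphi\in\tests(\Omega)$ is uniformly continuous with compact support $K$; hence by transfer, for every $x$ in (a bounded thickening of) $\ns{K}$ the value $\widehat{\ns{\varphi}_{|\Lambda}}(x)$ agrees with $\ns{\varphi}$ at a grid point within distance $\varepsilon$ of $x$, so $|\widehat{\ns{\varphi}_{|\Lambda}}(x)-\ns{\varphi}(x)|\sim 0$. Since both functions are supported on a $\ns{}$bounded set, the $\ns{L^2}$ norm of the difference is infinitesimal.

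Having replaced $\widehat{\ns{\varphi}_{|\Lambda}}$ with $\ns{\varphi}$, the remaining integral equals $\ns{\int}_{\ns{\Omega}}\ns{f}\ns{\varphi}\,dx$ (with the convention that $\ns f$ vanishes outside $\ns\Omega$), which by transfer has standard part $\int_{\Omega} f\varphi\,dx$. Combining these steps yields
$$\sh{\langle \lpi(\ns{f}), \ns{\varphi}_{|\Lambda}\rangle} = \int_{\Omega}f\varphi\,dx = \ldual T_f,\varphi\rdual,$$
for every $\varphi\in\tests(\Omega)$, hence $\Phi([\lpi(\ns f)])=T_f$ and $[\lpi(\ns f)]=f$.

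The main technical obstacle is controlling the $\ns{L^2}$ discrepancy between $\widehat{\ns{\varphi}_{|\Lambda}}$ and $\ns{\varphi}$: one must exploit both the uniform continuity and the compact support of $\varphi$ (not merely pointwise S-continuity as in Lemma \ref{proiez s-c}, since here $\ns{f}$ need not be S-bounded). An alternative route would be to approximate $f$ in $L^2$ by a sequence of continuous functions, apply Lemma \ref{proiez s-c} together with Corollary \ref{corollario teorema equivalenza} to handle each approximant, and then pass to the limit using $\norm{\lpi(\ns g)}_2\leq\ns{\norm g_2}$ and Corollary \ref{corollario lp}; this avoids the interpolant comparison but requires saturation to pick an infinite index.
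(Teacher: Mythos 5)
Your proposal is correct and follows essentially the same route as the paper: unfold $\langle \lpi(\ns{f}), \ns{\varphi}_{|\Lambda}\rangle$ via the defining property of $\lpi$, replace $\lp{\ns{\varphi}_{|\Lambda}}$ by $\ns{\varphi}$ using the S-continuity of $\ns{\varphi}$, and identify the resulting integral with $\int_\Omega f\varphi\,dx$ by transfer. Your Cauchy--Schwarz estimate merely spells out the step the paper compresses into ``by S-continuity of $\ns{\varphi}$'', so there is nothing genuinely different here.
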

\begin{proof}
	For all $\varphi \in \tests'(\Omega)$ we have
	$$
	\langle \lpi(\ns{f}), \ns{\varphi}_{|\Lambda} \rangle = \ns{}\int_{\lp{\Omega}} \ns{f} \lp{\ns{\varphi}_{|\Lambda}} dx
	$$
	and, by S-continuity of $\ns{\varphi}$,
	$$
	\ns{\int}_{\lp{\Omega}} \ns{f} \lp{\ns{\varphi}_{|\Lambda}} dx
	\sim
	\ns{\int}_{\ns{\Omega}} \ns{f} \ns{\varphi} dx
	=
	\int_{\Omega} f\varphi dx.
	$$
	This implies $[\lpi(\ns{f})] = f$.
\end{proof}

The above Lemma can be sharpened under the hypothesis that $\Omega$ has finite measure.

\begin{lemma}\label{norma r}
	Let $\mu_L(\Omega)<+\infty$.
	For all $f \in L^2(\Omega)$,
	$\norm{\ns{f}-\lpi(\ns{f})}_2 \sim 0$.
\end{lemma}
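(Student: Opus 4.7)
The strategy is an approximation argument built on the fact that $\lpi$, being an orthogonal projection onto a closed subspace, is a contraction: $\norm{\lpi(u)}_2 \leq \norm{u}_2$ for all $u \in \ns{L^2}(\lp{\Omega})$ by transfer. Fix an arbitrary standard $\eta > 0$. Since $C_c(\Omega)$ is dense in $L^2(\Omega)$, pick $g \in C_c(\Omega)$ with $\norm{f-g}_2 < \eta$. Using the triangle inequality on
$$\ns{f} - \lpi(\ns{f}) = (\ns{f} - \ns{g}) + (\ns{g} - \lpi(\ns{g})) + \lpi(\ns{g} - \ns{f}),$$
together with transfer ($\norm{\ns{f}-\ns{g}}_2 = \ns{\norm{f-g}_2} < \eta$) and the contraction property ($\norm{\lpi(\ns{g}-\ns{f})}_2 \leq \norm{\ns{g}-\ns{f}}_2 < \eta$), one obtains
$$\norm{\ns{f} - \lpi(\ns{f})}_2 \leq 2\eta + \norm{\ns{g} - \lpi(\ns{g})}_2.$$

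The heart of the proof is therefore showing $\norm{\ns{g} - \lpi(\ns{g})}_2 \sim 0$ whenever $g \in C_c(\Omega)$. Because $g$ has compact support inside $\Omega$ and is continuous, $g$ is uniformly continuous on $\R^k$ (extended by zero), so by transfer $\ns{g}$ is uniformly S-continuous; in particular $\ns{g}(x) \sim \ns{g}(y)$ whenever $|x-y|_\infty \leq \varepsilon$. Combining this with the cell-average representation of $\lpi$ used in the proof of Lemma \ref{proiez s-c}, namely
$$\lpi(\ns{g})(y) = \varepsilon^{-k} \ns{\int}_{[y,y+\varepsilon]^k} \ns{g}(x)\,dx,$$
one upgrades the pointwise conclusion of that lemma to a \emph{uniform} infinitesimal bound $\delta := \max_{y \in \dom} |\ns{g}(y) - \lpi(\ns{g})(y)| \sim 0$. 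Moreover, for sufficiently infinite $N$ the supports of $\ns{g}$ and $\lp{\lpi(\ns{g})}$ both lie in a common internal bounded set of standard measure at most $\mu_L(\Omega) + 1$ (this uses $d(\supp g,\partial\Omega)>0$ so that grid cells meeting $\supp \ns{g}$ stay inside $\lp{\Omega}$, and that $\ns{g}$ vanishes at points of $\hR^k \setminus \lp{\Omega}$ whose distance from $\supp \ns{g}$ exceeds $\varepsilon$). Together with uniform S-continuity of $\ns g$ across the jump between adjacent grid cells, this yields $\sup_{x \in \hR^k}|\ns{g}(x) - \lp{\lpi(\ns{g})}(x)| \sim 0$, and integrating over the bounded support,
$$\norm{\ns{g} - \lpi(\ns{g})}_2^2 \leq \delta'^2 \cdot \ns{\mu_L}(\text{support}) \sim 0,$$
for some infinitesimal $\delta'$.

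Putting everything together, $\norm{\ns{f} - \lpi(\ns{f})}_2 \leq 2\eta + (\text{infinitesimal})$ holds for every standard $\eta > 0$, and consequently $\norm{\ns{f}-\lpi(\ns{f})}_2$ is smaller than every standard positive real, hence infinitesimal. The only delicate step is the uniformity in the bound $\delta$ and the control of boundary cells; both are routine once one observes that $g \in C_c(\Omega)$ is uniformly continuous on all of $\R^k$ and that $\supp g$ sits at positive distance from $\partial\Omega$, so that the finite-measure hypothesis on $\Omega$ is used only to keep the total mass of the integration region finite.
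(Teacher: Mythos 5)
Your proof is correct, but it follows a genuinely different route from the paper's. You argue by density: approximate $f$ in $L^2(\Omega)$ by $g \in C_c(\Omega)$, use that $\lpi$ is a contraction (transfer of the standard fact for orthogonal projections), and reduce everything to showing $\norm{\ns{g}-\lpi(\ns{g})}_2 \sim 0$ for compactly supported continuous $g$, which you get by upgrading the pointwise statement of Lemma \ref{proiez s-c} to a uniform infinitesimal bound via the uniform continuity of $g$ (extended by zero to $\R^k$) and by confining the integration region to the cells meeting $\ns{\supp g}$, which sit inside a standard compact subset of $\Omega$ of finite measure. The paper instead works directly with $f$: it writes the Pythagorean identity $\norm{\ns{f}}_2^2 = \norm{\lpi(\ns{f})}_2^2 + \norm{r}_2^2$ for the residual $r = \ns{f}-\lpi(\ns{f})$, invokes a nonstandard Lusin-type argument to produce a $\ns{}$compact $K \subseteq \ns{\Omega}$ with $\ns{\leb}(\ns{\Omega}\setminus K)\sim 0$ and $\norm{r\chi_K}_2 \sim 0$, deduces $\norm{\ns{f}}_2 \sim \norm{\lpi(\ns{f})\chi_K}_2 \leq \norm{\lpi(\ns{f})}_2 \leq \norm{\ns{f}}_2$, and concludes from the Pythagorean identity. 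Your approach is more elementary (no Lusin/measure-theoretic machinery, only the contraction property plus density), and, as you observe, it uses the hypothesis $\leb(\Omega)<+\infty$ only marginally — the compact support of $g$ already keeps the relevant integration region of finite measure — whereas the paper's argument is shorter once Lusin's theorem is granted, since it avoids any approximation step and any boundary-cell bookkeeping. Both arguments rest on the same two structural facts (the cell-average representation of $\lpi$ and the good behaviour of the projection in $\ns{L^2}$ norm), so the proofs are compatible; yours could even be quoted to remove or weaken the finite-measure hypothesis in this lemma if that were ever needed.
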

\begin{proof}
	Let $f \in L^2(\Omega)$, and let $r = \ns{f}-\lpi(\ns{f})$.
	By the properties of the $\ns{L^2}$ projection, we have
	\begin{equation}\label{lpproj}
	\norm{\ns{f}}_2 = \norm{P(\ns{f})}_2 + \norm{r}_2.
	\end{equation}
	By the nonstandard Lusin's Theorem, there exists a $\ns{}$compact set $K\subseteq \ns{\Omega}$ that satisfies $\ns{\leb}(\ns{\Omega} \setminus K) \sim 0$ and $\norm{r \chi_{K}}_2 \sim 0$.
	Since $\ns{\leb}(\ns{\Omega} \setminus K) \sim 0$ and since $f \in L^2(\Omega)$, we have also $\norm{\ns{f} \chi_{K}}_2 \sim \norm{\ns{f}}_2$ and, as a consequence,
	$$
	\norm{\ns{f} }_2 \sim \norm{\ns{f} \chi_{K}}_2 = \norm{\lpi(\ns{f}) \chi_{K}}_2 + \norm{r\chi_K} \sim \norm{\lpi(\ns{f}) \chi_{K}}_2.
	$$
	From the inequality chain
	$$\norm{\ns{f} }_2\sim\norm{\lpi(\ns{f}) \chi_{K}}_2 \leq \norm{\lpi(\ns{f})}_2 \leq \norm{\ns{f}}_2$$
	we deduce that $\norm{\ns{f}}_2 \sim \norm{P(\ns{f})}_2$ that, by equality \ref{lpproj}, implies $\norm{\ns{f}-\lpi(\ns{f})}_2 \sim 0$, as we wanted.
\end{proof}

The previous Lemma suggests a definition of nearstandardness that will be useful in the sequel of the paper.

\begin{definition}
	Let $\mu_L(\Omega)<+\infty$.
	We will say that $f \in \grid{\dom}$ is nearstandard in $L^2(\Omega)$ iff there exists $g \in L^2(\Omega)$ such that $\norm{f-P(\ns{g})}_2 \sim 0$.
\end{definition}

Notice that, under the hypothesis that $\mu_L(\Omega)$ is finite, Corollary \ref{corollario lp} and Lemma \ref{norma r} entail that $f$ is nearstandard in $L^2(\Omega)$ if and only if $[f] \in L^2(\Omega)$ and $\norm{f-P(\ns{[f]})}_2 \sim 0$.

\subsection{An extension of the Robinson-Bernstein embedding}
We conclude the study of the properties of grid functions as $\ns{L^p}$ functions by discussing 
the generalization of an embedding due to Robinson and Bernstein
$$
L^2(\Omega) \subset V \subset \ns{L^2}(\Omega),
$$
where $V$ is a vector space of a hyperfinite dimension (for the details, we refer to \cite{invariant, da}).
In our case, by considering the embedding $l$ of the space of distributions to the space of grid functions defined in Theorem \ref{schwartz} and by modifying the extension of
$f$ to $\lp{f}$, we will obtain the inclusions
$$
L^p(\Omega) \subset \tests'(\Omega) \subset \grid{\dom} \subset \ns{L^p}(\Omega)
$$
for all $1\leq p \leq \infty$.

\begin{proposition}\label{proposition embedding}
	Let $l$ be defined as in the proof of Theorem \ref{schwartz}.
	There is an embedding
	$l': \grid{\dom} \rightarrow \bigcap_{1\leq p \leq \infty} \ns{L^p}(\Omega)$
	such that
	\begin{equation}\label{embedding equality}
	\ns{\int}_{\hR^k} (l'\circ l)(f) \ns{\varphi} dx \sim \int_{\R^k} f \varphi dx
	\end{equation}
	for all $1 \leq p \leq \infty$, for all $f \in L^p(\Omega)$ and for all $\varphi \in \tests(\Omega)$.
	As a consequence, if we identify $\tests'(\Omega)$ with $l(\tests'(\Omega)) \subseteq \grid{\dom}$ and $\grid{\dom}$ with $l'(\grid{\dom}) \subseteq \ns{L^p}(\Omega)$, we have the inclusions
	$$
	L^p(\Omega) \subset \tests'(\Omega) \subset \grid{\dom} \subset \ns{L^p}(\Omega)
	$$
	for all $1\leq p \leq \infty$.
\end{proposition}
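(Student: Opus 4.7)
The plan is to set $l'(g) = \lp{g}$, the piecewise constant extension introduced at the start of Section \ref{sub lp}. That $\lp{g} \in \bigcap_{1 \leq p \leq \infty}\ns{L^p}(\Omega)$ is immediate, since $\lp{g}$ is an internal simple function supported on the $\ns{}$bounded set $\lp{\Omega}$; injectivity of $l'$ is obvious. All the content of the proposition therefore lies in verifying the weak compatibility identity \eqref{embedding equality}.

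The key device for \eqref{embedding equality} is the averaged test function
$$\tilde\varphi(y) = \varepsilon^{-k}\,\ns{\int}_{[y,\,y+\varepsilon]^k}\ns{\varphi}(x)\,dx, \qquad y \in \dom,$$
associated with each $\varphi \in \tests(\Omega)$. A short change of variables shows $\D_i^+\tilde\varphi = \widetilde{\D_i^+\ns\varphi}$, that is, cell-averaging commutes with the forward finite difference; iterating this identity and invoking Theorem \ref{teorema equivalenza derivate} yields that every $\D^\alpha\tilde\varphi$ coincides with the cell-average of $\D^\alpha\ns\varphi$, and so is $S$-continuous. Together with $S$-continuity of $\ns\varphi$ itself (which gives $\tilde\varphi \equiv_S \ns\varphi$ and $\sh{\supp\tilde\varphi} \subset\subset \Omega$), this places $\tilde\varphi \in \test(\dom)$.

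Granted $\tilde\varphi \in \test(\dom)$, the rest falls out quickly. Unwinding the definition of $\lp{g}$ gives the exact identity
$$\ns{\int}_{\hR^k}\lp{g}(x)\,\ns\varphi(x)\,dx = \langle g, \tilde\varphi\rangle$$
for every $g \in \grid{\dom}$. When $g \in \test'(\dom)$, Corollary \ref{cor continuity} gives $\langle g,\tilde\varphi\rangle \sim \langle g,\ns\varphi\rangle$. Specialising to $g = l(T_f)$, where $T_f \in \tests'(\Omega)$ is the distribution associated with $f \in L^p(\Omega)$, the identity $\Phi \circ l = \mathrm{id}$ from Theorem \ref{schwartz} yields $\sh{\langle l(T_f), \ns\varphi\rangle} = \ldual T_f,\varphi\rdual = \int_\Omega f\varphi\,dx$, and \eqref{embedding equality} follows. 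The concluding chain of inclusions is then the reading of $f \mapsto T_f$, $T \mapsto l(T)$, $g \mapsto l'(g)$ as successive identifications whose distributional coherence is precisely the content of \eqref{embedding equality}.

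The main obstacle is the verification that $\tilde\varphi \in \test(\dom)$, specifically the inductive $S$-smoothness via $\D^\alpha\tilde\varphi = \widetilde{\D^\alpha\ns\varphi}$; once this is in hand, everything else reduces to a one-line unwrapping of a definition, a single invocation of Corollary \ref{cor continuity}, and the surjectivity property $\Phi \circ l = \mathrm{id}$.
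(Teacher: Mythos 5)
Your proposal is correct and follows essentially the same route as the paper: the paper also takes $l'$ to be the piecewise-constant extension (truncated as $\lp{f}\chi_{\ns{\Omega}}$ so that it genuinely lands in $\ns{L^p}(\Omega)$, a small cutoff you omit but which only matters on cells meeting $\partial\Omega$, away from $\supp\varphi$), and it likewise reduces \eqref{embedding equality} to $\langle l(g),\ns{\varphi}\rangle \sim \ldual g,\varphi\rdual$ via Theorem \ref{schwartz}. The only difference is one of detail: where the paper dispatches the step $\ns{\int}\lp{f}\,\ns{\varphi}\,dx \sim \langle f,\ns{\varphi}\rangle$ with a one-line appeal to S-continuity of $\ns{\varphi}$, you make it explicit through the cell-averaged test function $\tilde\varphi \in \test(\dom)$ and Corollary \ref{cor continuity}, which is exactly the argument hiding behind that line.
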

\begin{proof}
	Define $l'$ by $l'(f) = \lp{f}\chi_{\ns{\Omega}}$ for all $f \in \grid{\dom}$.
	Since $l'(f)$ is an internal $\ns{}$simple function, it belongs to $\ns{L^p}(\Omega)$ for all $1 \leq p \leq \infty$.
	We will now prove that, for this choice of $l'$, equality \ref{embedding equality} holds.
	
	Notice that for all $f \in \grid{\dom}$, if $l'(f)(x) \not = \lp{f}(x)$, then $x \in \ns{\Omega}\setminus\lp{\Omega}$ or $x \in \lp{\Omega}\setminus\ns{\Omega}$.
	By the definition of $\lp{\Omega}$, this entails $\sh{x} \in \partial\Omega$.
	In particular, if $\varphi \in \tests(\Omega)$, then $\sh{x} \not \in \supp \varphi$.
	As a consequence, for all $f \in \B$ and for all $\varphi \in \tests(\Omega)$, it holds
	$$\ns{\int}_{\hR^k} l'(f)\ \ns{\varphi} dx = \ns{\int}_{\hR^k} \lp{f}\ \ns{\varphi} dx.$$
	By S-continuity of $\ns{\varphi}$, we have also
	$$\ns{\int}_{\hR^k} \lp{f}\ \ns{\varphi} dx \sim \langle f, \ns{\varphi} \rangle.$$
	If we let $f = l(g)$ for some $g \in L^p(\Omega)$, from Theorem \ref{schwartz} we have
	$$
	\langle l(g),\ns{\varphi} \rangle \sim \ldual g, \varphi \rdual = \int_{\R^k} g\varphi dx.
	$$
	By putting together the previous equalities, we conclude that equation \ref{embedding equality} holds.
\end{proof}

We conjecture that for $p = 2$ and under the hypothesis that $\Omega$ has finite Lebesgue measure, we can choose the embedding $l$ in a way that the equality $\norm{(l' \circ l)(f) - \ns{f}}_2 \sim 0$ holds, as in the original embedding by Robinson and Bernstein.

\subsection{Grid functions as parametrized measures}

It is well known that weak limits of $L^p$ functions behave badly with respect to composition with a nonlinear function \cite{balder, evans nonlinear, sychev, webbym}.
Consider for instance a bounded sequence $\{ f_n \}_{n \in \N}$ of $L^\infty(\Omega)$ functions: by the Banach–Alaoglu theorem, there is a subsequence of $\{f_n\}_{n \in \N}$ that has a weak-$\star$ limit $f_\infty \in L^\infty(\Omega)$.
Now let $\Psi \in \bcf(\R)$: the sequence $\{ \Psi(f_n)\}_{n \in \N}$ is still bounded in $L^\infty(\Omega)$, so it has a weak-$\star$ limit $\Psi_\infty$.
However, in general $\Psi_\infty \not = \Psi(f_\infty)$.
To overcome this difficulty, the weak-$\star$ limit of the sequence $\{ f_n \}_{n \in \N}$ can be represented by a Young measure.
In particular, the main theorem of Young measures states that for every bounded sequence $\{ f_n \}_{n \in \N}$ of $L^\infty(\Omega)$ functions there exists a measurable function $\nu : \Omega \rightarrow \prob(\R)$ that satisfies the following property: for all $\Psi \in \bcf(\R)$, the weak-$\star$ limit of $\{ \Psi(f_n) \}_{n \in \N}$ is the function defined by $\overline{\Psi}(x) = \int_{\R} \Psi d\nu_x$, in the sense that the equality
\begin{equation}\label{young definition}
\lim_{n \rightarrow \infty} \int_{\Omega} \Psi(f_n(x)) \varphi(x) dx
=
\int_{\Omega} \left(\int_{\R} \Psi d\nu_x\right) \varphi(x) dx
=
\int_{\Omega} \overline{\Psi}(x) \varphi(x) dx
\end{equation}
holds for all $\varphi \in L^1(\Omega)$.

\begin{example}
	The following example is discussed in detail in \cite{webbym}.
	Consider the Rademacher functions $f_n(x) = f_0(n^2x)$, with $f_0(x) = \chi_{[0,1/2)}(x) - \chi_{[1/2,1)}(x)$ extended periodically over $\R$.
	It can be calculated that the Young measure $\nu$ associated to the sequence $\{f_n\}_{n \in \N}$ is constant and that
	$$
	\nu_x = \frac{1}{2} \delta_{1} + \frac{1}{2} \delta_{-1}
	$$
	for almost every $x \in \Omega$, i.e.\ that for all $\Psi \in \bcf(\R)$ and for all $\varphi \in L^1(\Omega)$,
	$$
	\lim_{n \rightarrow \infty} \int_{\R} \Psi(f_n(x)) \varphi(x) dx
	=
	\left(\frac{1}{2}\Psi(1) + \frac{1}{2}\Psi(-1)\right) \int_{\R} \varphi(x) dx.
	$$
\end{example}

In the setting of grid functions, instead of bounded sequences of $L^\infty$ functions, we have grid functions with finite $\ns{L^\infty}$ norm.
These functions can be used to represent weak-$\star$ limits of $L^\infty$ functions.

\begin{example}
	The function $f(n\varepsilon) = (-1)^n$ can be thought as a grid function representative for the weak-$\star$ limit of the Rademacher functions: in fact, for all $\Psi \in \bcf(\R)$ and for all $\varphi \in C^0_c(\Omega)$,
	$$
	\sh{\langle \ns{\Psi}(f), \ns{\varphi} \rangle} = \left(\frac{1}{2}\Psi(1) + \frac{1}{2}\Psi(-1)\right) \int_{\R} \varphi(x) dx.
	$$
	Since $C^0_c(\Omega)$ is dense in $L^1(\Omega)$, this is sufficient to conclude that the above formula holds for all $\varphi \in L^1(\Omega)$.
\end{example}

In \cite{cutland controls3}, Cutland showed that every grid function that has finite $\ns{L}^\infty$ norm corresponds to a Young measure.

\begin{theorem}\label{young}
	For every $f \in \grid{\dom}$ with $\norm{f}_\infty \in \fin$, there exists a Young measure $\nu^f : \Omega \rightarrow \prob(\R)$ such that, for all $\Psi \in \bcf(\R)$ and for all $\varphi \in C^0_c(\Omega)$,
	\begin{equation}\label{young equivalence equation}
	\sh{\langle \ns{\Psi}(f), \ns{\varphi} \rangle}
	=
	\int_{\Omega} \left( \int_{\R} \Psi d \nu^f_x \right) \varphi(x) dx.
	\end{equation}
\end{theorem}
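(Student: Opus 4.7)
The plan is to construct $\nu^f$ by first building a single Radon measure on $\Omega \times \R$ that encodes the joint distribution of position and value of $f$, and then disintegrating along the $\Omega$-fiber. Since $\norm{f}_\infty \in \fin$, fix $M \in \R$ with $M > \sh{\norm{f}_\infty}$, so that $f(x) \in \ns{[-M, M]}$ for every $x \in \dom$ and $\sh{f(x)} \in [-M, M]$ is well-defined for all $x \in \dom$.

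I would define a positive linear functional on $C^0_c(\Omega \times [-M, M])$ by
$$
L(\Phi) = \sh{\int_\dom \ns{\Phi}(x, f(x)) \, d\Lambda^k}.
$$
This is well-defined and finite: compactness of $\supp \Phi$ together with Lemma \ref{equivalenza integrali} gives the bound $|L(\Phi)| \leq \leb(K)\cdot \max |\Phi|$, where $K$ is the projection of $\supp \Phi$ onto $\Omega$. Positivity is immediate. By the Riesz-Markov theorem, $L$ is represented by a positive Radon measure $\mu$ on $\Omega \times [-M, M]$. Testing against $\Phi(x, y) = \varphi(x)$ for $\varphi \in C_c^0(\Omega)$ and applying Lemma \ref{equivalenza integrali} once more shows that the $\Omega$-marginal of $\mu$ equals the Lebesgue measure $\leb$ on $\Omega$.

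Applying the disintegration theorem yields a $\leb$-a.e.\ uniquely determined measurable family $\{\nu_x^f\}_{x \in \Omega}$ of probability measures on $[-M, M]$, with the probability condition forced by the $\Omega$-marginal being $\leb$, such that $d\mu(x,y) = d\nu_x^f(y)\, dx$. Extending each $\nu_x^f$ by zero outside $[-M, M]$ produces a Young measure $\nu^f: \Omega \to \prob(\R)$. Equation \ref{young equivalence equation} then follows by comparing the two expressions for $L$ applied to $\Phi(x, y) = \varphi(x)\,\Psi(y)$ (where $\Psi$ is restricted to $[-M, M]$, which is harmless since both $f$ and $\nu_x^f$ are supported there).

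The main obstacle is the marginal computation, which requires confirming that the internal counting measure $\varepsilon^k \cdot \#$ on $\dom$ pushes forward via the standard part map to $\leb$ on $\Omega$; this is essentially the content of Lemma \ref{equivalenza integrali} combined with the classical Loeb measure construction. One should also verify that $\partial \Omega$ carries no relevant mass, which is automatic here because only $\varphi \in C_c^0(\Omega)$ appears in the statement. An alternative route is to build $\nu^f$ intrinsically by Loeb-lifting the map $x \mapsto (\st x, \sh{f(x)})$ from $\dom$ to $\overline{\Omega} \times [-M, M]$ and pushing forward the Loeb measure; the marginal computation and disintegration proceed identically.
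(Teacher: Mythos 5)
Your proof is correct, but the construction step differs from the paper's. The paper first encodes $f$ as an internal family of Dirac measures $\tilde f(x)=\delta_{\lp{f}(x)}$, builds the internal measure $\mu(A\times B)=\ns{\int}_A \tilde f_x(B)\,dx$ on $\ns{\Omega}\times\ns{[-n,n]}$, passes to its Loeb measure, pushes it forward to $\Omega\times[-n,n]$ via the standard part map, disintegrates by Rohlin's theorem, and then invokes Lemma 2.6 of Cutland's paper to obtain equality \ref{young equivalence equation}. You instead bypass the Loeb construction entirely: you define the positive linear functional $L(\Phi)=\sh{\int_{\dom}\ns{\Phi}(x,f(x))\,d\Lambda^k}$ on $C^0_c(\Omega\times[-M,M])$, get the Radon measure on the product from Riesz--Markov, identify its $\Omega$-marginal with $\leb$ via Lemma \ref{equivalenza integrali}, and then disintegrate; the key identity then drops out immediately by testing $L$ on products $\varphi\otimes\Psi$, with no external lemma needed. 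Both arguments share the same endgame (product measure with Lebesgue marginal, then disintegration into a probability kernel), but yours is more self-contained and elementary, trading the Loeb-measure machinery and the citation of Cutland for the Riesz representation theorem; the paper's route, on the other hand, produces the Loeb measure as an intermediate object, which ties the construction to the nonstandard measure-theoretic literature it builds on and is the natural setting for statements such as Corollary \ref{corollario importante young}. Your closing remark about pushing forward the Loeb measure of $x\mapsto(\sh{x},\sh{f(x)})$ is in fact essentially the paper's own proof. Two small points worth making explicit if you write this up: positivity of $L$ alone suffices for Riesz--Markov (no norm estimate sharper than finiteness is needed, so the bound $\leb(K)\max|\Phi|$ can be replaced by any finite bound coming from a cube containing $K$), and the a.e.\ defined disintegration kernel should be fixed on the exceptional null set (say as $\delta_0$) so that $\nu^f$ is a genuine map $\Omega\rightarrow\prob(\R)$.
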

\begin{proof}
	Since $\norm{f}_\infty \in \fin$, there exists $n \in \R$ such that $|f(x)| < n$.
	We can identify $f$ with a function $\tilde{f} : \lp{\Omega} \rightarrow \ns{\prob(\ns{[-n,n]})}$ defined by
	$\tilde{f}(x) = \delta_{\lp{f}(x)}$.
	Notice that for all $\Psi \in \bcf(\R)$ and for all $\varphi \in C^0_c(\Omega)$ it holds
	\begin{eqnarray}
	\notag
	\langle \ns{\Psi(f)}, \ns{\varphi} \rangle
	&\sim&
	\ns{\int_{\lp{\Omega}}} \ns{\Psi(\lp{f}(x))} \ns{\varphi(x)} dx\\
	&=&\label{quiqui}
	\ns{\int_{\lp{\Omega}}} \left(\ns{\int}_{\ns{[-n,n]}} \ns{\Psi} d \tilde{f}(x) \right) \ns{\varphi}(x)dx.
	\end{eqnarray}
	
	We define an internal measure $\mu$ over $\ns{\Omega} \times \ns{[-n,n]}$ by posing
	$$
	\mu(A\times B) = \ns{\int}_A \tilde{f}_x(B) dx
	$$
	for all Borel $A \subseteq \Omega$ and for all Borel $B \subseteq \ns{[-n,n]}$.
	Let $L_\mu$ be the Loeb measure obtained from $\mu$ (for the properties of the Loeb measure, we refer for instance to \cite{nsa theory apps, lo, nsa working math}).
	We can define a standard measure $\mu_s$ over $\Omega \times [-n,n]$ by posing
	$$
	\mu_s(A \times B) = L_\mu(\{x \in \ns{\Omega} \times \ns{[-n,n]} : \sh{x} \in A \times B\}).
	$$
	Since $\mu_s$ satisfies $\mu_s(A \times [-n,n]) = \leb(A)$ for all Borel $A \subseteq \Omega$, by Rohlin's Disintegration Theorem the measure $\mu_s$ can be disintegrated as
	$$
	\mu_s(A \times B) = \int_A \nu^f_x(B) dx,
	$$
	with $\nu^f : \Omega \rightarrow \prob([-n,n])$.
	By Lemma 2.6 of \cite{cutland controls3}, $\nu^f$ satisfies
	$$
	\sh{\left(\ns{\int_{\ns{\Omega}}} \left(\ns{\int}_{\ns{[-n,n]}} \ns{\Psi} d \tilde{f}(x) \right) \ns{\varphi}(x)dx\right)}
	=
	\int_\Omega \left(\int_{[-n,n]} \Psi d \nu^f_x\right) \varphi(x) d x.
	$$
	for all $\Psi \in \bcf(\R)$ and for all $\varphi \in C^0_c(\Omega)$.
	Thanks to equality \ref{quiqui}, we deduce that $\nu^f$ satisfies \ref{young equivalence equation}.
	We can extend $\nu^f_x$ to all of $\prob{(\R)}$ by defining $\nu^f_x(A) = \nu^f_x(A\cap[-n,n])$ for all Borel sets $A \subseteq \R$ and for all $x \in \Omega$, thus obtaining a Young measure that satisfies equation \ref{young equivalence equation}.
\end{proof}

In \cite{ball, webbym}, it is shown that Young measures describe weak-$\star$ limits of bounded sequences of $L^\infty$ functions.
We will now show that grid functions with finite $L^\infty$ norm can be similarly used to represent weak-$\star$ limits of $L^\infty$ functions in the setting of grid functions.
This is a consequence of a more general property of the correspondence between grid functions and Young measures: if $f \in \grid{\dom}$ satisfies $\norm{f}_\infty \in \fin$ and $\nu^f$ is the Young measure associated to $f$ in the sense of Theorem \ref{young}, then $[f]$ corresponds to the barycentre of $\nu^f$.

\begin{theorem}\label{lemma dirac}
	Let $f \in \grid{\dom}$ with $\norm{f}_\infty \in \fin$, and let $\nu^f$ be the Young measure that satisfies equality \ref{young equivalence equation}.
	Then $[f] \in L^\infty(\Omega)$ and the following equality holds for a.e.\ $x \in \Omega$:
	\begin{equation}\label{uguaglianza inutile}
	[f](x) = \int_{\R} \tau d\nu^f_x.
	\end{equation}
	Moreover,
	\begin{enumerate}
		\item if $\{f_n\}_{n \in \N}$ is a sequence of $L^\infty$ functions that converges weakly-$\star$ to $\nu^f$ in the sense of equation \ref{young definition}, then $f_n \stackrel{\star}{\weakly} [f]$ in $L^\infty$;
		\item if $\nu^f$ is Dirac, then 
		$\nu^f_x$ is the Dirac measure centred at $[f](x)$ for a.e.\ $x \in \Omega$.
	\end{enumerate}
\end{theorem}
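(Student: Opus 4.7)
The plan is to apply the Young measure representation \ref{young equivalence equation} with a continuous cutoff $\Psi \in \bcf(\R)$ that coincides with the identity on an interval containing all the relevant values. Since $\norm{f}_\infty \in \fin$, fix a standard $n \in \N$ with $n > \sh{\norm{f}_\infty}$, so that $|f(x)| < n$ for every $x \in \dom$. Choose $\Psi \in \bcf(\R)$ with $\Psi(\tau) = \tau$ on $[-n,n]$, for instance by multiplying the identity by a continuous cutoff equal to $1$ on $[-n,n]$ and supported in $[-2n, 2n]$. By transfer, $\ns{\Psi}(f(x)) = f(x)$ for every $x \in \dom$.

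For the main claim, define $g(x) = \int_\R \tau \, d\nu^f_x$. The construction in the proof of Theorem \ref{young} guarantees that each $\nu^f_x$ is a probability measure supported in $[-n, n]$, so $g$ is measurable with $|g(x)| \leq n$, hence $g \in L^\infty(\Omega)$. Since $\Psi(\tau) = \tau$ on the support of $\nu^f_x$, equation \ref{young equivalence equation} gives, for every $\varphi \in \tests(\Omega) \subset C^0_c(\Omega)$,
\[
\sh{\langle f, \ns{\varphi}\rangle} = \sh{\langle \ns{\Psi}(f), \ns{\varphi}\rangle} = \int_\Omega \left(\int_\R \Psi \, d\nu^f_x\right)\varphi(x)\, dx = \int_\Omega g(x)\varphi(x)\, dx.
\]
The isomorphism $\Phi$ of Theorem \ref{bello} then forces $[f] = g$ as distributions, which proves that $[f] \in L^\infty(\Omega)$ together with equation \ref{uguaglianza inutile}.

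For item (1), the sequence $\{f_n\}$ is bounded in $L^\infty$ (this is implicit in the Young measure setup recalled around equation \ref{young definition}), say with $\sup_n \norm{f_n}_\infty \leq M$. Enlarge the cutoff if necessary so that $\Psi(\tau) = \tau$ on $[-\max(n,M), \max(n,M)]$; then $\Psi(f_n) = f_n$ a.e.\ for every $n$, while still $\int_\R \Psi \, d\nu^f_x = g(x)$. Equation \ref{young definition} then yields, for every $\varphi \in L^1(\Omega)$,
\[
\int_\Omega f_n(x)\varphi(x)\, dx = \int_\Omega \Psi(f_n(x))\varphi(x)\, dx \longrightarrow \int_\Omega g(x)\varphi(x)\, dx = \int_\Omega [f](x)\varphi(x)\, dx,
\]
which is precisely $f_n \stackrel{\star}{\weakly} [f]$ in $L^\infty$. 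For item (2), if $\nu^f_x = \delta_{h(x)}$ for a measurable function $h$, then by the main claim $[f](x) = \int_\R \tau \, d\delta_{h(x)} = h(x)$ a.e., so $\nu^f_x = \delta_{[f](x)}$ for a.e.\ $x \in \Omega$.

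The only mildly delicate step is producing a cutoff $\Psi$ that lies in $\bcf(\R)$ (continuous, bounded, vanishing at infinity) while agreeing with the identity on a prescribed interval, but this is a routine construction. Everything else reduces to combining Theorem \ref{young} with the interpretation of $g(x) = \int_\R \tau\, d\nu^f_x$ as the barycentre of $\nu^f_x$.
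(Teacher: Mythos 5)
Your proof is correct and follows essentially the same route as the paper: identify $[f]$ with the barycentre $g(x)=\int_{\R}\tau\,d\nu^f_x$ by testing against test functions through equality \ref{young equivalence equation}, and then deduce (1) and (2) from this identification. You are in fact more explicit than the paper on two points it leaves implicit, namely the cutoff $\Psi\in\bcf(\R)$ agreeing with the identity on the support of $\nu^f_x$ (the identity itself is not in $\bcf(\R)$, so this step is genuinely needed) and the uniform $L^\infty$ bound on $\{f_n\}$ in (1), which is indeed required for the conclusion and corresponds to the intended setting of equation \ref{young definition}.
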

\begin{proof}
	Define a function $g$ by posing $g(x) = \int_{\R} \tau d\nu^f_x$ for all $x \in \Omega$.
	Since $|g(x)| \leq \sh{\norm{f}_\infty}$ for a.\ e.\ $x \in \Omega$ and since $\norm{f}_\infty \in \fin$, $g \in L^\infty(\Omega)$.
	By Theorem \ref{young}, for all $\varphi \in C^0_c(\Omega)$ we have the following equalities:
	$$
	\int_{\Omega} g(x) \varphi(x) dx = \int_{\Omega} \int_{\R} \tau d\nu^f_x \varphi(x) dx
	=
	\sh{\langle f, \ns{\varphi} \rangle}
	=
	\int_\Omega [f] \varphi dx.
	$$
	Since $C^0_c(\Omega)$ is dense in $L^1(\Omega)$, we deduce that $g = [f]$ in $L^\infty(\Omega)$, as we wanted.
	
	We will now prove (1).
	By hypothesis, from equation \ref{young definition} and from equation \ref{young equivalence equation}, it holds
	$$
	\lim_{n \rightarrow \infty} \int_{\Omega} \Psi(f_n(x)) \varphi(x) dx
	=
	\int_{\Omega} \left(\int_{\R} \Psi d\nu^f_x\right) \varphi(x) dx
	=
	\sh{\langle \ns{\Psi}(f), \ns{\varphi} \rangle}
	$$
	for all $\Psi \in \bcf(\R)$ and for all $\varphi \in C^0_c(\Omega)$.
	As a consequence, by considering a function $\Psi \in \bcf(\R)$ with $\Psi(x) = 1$ for all $x$ satisfying $|x| \leq \sh{\norm{f}_\infty}$, we obtain that the weak-$\star$ limit of the sequence $\{f_n\}_{n \in \N}$ is equal to $[f]$.
	
	Assertion (2) is a consequence of equality \ref{uguaglianza inutile}.
\end{proof}

If the sequence $\{f_n\}_{n \in \N}$ is not bounded in $L^\infty$, then it can be proved that there exists a parametrized measure  $\nu : \Omega \rightarrow \rad(\R)$ such that for all $\Psi \in \bcf(\R)$ the weak-$\star$ limit of the sequence $\{ \Psi(f_n) \}_{n \in \N}$ is the function defined a.e.\ by $\overline{\Psi}(x) = \int_{\R} \Psi d\nu_x$ (for an in-depth discussion of this result, we refer to \cite{ball}).
Notice that $\nu$ takes values in $\rad(\R)$ instead of $\prob(\R)$, since the sequence $\{f_n\}_{n \in \N}$ could diverge in a subset of $\Omega$ with positive measure.

The grid function counterpart of this result is that for any $f \in \grid{\dom}$ there exists a function $\nu^f : \Omega \rightarrow \rad(\R)$ that satisfies equation \ref{young equivalence equation}, even if $\norm{f}_\infty \not \in \fin$.
If $\norm{f}_\infty \not \in \fin$, $\nu_x^f$ might not be a probability measure, but it still satisfies the inequalities $0 \leq \nu_x^f(\R) \leq 1$ for all $x \in \Omega$.
In particular, the difference between $\nu_x^f(\R)$ and $1$ is due to $f$ being unlimited at some non-negligible fraction of $\mu(x)\cap\Lambda^k$.

\begin{theorem}\label{parametrized measures}
	For every $f \in \grid{\dom}$, there exists a parametrized measure $\nu^f : \Omega \rightarrow \rad(\R)$ such that, for all $\Psi \in \bcf(\R)$ and for all $\varphi \in C^0_c(\Omega)$,
	equality \ref{young equivalence equation} holds.
	Moreover, for all $x \in \Omega$ and for all Borel $A \subseteq \R$, $0\leq \nu^f_x(A) \leq 1$.
\end{theorem}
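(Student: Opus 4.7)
The plan is to extend the Loeb-measure construction used in the proof of Theorem \ref{young}, which relied on $\norm{f}_\infty \in \fin$ to confine the values of $f$ to $\ns{[-n,n]}$. When $f$ is allowed to take infinite values, I would work on the one-point compactification $\R \cup \{\infty\}$, absorbing all non-finite values of $\lp{f}$ into the ideal point $\infty$. Since every $\Psi \in \bcf(\R)$ extends continuously to $\overline{\Psi}:\R \cup \{\infty\} \to \R$ by setting $\overline{\Psi}(\infty) = 0$, the resulting parametrized measure $\nu^f$ will not see the mass that gets pushed out to $\infty$, which is exactly what will produce the inequality $\nu^f_x(A) \leq 1$.

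Concretely, after localizing (fix $\varphi \in C^0_c(\Omega)$ and replace $\Omega$ by a bounded open neighbourhood of $\sh{\supp \varphi}$, so that $\leb(\Omega) < +\infty$), I would define the internal measure $\mu$ on $\ns{\Omega} \times \ns{\R}$ by
$$\mu(E) = \ns{\int}_{\lp{\Omega}} \chi_E(x, \lp{f}(x))\, dx$$
for internal $E \subseteq \ns{\Omega}\times \ns{\R}$, and let $L_\mu$ be its Loeb measure. Define the extended standard-part map $\overline{\mathrm{st}} : \ns{\Omega} \times \ns{\R} \to \overline{\Omega} \times (\R \cup \{\infty\})$ by $\overline{\mathrm{st}}(x,y) = (\sh{x}, \sh{y})$ when $y$ is finite, and $\overline{\mathrm{st}}(x,y) = (\sh{x}, \infty)$ when $y$ is infinite. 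Push $L_\mu$ forward to obtain a standard Borel measure $\overline{\mu}$ on $\overline{\Omega} \times (\R \cup \{\infty\})$ whose marginal on $\overline{\Omega}$ is the Lebesgue measure, since the total $\mu$-mass equals $\ns{\leb}(\lp{\Omega}) \sim \leb(\Omega)$.

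By Rohlin's disintegration theorem on the compact standard Borel space $\R \cup \{\infty\}$, as already invoked in the proof of Theorem \ref{young}, I can write $\overline{\mu} = \int_\Omega \overline{\nu}_x\, d\leb$ with $\overline{\nu}_x \in \prob(\R \cup \{\infty\})$ for a.e.\ $x \in \Omega$, and then define $\nu^f_x := \overline{\nu}_x|_\R \in \rad(\R)$. The bounds $0 \leq \nu^f_x(A) \leq \overline{\nu}_x(A) \leq 1$ for all Borel $A \subseteq \R$ then follow automatically. To verify equation \ref{young equivalence equation}, I would use that $\overline{\Psi}(y)\varphi(x)$ is a bounded continuous function on the compact space $\overline{\Omega} \times (\R \cup \{\infty\})$, so the Loeb-integral argument from the proof of Theorem \ref{young} (following Lemma 2.6 of \cite{cutland controls3}) gives $\sh{\langle \ns{\Psi}(f), \ns{\varphi}\rangle} = \int \overline{\Psi}(y) \varphi(x)\, d\overline{\mu}$, which by disintegration and the vanishing of $\overline{\Psi}$ at $\infty$ equals $\int_\Omega (\int_\R \Psi\, d\nu^f_x) \varphi(x)\, dx$. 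The main obstacle is to check that the extended standard-part map is Loeb-measurable and that the compactification is compatible with Loeb integration of continuous functions; it is here that the hypothesis $\Psi \in \bcf(\R)$, guaranteeing continuity of $\overline{\Psi}$ on the one-point compactification, becomes essential.
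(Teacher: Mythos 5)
Your argument is correct, but it takes a genuinely different route from the paper's. The paper reduces the statement to Theorem \ref{young}: it truncates $f$ at each finite level $n$, obtains Young measures $\nu^{n}$ for the truncations $f_n$, extracts a weak-$\star$ convergent subsequence via Banach--Alaoglu, and uses the decay of $\Psi$ at infinity (the tail bound $|\Psi(x)|\leq 1/i$ for $|x|\geq n_i$, which makes $\langle \ns{\Psi}(f_{n_i}),\ns{\varphi}\rangle$ and $\langle \ns{\Psi}(f),\ns{\varphi}\rangle$ differ by at most $2/i\,\norm{\ns{\varphi}}_1$) to identify the limit as a measure satisfying equality \ref{young equivalence equation}; the bound $0\leq\nu^f_x(A)\leq 1$ is then obtained from lower semicontinuity of mass under weak-$\star$ limits. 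You instead rerun the Loeb-measure construction of Theorem \ref{young} directly on the one-point compactification $\R\cup\{\infty\}$, sending the unlimited values of $\lp{f}$ to the ideal point; since every $\Psi\in\bcf(\R)$ is uniformly continuous and extends continuously by $0$ at $\infty$, one has $\ns{\Psi}(y)\sim\overline{\Psi}(\sh{y})$ for finite $y$ and $\ns{\Psi}(y)\sim 0$ for infinite $y$, so the lifting argument goes through, and restricting the disintegrated probability measures $\overline{\nu}_x$ from $\R\cup\{\infty\}$ to $\R$ yields the sub-probability bound for free. Your route buys a more transparent description of the mass defect: $1-\nu^f_x(\R)$ is exactly the Loeb mass sent to $\infty$ near $x$, which is precisely the interpretation the paper states only informally before the theorem; it also avoids the truncation and weak-$\star$ compactness machinery. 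The paper's route buys the ability to reuse Theorem \ref{young} as a black box, sidestepping the Loeb-measurability of the extended standard-part map and the compactified disintegration, which you correctly flag as the technical points to check. One detail you should make explicit: your localization is per test function, so to produce a single parametrized measure $\nu^f$ on all of $\Omega$ you need to exhaust $\Omega$ by countably many bounded open subsets and use a.e.-uniqueness of disintegrations to patch the local families together (defining $\nu^f_x$ arbitrarily, say as the zero measure, on the exceptional null set so that the stated bounds hold for all $x$); this is routine, and indeed no worse than the implicit $\sigma$-finiteness issue in the paper's own proof of Theorem \ref{young}, but it should be said.
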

\begin{proof}
	Let $f \in \grid{\dom}$, and for all $n \in \N$ define
	$$f_n(x) = \left\{\begin{array}{ll}
	f(x)& \text{if } |f(x)| \leq n,\\
	n & \text{if } f(x) > n,\\
	-n & \text{if } f(x) < -n.
	\end{array}\right.
	$$
	Since for all $n \in \N$ it holds $\norm{f_n}_\infty \leq n \in \fin$, by Theorem \ref{young} there exists a Young measure $\nu^{n}$ that satisfies
	\begin{equation}\label{you1}
	\sh{\langle \ns{\Psi}(f_n), \ns{\varphi} \rangle}
	=
	\int_{\Omega} \left( \int_{\R} \Psi d \nu^{n}_x \right) \varphi(x) dx.
	\end{equation}
	for all $\Psi \in \bcf(\R)$ and for all $\varphi \in C^0_c(\Omega)$.
	
	Recall that a sequence of parametrized measures $\{\mu^n\}_{n \in \N}$ converges weakly-$\star$ to a parametrized measure $\mu$ if for all $\Psi \in \bcf(\R)$, the sequence $\{\Psi_n\}_{n \in \N}$ of $L^\infty$ functions defined by
	$$
	\Psi_n(x) = \int_{\R} \Psi d \mu^n_x
	$$
	converges weakly-$\star$ to a function $\Psi_{\infty} \in L^\infty(\Omega)$ defined by
	$$
	\Psi_{\infty}(x) = \int_{\R} \Psi d \mu_x.
	$$
	Define $\nu^f$ as the parametrized measure satisfying $\nu^{n} \stackrel{\star}{\weakly} \nu^f$ for some subsequence (not relabelled) of $\{\nu^{n}\}_{n \in \N}$.
	The existence of such a weak-$\star$ limit can be obtained as a consequence of the Banach-Alaouglu theorem 
	(for further details about the weak-$\star$ limit of measures, we refer to to \cite{evans nonlinear}).
	We claim that $\nu^f$ satisfies equality \ref{young equivalence equation} and that for all $x \in \Omega$, $0\leq \nu^f_x(\R) \leq 1$.
	
	Let $\Psi \in\bcf(\R)$.
	Since $\lim_{|x| \rightarrow \infty} \Psi(x) = 0$, there is an increasing sequence of natural numbers $\{n_i\}_{i \in \N}$ such that if $|x| \geq n_i$, then $|\Psi(x)| \leq 1/i$.
	As a consequence of this inequality, for all $i \in \N$ and for all $\varphi \in C^0_c(\Omega)$ it holds
	$$
	\left| \langle \ns{\Psi}(f_{n_i}), \ns{\varphi} \rangle - \langle \ns{\Psi}(f), \ns{\varphi} \rangle \right| \leq 2/i \norm{\ns{\varphi}}_1.
	$$
	Taking into account equation \ref{you1}, from the previous inequality we obtain
	$$
	\left| \int_{\Omega} \left( \int_{\R} \Psi d \nu^{n_i}_x \right) \varphi(x) dx - \sh{\langle \ns{\Psi}(f), \ns{\varphi} \rangle} \right| \leq 2/i \norm{\varphi}_1.
	$$
	As a consequence, we deduce that
	$$
	\lim_{i \rightarrow \infty} \int_{\Omega} \left( \int_{\R} \Psi d \nu^{n_i}_x \right) \varphi(x) dx
	=
	\sh{\langle \ns{\Psi}(f), \ns{\varphi} \rangle}.
	$$
	This is sufficient to entail that $\nu^{n} \stackrel{\star}{\weakly} \nu^f$ and that $\nu^f$ satisfies equality \ref{young equivalence equation}.
	
	The inequality $0 \leq \nu^f_x(A) \leq 1$ for all Borel $A \subseteq \R$ is a consequence of the lower semicontinuity of the weak-$\star$ limit of measures (see for instance theorem 3 of \cite{evans nonlinear}).
\end{proof}

As a consequence of Theorem \ref{parametrized measures}, we deduce that the hypothesis $\norm{f}_\infty \in \fin$ in Theorem \ref{young} can be relaxed.
In particular, if $g$ is a grid function that differs from $f$ at some null set, then $f$ and $g$ induce the same parametrized measure, even if $f \not \equiv g$.

\begin{corollary}\label{corollario importante young}
	Let $L_{N}$ be the Loeb measure obtained from the measure $\mu_N(A) = |A|/N^k$ for all internal $A \subseteq \Lambda^k$.
	If for $f, g \in \grid{\dom}$ it holds $L_{N}(\{x \in \dom : f(x) \not \sim g(x) \}) = 0$, then $\nu^f = \nu^g$.
	If $\norm{f-g}_p \sim 0$, then $\nu^f = \nu^g$.
\end{corollary}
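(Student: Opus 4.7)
The plan is to reduce both statements to the integral criterion of equation \ref{young equivalence equation}: if $\langle \ns{\Psi}(f), \ns{\varphi}\rangle \sim \langle \ns{\Psi}(g), \ns{\varphi}\rangle$ for every $\Psi \in \bcf(\R)$ and every $\varphi \in C^0_c(\Omega)$, then by Theorem \ref{parametrized measures} both $\nu^f$ and $\nu^g$ produce the same $L^\infty(\Omega)$ function $x \mapsto \int_{\R} \Psi\, d\nu_x$ for each $\Psi$. Since $\bcf(\R) = C_0(\R)$ is separable, testing against a countable dense family and invoking Riesz representation slice-wise yields $\nu^f_x = \nu^g_x$ for a.e.\ $x$, i.e.\ $\nu^f = \nu^g$.

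For the first claim, fix $\Psi \in \bcf(\R)$, $\varphi \in C^0_c(\Omega)$ and a standard $\eta > 0$. Since every function in $\bcf(\R)$ extends continuously to the one-point compactification of $\R$, $\Psi$ is uniformly continuous; pick a standard modulus $\delta > 0$ with $|x-y|<\delta \Rightarrow |\Psi(x)-\Psi(y)|<\eta$ and transfer it to $\ns{\Psi}$ on $\hR$. The internal set $F_\delta = \{x \in \dom : |f(x)-g(x)| \geq \delta\}$ is contained in the Loeb-null external set $F = \{x \in \dom : f(x) \not\sim g(x)\}$, so $L_{N}(F_\delta) = 0$ and hence $\mu_N(F_\delta) \sim 0$. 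Splitting the grid sum along $F_\delta$ and its complement yields
\[
	|\langle \ns{\Psi}(f)-\ns{\Psi}(g), \ns{\varphi}\rangle| \leq 2\norm{\Psi}_\infty \norm{\varphi}_\infty \, \mu_N(F_\delta) + \eta \, \norm{\ns{\varphi}}_1,
\]
whose first summand is infinitesimal (standard constants times an infinitesimal) and whose second is bounded by $\eta \cdot C$ with $C := \sh{\norm{\ns{\varphi}}_1}$ finite, since $\varphi$ has compact support and Lemma \ref{equivalenza integrali} applies. As $\eta > 0$ is arbitrary, the left-hand side is infinitesimal.

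For the second claim, I will show that $\norm{f-g}_p \sim 0$ forces the hypothesis of the first claim. If $p = \infty$, then $|f(x)-g(x)| \leq \norm{f-g}_\infty \sim 0$ for every $x$, so $F = \emptyset$. If $1 \leq p < \infty$, a discrete Chebyshev estimate gives $\mu_N(F_\delta) \leq \delta^{-p} \norm{f-g}_p^p \sim 0$ for every standard $\delta > 0$, so $L_{N}(F_\delta) = 0$; writing $F = \bigcup_{n \in \N} F_{1/n}$ and using countable subadditivity of $L_{N}$ yields $L_{N}(F) = 0$.

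The chief obstacle is the internal/external bookkeeping: $F$ itself is external, so one must descend to the internal cuts $F_\delta$ before transferring anything or invoking the Loeb measure, and it is \emph{uniform} continuity of $\Psi$ (not mere continuity at standard points) that makes the off-$F_\delta$ error controllable by a single standard $\eta$ independent of $x$.
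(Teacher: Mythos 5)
Your proof is correct and follows essentially the same route as the paper's: reduce to the defining identity \ref{young equivalence equation}, show $\langle \ns{\Psi}(f), \ns{\varphi} \rangle \sim \langle \ns{\Psi}(g), \ns{\varphi} \rangle$ by splitting the grid sum along a set of infinitesimal $\mu_N$-measure where $\Psi$'s boundedness controls the error and using (uniform) continuity of $\Psi$ elsewhere, and deduce the $L^p$ case from the Loeb-null hypothesis. You merely make explicit, via the internal cuts $F_\delta$, the Chebyshev estimate, and countable subadditivity of $L_N$, the steps the paper's terser argument leaves implicit.
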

\begin{proof}
	If $L_{N}(\{x \in \dom : f(x) \not \sim g(x) \}) = 0$, then also
	$$L_{N}(\{x \in \dom : \ns{\Psi}(f(x)) \not \sim \ns{\Psi}(g(x)) \}) = 0$$
	for all $\Psi \in \bcf(\R)$.
	This is and the hypothesis $\Psi\in\bcf(\R)$ are sufficient to deduce
	$\langle \ns{\Psi}(f), \ns{\varphi} \rangle \sim \langle \ns{\Psi}(g), \ns{\varphi} \rangle$
	for all $\varphi \in C^0_c(\Omega)$ that, thanks to equation \ref{young equivalence equation}, is equivalent to the equality $\nu^f = \nu^g$.
	
	The hypothesis $\norm{f-g}_p \sim 0$ implies $L_{N}(\{x \in \dom : f(x) \not \sim g(x) \}) = 0$, so the equality between $\nu^f$ and $\nu^g$ is a consequence of the previous part of the proof.
\end{proof}

The above corollary can be seen as the grid function counterpart of Corollary 3.14 of \cite{webbym}, that shows how Young measure ignore concentration phenomena.
We find it useful to discuss this behaviour with an example, that also highlights how a grid function can describe simultaneously very different properties of a sequence of $L^p$ functions.

\begin{example}
	The following example is discussed from the standard viewpoint in \cite{webbym}.
	Consider the sequence $\{f_n\}_{n \in \N}$ defined by $f_n(x) = n \chi_{[1-1/n,1]}$.
	Notice that $\norm{f_n}_\infty = n$, so that the sequence is not bounded in $L^\infty(\R)$.
	For all $\Psi \in \bcf(\R)$ and for all $\varphi \in C^0_c(\R)$, it holds
	$$
	\lim_{n \rightarrow \infty} \int_{\R} \Psi(f_n) \varphi dx = \Psi(0) \int_{\R} \varphi dx
	$$
	so that the sequence $\{f_n\}_{n \in \N}$ converges weakly-$\star$ to the constant Young measure $\nu_x = \delta_0$ for all $x \in \R$.
	
	The sequence  $\{f_n\}_{n \in \N}$ satisfies the $L^1$ uniform bound $\norm{f_n}_1 = 1$ for all $n \in \N$.
	Since for all $\varphi \in \tests(\R)$ it holds
	$$
	\lim_{n \rightarrow \infty} \int_{\R} f_n \varphi dx = \lim_{n \rightarrow \infty} n \int_{[1-1/n,1]} \varphi dx = \varphi(1)
	$$
	the sequence $\{f_n\}_{n \in \N}$ converges in the sense of distributions to $\delta_1$, the Dirac distribution centred at $1$.
	Indeed, it can be proved that the sequence $\{f_n\}_{n \in \N}$ converges weakly-$\star$ to $\delta_1$ in the space $\rad(\R)$ of Radon measures.
	
	In the setting of grid functions, a representative for the limit of the sequence $\{f_n\}_{n \in \N}$ is given by $f_N = N \chi_{1}$.
	For all $\Psi \in \bcf(\R)$ and for all $\varphi \in C^0_c(\R)$, it holds
	$$
	\langle \ns{\Psi}(f_N) , \ns{\varphi} \rangle = \varepsilon \sum_{x \in \Lambda, \, x \not = 1} \Psi(0) \ns{\varphi}(x) + \varepsilon \ns{\Psi}(N) \varphi(1).
	$$
	Since $\Psi \in \bcf(\R)$, $\ns{\Psi}(N) \sim 0$ and, by Lemma \ref{equivalenza integrali}, we deduce
	$$
	\sh{\langle \ns{\Psi}(f_N) , \ns{\varphi} \rangle} = \Psi(0) \int_{\R} \varphi(x)dx.
	$$
	From the above equality and from equation \ref{young equivalence equation}, we deduce that the Young measure associated to $f_N$ is the constant Young measure $\nu_x = \delta_0$ for all $x \in \R$.	
	Notice that the same result could have been deduced from Corollary \ref{corollario importante young} by noticing that, since $L_{N}(\{x \in \dom : f_N(x) \not \sim 0 \}) = 0$, the Young measure associated to $f_N$ is the same as the Young measure associated to the constant function $c(x) = 0$ for all $x \in \hR$.
	
	As for the distribution $[f_N]$, since for all $\varphi \in \test(\Lambda)$ it holds
	$
	\langle N\chi_1, \varphi \rangle = \varphi(1),
	$
	we deduce that $[f_N] = \delta_1$.
	In particular, the grid function $f_N$ coherently describes the behaviour of the limit of the sequence $\{f_n\}_{n \in \N}$ both in the sense of Young measures and in the sense of distributions.
\end{example}

In the previous example we have considered a grid function $f$ with $\norm{f}_1 \in \fin$, and we verified that the parametrized measure associated to $f$ was indeed a Young measure.
This result holds under the more general hypothesis that $\norm{f}_p \in \fin$.

\begin{proposition}\label{proposition young lp}
	If $\norm{f}_p \in \fin$, then $\nu^f_x$ is a probability measure for a.e.\ $x \in \Omega$.
\end{proposition}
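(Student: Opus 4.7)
The plan is to establish the reverse inequality $\nu^f_x(\R) \geq 1$ for a.e.\ $x \in \Omega$, since Theorem \ref{parametrized measures} already provides $\nu^f_x(\R) \leq 1$. The case $p = \infty$ is already covered by Theorem \ref{young}, so the interesting case is $1 \leq p < \infty$.

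First, I would pick a sequence $\{\Psi_n\}_{n \in \N} \subset \bcf(\R)$ with $0 \leq \Psi_n \leq 1$ and $\Psi_n(y) \nearrow 1$ pointwise on $\R$ (for instance, continuous functions satisfying $\Psi_n(y) = 1$ for $|y| \leq n$ and $\supp \Psi_n \subset [-2n, 2n]$). Applying equation \ref{young equivalence equation}, I get
$$
\sh{\langle \ns{\Psi_n}(f), \ns{\varphi}\rangle}
= \int_{\Omega} \left(\int_{\R} \Psi_n \, d\nu^f_x\right) \varphi(x)\, dx
$$
for every $\varphi \in C^0_c(\Omega)$. Monotone convergence applied pointwise in $x$ to the finite positive measures $\nu^f_x$ gives $\int_\R \Psi_n \, d\nu^f_x \nearrow \nu^f_x(\R)$, and a second application against $\varphi \geq 0$ yields $\int_\Omega \int_\R \Psi_n d\nu^f_x \, \varphi dx \nearrow \int_\Omega \nu^f_x(\R) \, \varphi\, dx$.

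The key step is a discrete Chebyshev estimate on the right-hand side. From the hypothesis $\norm{f}_p \in \fin$ with $p < \infty$, I obtain for every standard $n \in \N$:
$$
\varepsilon^k \left|\{x \in \dom : |f(x)| > n\}\right| \leq \frac{\norm{f}_p^p}{n^p}.
$$
Then for nonnegative $\varphi \in C^0_c(\Omega)$, since $\ns{\Psi_n}(f(x)) = 1$ whenever $|f(x)| \leq n$,
$$
\langle \mathbf{1}_{\dom}, \ns{\varphi}\rangle - \langle \ns{\Psi_n}(f), \ns{\varphi}\rangle
\leq \langle \chi_{\{|f|>n\}}, \ns{\varphi}\rangle
\leq \norm{\varphi}_\infty \cdot \frac{\norm{f}_p^p}{n^p}.
$$
By Lemma \ref{equivalenza integrali}, $\langle \mathbf{1}_{\dom}, \ns{\varphi}\rangle \sim \int_\Omega \varphi \, dx$, so taking standard parts:
$$
\sh{\langle \ns{\Psi_n}(f), \ns{\varphi}\rangle} \geq \int_\Omega \varphi \, dx - \norm{\varphi}_\infty \cdot \frac{\sh{\norm{f}_p^p}}{n^p}.
$$

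Finally, letting $n \to \infty$ and combining with the monotone convergence identity above yields $\int_\Omega \nu^f_x(\R)\, \varphi\, dx \geq \int_\Omega \varphi\, dx$ for every nonnegative $\varphi \in C^0_c(\Omega)$. Together with $\nu^f_x(\R) \leq 1$ from Theorem \ref{parametrized measures}, density of nonnegative compactly supported continuous functions forces $\nu^f_x(\R) = 1$ for a.e.\ $x \in \Omega$, which is the claim. The main obstacle is the justification of passing the supremum under the hyperfinite sum via Chebyshev; once one realizes that $\varepsilon^k$ is precisely the weight of the counting measure on $\dom$, the $L^p$ bound translates directly into an upper bound on the hyperfinite cardinality of the level set $\{|f|>n\}$, and the remainder is a routine limit argument.
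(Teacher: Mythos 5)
Your proposal is correct, but it takes a genuinely different route from the paper's. The paper's own proof is a two-line Loeb-measure argument: the hypothesis $\norm{f}_p \in \fin$ forces $L_{N}(\{y \in \dom : f(y) \notin \fin\}) = 0$ (with $L_N$ as in Corollary \ref{corollario importante young}), and since the defect $1-\nu^f_x(\R)$ arises precisely from $f$ being unlimited on a non-negligible Loeb fraction of the monad of $x$ (as explained before Theorem \ref{parametrized measures}), the set $\{x : \nu^f_x(\R)<1\}$ is Lebesgue-null. You instead work entirely on the standard side of the representation formula \ref{young equivalence equation}: truncations $\Psi_n \nearrow 1$ in $\bcf(\R)$, the hyperfinite Chebyshev bound $\varepsilon^k\,|\{x \in \dom : |f(x)|>n\}| \leq \norm{f}_p^p/n^p$, Lemma \ref{equivalenza integrali} for $\langle 1,\ns{\varphi}\rangle \sim \int_\Omega \varphi\,dx$, and monotone convergence, yielding $\int_\Omega \nu^f_x(\R)\,\varphi\,dx \geq \int_\Omega \varphi\,dx$ for all nonnegative $\varphi \in C^0_c(\Omega)$, which combined with $\nu^f_x(\R)\leq 1$ from Theorem \ref{parametrized measures} gives the claim. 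Your route buys self-containedness: it uses only the stated conclusions of Theorem \ref{parametrized measures} (the identity \ref{young equivalence equation} and the bound $0\leq \nu^f_x \leq 1$) rather than the Loeb-measure description of the mass defect, which in the paper is justified only informally, and it even quantifies the missing mass as $O(n^{-p})$; the paper's route buys brevity and makes the nonstandard mechanism (mass escapes only where $f$ is unlimited) transparent. Two small points to tidy: arrange the $\Psi_n$ to be increasing (your piecewise-linear choice does this) or replace monotone convergence by Fatou, and note explicitly that $\ns{\Psi_n}(f(x))=1$ whenever $|f(x)|\leq n$ follows by transfer.
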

\begin{proof}
	If for some $x \in \Omega$ it holds $\nu^f_x(\R) < 1$, then there exists $y \in \dom$, $y \sim x$ such that $f(y) \not \in \fin$.
	The hypothesis $\norm{f}_p \in \fin$ implies $L_{N}(\{y \in \dom : f(y) \not \in \fin \}) = 0$: this is sufficient to conclude that $\leb(\{x \in \Omega : \nu^f_x(\R) < 1 \}) = 0$, as desired.
\end{proof}

We will conclude the discussion of the relations between grid functions and parametrized measures by determining the parametrized measure associated to a periodic grid function with an infinitesimal period.
This is the grid function counterpart of the formula for the Young measure associated to the limit of a sequence of periodic functions (see Example 3.5 of \cite{balder}).
We will prove this result for $k = 1$, as the generalization to an arbitrary dimension is mostly a matter of notation.

\begin{proposition}\label{homogeneous}
	If $f \in \grid{\Lambda}$ is periodic of period $M\varepsilon \sim 0$, then the parametrized measure $\nu$ associated to $f$ is constant, and
	$$
	\int_{\R} \Psi d\nu_x = \sh{\left( \frac{1}{M} \sum_{i = 0}^{M-1} \ns{\Psi}(f(i\varepsilon))\right)}
	$$
	for all $x \in \Omega$ and for all $\Psi \in \bcf(\R)$.
\end{proposition}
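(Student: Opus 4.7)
The plan is to unfold the definition of $\nu^f$ via equation \ref{young equivalence equation}: I will fix arbitrary $\Psi \in \bcf(\R)$ and $\varphi \in C^0_c(\Omega)$ and compute the hyperfinite sum $\langle \ns{\Psi}(f),\ns{\varphi}\rangle$ by grouping the grid points $\Lambda$ into consecutive blocks of length $M\varepsilon$, exploiting periodicity, and then identifying the coarser sum as a Riemann sum of $\varphi$ with infinitesimal mesh $M\varepsilon$.

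Concretely, after writing every $x \in \Lambda$ uniquely as $x = jM\varepsilon + i\varepsilon$ with $i \in \{0,\ldots,M-1\}$ and $j \in \ns{\Z}$, the identity $f(jM\varepsilon + i\varepsilon) = f(i\varepsilon)$ from periodicity gives
\[
\langle \ns{\Psi}(f),\ns{\varphi}\rangle
= \varepsilon \sum_{j}\sum_{i=0}^{M-1}\ns{\Psi}(f(i\varepsilon))\,\ns{\varphi}(jM\varepsilon + i\varepsilon).
\]
Since $\varphi \in C^0_c(\Omega)$, the function $\ns{\varphi}$ is uniformly S-continuous on $\lp{\Omega}$, so for each $i \in \{0,\ldots,M-1\}$ one has $\ns{\varphi}(jM\varepsilon + i\varepsilon) = \ns{\varphi}(jM\varepsilon) + \eta_{j,i}$ with $|\eta_{j,i}|$ bounded by the (infinitesimal) modulus of continuity of $\ns{\varphi}$ evaluated at the infinitesimal $M\varepsilon$. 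Because $\supp \varphi$ is compact and $\Psi$ is bounded, the total contribution of the $\eta_{j,i}$ terms is bounded by $\norm{\Psi}_\infty \cdot \ns{\leb(\supp \varphi)} \cdot \omega_\varphi(M\varepsilon) \sim 0$. So the sum reduces, up to an infinitesimal, to
\[
\Bigl(\sum_{i=0}^{M-1}\ns{\Psi}(f(i\varepsilon))\Bigr)\cdot \varepsilon\sum_{j}\ns{\varphi}(jM\varepsilon)
= c_\Psi\cdot M\varepsilon\sum_{j}\ns{\varphi}(jM\varepsilon),
\]
where $c_\Psi = \frac{1}{M}\sum_{i=0}^{M-1}\ns{\Psi}(f(i\varepsilon))$ is independent of $j$ and finite (since $|c_\Psi|\leq \norm{\Psi}_\infty$).

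Next I would invoke a Riemann-sum argument analogous to Lemma \ref{equivalenza integrali}: since $\varphi$ is continuous with compact support and the mesh $M\varepsilon$ is infinitesimal, a transfer of the standard statement that Riemann sums of $\varphi$ with mesh $1/n$ converge to $\int_\R \varphi\,dx$ yields
\[
M\varepsilon\sum_{j}\ns{\varphi}(jM\varepsilon) \sim \int_\R \varphi(x)\,dx = \int_\Omega \varphi(x)\,dx.
\]
Combining and taking standard parts gives
\[
\sh{\langle \ns{\Psi}(f),\ns{\varphi}\rangle} = \sh{c_\Psi}\cdot \int_\Omega \varphi(x)\,dx.
\]
Comparing with the defining equality \ref{young equivalence equation} of $\nu^f$ and using density of $C^0_c(\Omega)$ in $L^1(\Omega)$, one concludes that $\int_\R \Psi\, d\nu^f_x = \sh{c_\Psi}$ for almost every $x \in \Omega$, so $\nu^f$ is (a.e., hence without loss of generality everywhere) the constant parametrized measure whose integral against $\Psi$ is $\sh{c_\Psi}$.

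The main obstacle is controlling the two infinitesimal errors rigorously in one stroke: the S-continuity replacement of $\ns{\varphi}(jM\varepsilon + i\varepsilon)$ by $\ns{\varphi}(jM\varepsilon)$ uniformly across all $j,i$, and the Riemann-sum approximation on the coarser grid of step $M\varepsilon$. Both are routine but require care because the grouping of $\Lambda$ into blocks of length $M\varepsilon$ may not align with $\supp \ns{\varphi}$; one checks that misaligned boundary blocks contribute at most $O(M\varepsilon)\cdot\norm{\Psi}_\infty$ to the sum, which is infinitesimal. A minor bookkeeping point is that the proposition asserts a constant $\nu_x$ for every $x\in\Omega$: since parametrized measures are defined up to a.e.\ equivalence, we are free to pick the constant representative produced by the argument.
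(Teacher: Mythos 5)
Your proposal is correct and follows essentially the same route as the paper's proof: decompose the hyperfinite sum into periodicity blocks of length $M\varepsilon$, use S-continuity of the test function to absorb the within-block errors into an infinitesimal, and recognize the coarse sum $M\varepsilon\sum_j \ns{\varphi}(jM\varepsilon)$ as a Riemann sum with infinitesimal mesh converging to $\int_\Omega \varphi\,dx$. The only cosmetic difference is that the paper phrases the estimate for grid test functions $\varphi \in S^0(\Lambda)$ with finitely supported $\sh{\supp\varphi}$ rather than for $\ns{\varphi}$ with $\varphi \in C^0_c(\Omega)$, which changes nothing of substance.
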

\begin{proof}
	Without loss of generality, let $M \in \ns{\N}$ and let $f$ be periodic over $[0,(M-1)\varepsilon]\cap\Lambda$, with $M\varepsilon\sim0$.	
	
	Let $\Psi \in \bcf(\R)$.
	At first, we will prove that $\frac{1}{M} \sum_{i = 0}^{M-1} \ns{\Psi}(f(i\varepsilon))$ is finite: in fact,
	\begin{equation}\label{finiteness}
	\inf_{x\in\hR}\ns{\Psi}(x) \leq
	\frac{1}{M} \sum_{i = 0}^{M-1} \ns{\Psi}(f(i\varepsilon))
	\leq \sup_{x \in \hR} \ns{\Psi}(x)
	\end{equation}
	and by the boundedness of $\Psi$, we deduce that $\frac{1}{M} \sum_{i = 0}^{M-1} \ns{\Psi}(f(i\varepsilon))$ is finite.
	
	Let now $\varphi \in S^0(\Lambda)$ with $\supp \varphi \subset [a,b]$, $a, b \in \fin$.
	Then there exists $h,k \in \ns{\N}$ satisfying $a \sim Mh\varepsilon$ and $b \sim Mk\varepsilon$.
	We have the equalities
	\begin{eqnarray}\notag
	\langle \ns{\Psi}(f), \varphi \rangle
	&\sim&
	\varepsilon \sum_{x\in\ni{Mh\varepsilon}{Mk\varepsilon}} \Psi(f(x))\varphi(x)\\\notag
	&=&
	\varepsilon \sum_{j = h}^{k} \left(\sum_{i = 0}^{M-1} \ns{\Psi}(f(i\varepsilon)) \varphi(jM\varepsilon + i\varepsilon) \right)\\\notag
	&=&
	\varepsilon \sum_{j = h}^{k} \left( \left(\sum_{i = 0}^{M-1} \ns{\Psi}(f(i\varepsilon))\right) (\varphi(jM\varepsilon) + e(j)) \right)\\\label{equiv misura 3}
	&=&
	\left(\frac{1}{M} \sum_{i = 0}^{M-1} \ns{\Psi}(f(i\varepsilon))\right) \left(M\varepsilon \sum_{j = h}^{k} ( \varphi(jM\varepsilon) + e(j))\right).
	\end{eqnarray}
	Let
	$$
	e = \max_{0 \leq i \leq M,\ k\leq j \leq h}\{|\varphi(jM\varepsilon)-\varphi(jM\varepsilon+i\varepsilon)|\}.
	$$
	Since $\varphi \in S^0(\Lambda)$ and $\supp \varphi \subset \fin$, $e \sim 0$ and, as a consequence, $|e(j)| \leq e \sim 0$.
	We deduce
	$$\left|M\varepsilon \sum_{j = k}^{h} e(j)\right| \leq M\varepsilon (k-h) e \sim (b-a)e \sim 0$$
	and, by equation \ref{finiteness},
	\begin{equation}\label{equiv misura 4}
	\left( \frac{1}{M} \sum_{i = 0}^{M-1} \ns{\Psi}(f(i\varepsilon))\right) \left( M\varepsilon \sum_{j = h}^{k} e(j)\right) \sim 0.
	\end{equation}
	Since $M\varepsilon \sim 0$,
	\begin{equation}\label{equiv misura 5}
	M\varepsilon \sum_{j = h}^{k} (\varphi(jM\varepsilon)
	\sim
	\int_{\sh{a}}^{\sh{b}} \sh{\varphi(x)} dx.
	\end{equation}
	Putting together equalities \ref{equiv misura 3}, \ref{equiv misura 4} and \ref{equiv misura 5}, we conclude
	$$
	\sh{\langle \ns{\Psi}(f), \varphi(x) \rangle}
	=
	\sh{\left(\frac{1}{M} \sum_{i = 0}^{M-1} \ns{\Psi}(f(i\varepsilon))\right)} \int_{\sh{a}}^{\sh{b}} \sh{\varphi(x)} dx
	$$
	as we wanted.
\end{proof}

\section{The grid function formulation of partial differential equations}\label{solutions}

We have seen that the space of grid functions extends coherently both the space of distributions and the space of Young measures.
For this reason, we believe they can successfully applied to the study of partial differential equations.
In this section, we will give some results that allow to give a coherent grid function formulation of stationary and time-dependent PDEs.
A solution to the grid function formulation can then be used to define a standard solution to the original problem.
It turns out that, for some nonlinear problems, this process will give rise to measure-valued solutions.

\subsection{The grid function formulation of linear PDEs}\label{linear pdes}

A linear PDE can be written in the most general form as
\begin{equation}\label{lift2}
L(u) = T,
\end{equation}
with $T \in \tests'(\Omega)$, where $L : \tests'(\Omega) \rightarrow \tests'(\Omega)$ is linear, and where the equality is meant in the sense of distributions, i.e.\
$$
\ldual L(u), \varphi \rdual = \ldual T, \varphi \rdual
$$
for all $\varphi \in \test(\Omega)$.
We would like to turn problem \ref{lift2} in a problem in the sense of grid functions, i.e.
\begin{equation}\label{lift1}
\L(u) = \f,
\end{equation}
with $\f \in \B$, where $\L : \B \rightarrow \B$ is $\hR$-linear, and where the equality is pointwise equality.
We would like to determine sufficient conditions that ensure equivalence between problem \ref{lift1} and problem \ref{lift2}, in the sense that \ref{lift2} has a solution if and only if \ref{lift1} has a solution.

Such a coherent formulation of linear PDEs relies upon the existence of $\hR$-linear extensions of linear functionals over the space of distributions.
Recall that every linear functional $L : \tests'(\Omega) \rightarrow \tests'(\Omega)$ induces an adjoint $\M : \test(\Omega) \rightarrow \test(\Omega)$ that satisfies
$$
\ldual L(T), \varphi \rdual = \ldual T, \M (\varphi) \rdual
$$
for all $T \in \tests'(\Omega)$ and for all $\varphi \in \tests(\Omega)$.
If we find a $\hR$-linear extension of $\M$ in the sense of grid functions, by taking the adjoint we are able to define a $\hR$-linear extension of $L$.

\begin{lemma}\label{lemma lift duale}
	For every 
	linear $L: \tests(\Omega) \rightarrow \tests(\Omega)$ there is a $\hR$-linear $\L :\grid{\dom} \rightarrow \grid{\dom}$ such that $\L(\ns{\varphi}) = \ns(L(\varphi))_{|\dom}$ for all $\varphi \in \tests(\Omega)$.
\end{lemma}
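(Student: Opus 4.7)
The plan is to exploit that $\grid{\dom}$ is an internal $\hR$-vector space and build $\L$ via a Hamel basis argument. First I would fix a Hamel basis $\{\varphi_\alpha\}_{\alpha \in I}$ of $\tests(\Omega)$ over $\R$ and consider the family $\mathcal{F} = \{\ns{\varphi_\alpha}_{|\dom}\}_{\alpha \in I}$ of grid functions, which by Lemma \ref{lemma test} lies inside $\test(\dom) \subseteq \grid{\dom}$. The critical technical point is to show that $\mathcal{F}$ is $\hR$-linearly independent in $\grid{\dom}$.

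To see this, suppose $c_1, \dots, c_n \in \hR$ satisfy $\sum_{i=1}^n c_i \ns{\varphi_i}_{|\dom} = 0$, with $\varphi_1, \dots, \varphi_n$ distinct elements of the Hamel basis, hence $\R$-linearly independent in $C^\infty(\Omega)$. Then there exist $x_1, \dots, x_n \in \Omega$ such that the matrix $A = (\varphi_i(x_j))_{i,j}$ is invertible. By Lemma \ref{aperti}, each $x_j$ can be approximated by some $y_j \in \dom$ with $y_j \sim x_j$; by S-continuity of $\ns{\varphi_i}$ (valid since $\ns{\varphi_i}_{|\dom} \in \test(\dom) \subseteq S^\infty(\dom)$) one has $\ns{\varphi_i}(y_j) \sim \varphi_i(x_j)$. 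Hence the matrix $M = (\ns{\varphi_i}(y_j))_{i,j}$ is entrywise infinitely close to $A$, so $\det M \sim \det A \neq 0$, and $M$ is invertible in $\hR^{n \times n}$. The $n$ equations $\sum_i c_i \ns{\varphi_i}(y_j) = 0$ then force $c_i = 0$ for every $i$.

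Next, I would use Zorn's lemma to extend $\mathcal{F}$ to a Hamel basis $\mathcal{B}$ of $\grid{\dom}$ over $\hR$, and define $\L$ on $\mathcal{B}$ by $\L(\ns{\varphi_\alpha}_{|\dom}) = \ns{L(\varphi_\alpha)}_{|\dom}$ for $\alpha \in I$ and $\L(b) = 0$ for $b \in \mathcal{B} \setminus \mathcal{F}$, then extend $\hR$-linearly. To verify the required property, for $\varphi \in \tests(\Omega)$ write $\varphi = \sum_i r_i \varphi_{\alpha_i}$ as a finite $\R$-linear combination in the basis. Then by $\R$-linearity of the map $\varphi \mapsto \ns{\varphi}_{|\dom}$, by $\hR$-linearity of $\L$, and by $\R$-linearity of $L$,
$$\L(\ns{\varphi}_{|\dom}) = \sum_i r_i \, \L(\ns{\varphi_{\alpha_i}}_{|\dom}) = \sum_i r_i \, \ns{L(\varphi_{\alpha_i})}_{|\dom} = \ns{L(\varphi)}_{|\dom}.$$

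The main obstacle is the $\hR$-linear independence of $\mathcal{F}$; once that is available, the rest is routine Hamel basis bookkeeping. That independence argument in turn rests on transferring invertibility of a matrix of standard function values to grid-point evaluations, which combines the density of $\dom$ in $\ns{\Omega}$ established in Lemma \ref{aperti} with the S-continuity of the nonstandard extensions of smooth compactly supported test functions.
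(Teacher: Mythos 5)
Your proof is correct as a proof of the statement as phrased, but it follows a genuinely different route from the paper. The paper does not construct $\L$ by hand: it considers, for each $\varphi\in\tests(\Omega)$, the set $U(\varphi)$ of \emph{internal} $\hR$-linear maps $\L$ with $\L(\ns{\varphi})=\ns{L(\varphi)}_{|\dom}$, verifies the finite intersection property by building such maps from hyperfinite ($\ns{}$)bases of $\grid{\dom}$, and then invokes the saturation hypothesis to obtain one map lying in all the $U(\varphi)$ simultaneously. You instead prove an auxiliary fact the paper never isolates — that the grid restrictions $\ns{\varphi_\alpha}_{|\dom}$ of $\R$-linearly independent test functions are $\hR$-linearly independent, via a nonsingular evaluation matrix at points of $\dom$ obtained from Lemma \ref{aperti} and S-continuity — and then extend by an external Hamel basis of $\grid{\dom}$ over $\hR$ using Zorn's lemma. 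That independence argument is sound (the existence of points making $(\varphi_i(x_j))$ invertible is a standard fact for linearly independent functions, and an infinitesimal entrywise perturbation of a standard invertible matrix stays invertible), and it is exactly the consistency condition needed for your definition of $\L$ on the basis to be legitimate; the verification for arbitrary $\varphi\in\tests(\Omega)$ by finite $\R$-linear combinations is routine, as you say.

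The trade-off between the two approaches is worth noting. Your construction needs no saturation at all, only choice, and the independence lemma is of independent interest. What the paper's saturation argument buys, and yours does not, is that the resulting $\L$ is \emph{internal}: it is produced as a common element of internal sets of internal linear maps. Internality is not demanded by the statement of the lemma, but it is tacitly used immediately afterwards, in Theorem \ref{teorema lift lineare}, where one passes from $\M_\Lambda$ to its adjoint with respect to the hyperfinite inner product; that adjoint exists by transfer of finite-dimensional linear algebra precisely because $\M_\Lambda$ is internal, whereas for the external operator produced by your Hamel-basis extension (which annihilates an external complement of the family $\mathcal{F}$) the representability of $g\mapsto\langle f,\M_\Lambda(g)\rangle$ by an element of $\grid{\dom}$ is not guaranteed. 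So your argument proves the lemma as stated, but if you want it to serve the role it plays in the paper you would either need to upgrade it to yield an internal map or rework the adjoint step.
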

\begin{proof}
	For $\varphi \in \tests'(\Omega)$ define
	$$
	U(\varphi) = \{ \L : \grid{\dom} \rightarrow \grid{\dom} \text{ such that } \L  \text{ is } \hR\text{-linear and } \L(\ns{\varphi}) = \ns{L(\varphi)}_{|\dom} \}
	$$
	and let $U = \{ U(\varphi) : \varphi \in \tests(\Omega)\}$.
	If we prove that $U$ has the finite intersection property, then, by saturation, $\bigcap U \not = \emptyset$, and any $\L \in \bigcap U$ is a $\hR$-linear function that satisfies $\L(\ns{\varphi}) = \ns(L(\varphi))_{\Lambda}$ for all $\varphi \in \tests(\Omega)$.
	
	We will prove that, if $\varphi_1, \ldots, \varphi_n \in \tests$, then $\bigcap_{i = 1}^n U(\varphi_i) \not = \emptyset$ by induction over $n$.
	If $n = 1$, we need to show that $U(\varphi) \not = \emptyset$ for all $\varphi \in \tests'(\Omega)$.
	If $\varphi = 0$, then the constant function $\L (f) = 0$ for all $f \in \grid{\dom}$ belongs to $U(\varphi)$.
	If $\varphi \not = 0$, let $f = \ns{\varphi}_{|\dom}$, $g = \ns{(L(\varphi)})_{|\dom}$, and let $\{f, b_2, \ldots, b_M\}$ be a $\ns{}$basis of $\grid{\dom}$.
	Define also
	$$
	\L\left(a_1f + \sum_{i = 2}^M a_i b_i \right) = a_1g
	$$
	for all $a_1, \ldots a_M \in \hR$.
	By definition, $\L$ is $\hR$-linear and $\L \in U(\varphi)$.
	
	We will now show that if $\bigcap_{i = 1}^{n-1} U(\varphi_i) \not = \emptyset$ for any choice of $\varphi_1, \ldots, \varphi_{n-1} \in \tests(\Omega)$, then also $\bigcap_{i = 1}^{n} U(\varphi_i) \not = \emptyset$ for any choice of $\varphi_1, \ldots, \varphi_{n} \in \tests(\Omega)$.
	If $\{\varphi_1, \ldots, \varphi_n\}$ are linearly dependent, 
	thanks to linearity of $L$, any $\L\in \bigcap_{i = 1}^{n-1} U(\varphi_i)$ satisfies $\L\in \bigcap_{i = 1}^n U(\varphi_i)$. 
	If $\{\varphi_1, \ldots, \varphi_n\}$ are linearly independent, let $f_n = (\ns{\varphi_n})_{|\dom}$, let $g_n = \ns{(L(\varphi_n)})_{|\dom}$ and let $\{f_n, b_{2}, \ldots, b_M\}$ be a $\ns{}$basis of $\grid{\dom}$.
	For any $\L\in \bigcap_{i = 1}^{n-1} U(\varphi_i)$, define $\overline{\L} : \grid{\dom} \rightarrow \grid{\dom}$ by
	$$
	\overline{\L}\left(a_1f_n + \sum_{i = 2}^M a_i b_i \right) =
	a_1 g_n + \L\left(\sum_{i = 2}^M a_i b_i\right)
	$$
	for all $a_1, \ldots a_M \in \hR$.
	Then $\overline{\L}\in \bigcap_{i = 1}^n U(\varphi_i)$.
	This concludes the proof.
\end{proof}

\begin{theorem}\label{teorema lift lineare}
	For every 
	linear $L: \tests'(\Omega) \rightarrow \tests'(\Omega)$ there is a $\hR$-linear $\L :\grid{\dom} \rightarrow \grid{\dom}$ such that 
	$\sh{\langle \L (f), \ns{\varphi} \rangle} = \ldual L [f], \varphi\rdual$ for all $f \in \test'(\dom)$ and for all $\varphi \in \tests(\Omega)$.
	As a consequence, the following diagram commutes:
	\begin{equation}\label{lift lineare}
	\begin{array}{ccc}
	\B & \stackrel{\L}{\longrightarrow} & \B \\
	\Phi \circ \pi \downarrow & & \downarrow \Phi \circ \pi\\
	\tests'(\Omega) & \stackrel{L}{\longrightarrow} & \tests'(\Omega).
	\end{array}
	\end{equation}
\end{theorem}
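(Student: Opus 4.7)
The plan is to construct $\L$ as the $\langle\cdot,\cdot\rangle$-adjoint (in the grid-function setting) of a lift of the standard adjoint of $L$. Let $\M: \tests(\Omega)\to\tests(\Omega)$ be the adjoint of $L$, characterized by $\ldual L(T),\varphi\rdual = \ldual T,\M(\varphi)\rdual$, and apply Lemma \ref{lemma lift duale} to $\M$ to obtain an $\hR$-linear $\mathcal{M}:\grid{\dom}\to\grid{\dom}$ with $\mathcal{M}(\ns{\varphi}_{|\dom}) = \ns{\M(\varphi)}_{|\dom}$ for every $\varphi\in\tests(\Omega)$. Inspecting the saturation step in the proof of Lemma \ref{lemma lift duale}, the sets $U(\varphi)$ are affine subspaces of the \emph{internal} set of internal $\hR$-linear endomorphisms of $\grid{\dom}$, cut out by the condition that a fixed internal vector be sent to a fixed internal vector; hence each $U(\varphi)$ is internal, and the $\mathcal{M}$ furnished by saturation may be taken internal.

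Since $\grid{\dom}$ is an internal hyperfinite-dimensional $\hR$-vector space and $\langle\cdot,\cdot\rangle$ is a non-degenerate internal bilinear form on it (indeed $\langle f,\varepsilon^{-k}\chi_y\rangle = f(y)$), transfer of the finite-dimensional result yields an internal $\hR$-linear adjoint $\L = \mathcal{M}^{\ast}$ satisfying
$$\langle \L(f),g\rangle = \langle f,\mathcal{M}(g)\rangle \qquad \text{for all } f,g\in\grid{\dom}.$$
The required commutation is then a short computation: for $f\in\test'(\dom)$ and $\varphi\in\tests(\Omega)$,
$$\sh{\langle \L(f),\ns{\varphi}_{|\dom}\rangle} = \sh{\langle f,\mathcal{M}(\ns{\varphi}_{|\dom})\rangle} = \sh{\langle f,\ns{\M(\varphi)}_{|\dom}\rangle} = \ldual [f],\M(\varphi)\rdual = \ldual L[f],\varphi\rdual,$$
using Theorem \ref{bello} for the third equality and the defining property of $\M$ for the last. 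In particular $\langle \L(f),\ns{\varphi}_{|\dom}\rangle$ is finite for every $\varphi\in\tests(\Omega)$, so by condition (3) of Lemma \ref{character} one has $\L(f)\in\test'(\dom)$, which makes the diagram \ref{lift lineare} well-defined.

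The step I expect to be most delicate is verifying that the $\mathcal{M}$ produced by Lemma \ref{lemma lift duale} can be chosen internal, since without internality the adjoint $\mathcal{M}^{\ast}$ is not available by transfer and the whole argument breaks. Once this is in hand, the rest is a formal duality calculation together with the dictionary of Theorem \ref{bello}, and — crucially — the preservation of $\test'(\dom)$ by $\L$ is not an additional hypothesis to check but falls out of the commutation relation via Lemma \ref{character}.
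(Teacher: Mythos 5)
Your proof is correct and follows essentially the same route as the paper: the paper also defines $\L$ as the adjoint (with respect to $\langle\cdot,\cdot\rangle$) of the operator $\M_\Lambda$ obtained by applying Lemma \ref{lemma lift duale} to the adjoint $\M$ of $L$, then runs the same chain of equalities and invokes Lemma \ref{character} to see that $\L$ maps $\B$ into $\B$. Your explicit check that the saturation argument in Lemma \ref{lemma lift duale} yields an \emph{internal} $\hR$-linear map, so that the adjoint exists by transfer, is a detail the paper leaves implicit (it simply declares $\langle \L(f),\varphi\rangle=\langle f,\M_\Lambda(\varphi)\rangle$ as a definition), and your treatment of it is accurate.
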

\begin{proof}	
	Let $\M$ be the adjoint of $L$, and let $\M_\Lambda$ be the $\hR$-linear operator coherent with $\M$ in the sense of Lemma \ref{lemma lift duale}.
	Define $\langle \L(f), \varphi \rangle = \langle f, \M_\Lambda(\varphi)\rangle$ for all $\varphi \in \test(\dom)$.
	From this definition, $\hR$-linearity of $\L$ can be deduced from the $\hR$-linearity of $\M_\Lambda$.
	
	We will now prove that $\L$ satisfies $\sh{\langle \L (f), \ns{\varphi} \rangle} = \ldual L [f], \varphi\rdual$ for all $f \in \test'(\dom)$ and for all $\varphi \in \tests(\Omega)$.
	Let $f \in \test'(\dom)$: for any $\varphi \in \tests(\Omega)$, thanks to Lemma \ref{lemma lift duale} we have the equalities
	$$
	\langle \L(f), \ns{\varphi} \rangle
	=
	\langle f, \M_\Lambda(\ns{\varphi}_{|\dom})\rangle
	=
	\langle f, \ns{\M(\varphi)}\rangle
	\sim
	\ldual [f], \M(\varphi)\rdual
	=
	\ldual L[f], \varphi\rdual,
	$$
	as we wanted.
	
	By Lemma \ref{character} and thanks to the previous equality, if $f \in \B$, then $\L(f) \in \B$. 
	This concludes the proof of the commutativity of diagram \ref{lift lineare}.
\end{proof}

From the previous Theorem, we obtain some sufficient conditions that ensure the equivalence between the linear problem \ref{lift2} in the sense of distributions and the linear problem \ref{lift1} in the sense of grid functions.

\begin{theorem}\label{theorem lift lineare}
	Let $L : \tests'(\Omega) \rightarrow \tests'(\Omega)$ be linear, and let $\L : \grid{\dom} \rightarrow \grid{\dom}$ any function such that diagram \ref{lift lineare} commutes.
	Let also $T \in \tests'(\Omega)$.
	Then problem	
	\ref{lift2}
	has a solution if and only if  problem
	\ref{lift1}
	has a solution $u \in \B$ for some $\f$ satisfying $[\f] = T$.
\end{theorem}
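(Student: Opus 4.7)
The proof should be essentially a direct chase of the commutative diagram \ref{lift lineare}, splitting into the two implications of the ``if and only if''.

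For the forward direction, the plan is to assume that problem \ref{lift2} admits a solution $u \in \tests'(\Omega)$, i.e.\ $L(u) = T$. By the surjectivity of $\Phi$ established in Theorem \ref{bello}, I can pick a representative $\u \in \B$ with $[\u] = u$. Setting $\f = \L(\u)$, I note that $\f \in \B$ because $\L$ maps $\B$ to $\B$ by hypothesis (this is built into the commutativity of diagram \ref{lift lineare}). Commutativity then gives $[\f] = [\L(\u)] = L([\u]) = L(u) = T$, so $\u$ is a grid function solution of $\L(\u) = \f$ with $[\f] = T$.

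For the converse, suppose there exist $u \in \B$ and $\f \in \B$ with $[\f] = T$ satisfying $\L(u) = \f$ pointwise on $\dom$. Applying the identity map under $\equiv$ first and then invoking commutativity of diagram \ref{lift lineare}, I get $L([u]) = [\L(u)] = [\f] = T$, so $[u] \in \tests'(\Omega)$ solves problem \ref{lift2}.

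Both directions use only (i) surjectivity of $\Phi : \B/\equiv \to \tests'(\Omega)$ from Theorem \ref{bello} and (ii) the commutativity of diagram \ref{lift lineare}, which is part of the hypothesis on $\L$. There is no genuine obstacle here: the statement is the ``obvious'' consequence of the fact that $\L$ is, by construction, a lift of $L$ along the projection $\Phi \circ \pi$, and the existence of such a lift $\L$ (which would otherwise be the main content) is already granted by Theorem \ref{teorema lift lineare}. The only mild subtlety worth mentioning in the write-up is that problem \ref{lift1} requires pointwise equality in $\grid{\dom}$, which is stronger than $\equiv$-equality; but this is precisely why the statement allows us to choose the right-hand side $\f$ freely within the equivalence class of $T$, so no approximation argument is needed.
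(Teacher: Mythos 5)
Your proposal is correct and follows essentially the same route as the paper: both directions are a diagram chase using the surjectivity of $\Phi$ from Theorem \ref{bello} (to pick a representative $u \in \Phi^{-1}$ of the distributional solution and set $\f = \L(u)$) together with the commutativity of diagram \ref{lift lineare}. Your remark that pointwise equality in \ref{lift1} is handled precisely because $\f$ may be chosen freely within the $\equiv$-class of $T$ matches the paper's choice $\f = \L(u)$.
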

\begin{proof}
	By Theorem \ref{teorema lift lineare}, if problem \ref{lift1} has a solution $u$, then $[u]$ satisfies problem \ref{lift2}.
	
	The other implication is a consequence of Theorem \ref{teorema lift lineare} and of surjectivity of $\Phi$: suppose that \ref{lift2} has a solution $v$.
	The commutativity of diagram \ref{lift lineare} ensures that for any $u \in \Phi^{-1}(v)$ it holds $[\L (u)] = T$, hence for $\f = \L(u)$ problem \ref{lift1} has a solution.
\end{proof}

Thanks to this equivalence result, any linear PDE can be studied in the setting of grid functions with the techniques from linear algebra.

As an example of the grid function formulation of a linear PDE, we find it useful to discuss the Dirichlet problem.

\begin{definition}\label{def dirichlet}
	Let $\Omega \subset \R^k$ be open and bounded, $h \in \N$, $a_{\alpha, \beta} \in C^\infty(\Omega)$, and let
	$$
	L(v) = \sum_{0\leq|\alpha|,|\beta|\leq h} (-1)^{|\alpha|}D^{\alpha}(a_{\alpha, \beta}D^\beta v).
	$$
	The Dirichlet problem is the problem of finding $v$ satisfying
	\begin{equation}\label{dirichlet}
	\left\{
	\begin{array}{l}
	L(v) = f \text{ in } \Omega\\
	D^{\alpha}u = 0 \text{ for } |\alpha|\leq h-1 \text{ in } \partial\Omega.
	\end{array}\right.
	\end{equation}
	If $f \in C_b(\Omega)$, then $v$ is a classical solution of the Dirichlet problem if
	\begin{equation}\label{classi dirichlet}
	v \in C^{2h}_b(\Omega) \cap C^{2h-1}_b(\overline{\Omega}) \text{ and } L(v) = f.
	\end{equation}
	If $f \in L^2(\Omega)$, then $v$ is a strong solution of the Dirichlet problem if
	$$
	v \in H^{2h}(\Omega) \cap H^{h}_0(\overline{\Omega}) \text{ and } L(v) = f \text{ a.e.}
	$$
	If $f \in H^{-h}(\Omega)$, then $v$ is a weak solution of the Dirichlet problem if
	\begin{equation}\label{weak dirichlet}
	v \in H^{h}_0(\Omega) \text{ and }
	\sum_{0\leq|\alpha|,|\beta|\leq h} \int a_{\alpha, \beta}D^\beta v D^\alpha w = f(w)
	\text{ for all } w \in H^h_0(\Omega).
	\end{equation}

\end{definition}

\begin{definition}
	A grid function formulation of the Dirichlet problem \ref{dirichlet} is the following: let
	$$
	\L(u) = \sum_{0\leq|\alpha|,|\beta|\leq h} (-1)^{|\alpha|}\D^{\alpha}(\ns{a}_{\alpha, \beta}\D^\beta u).
	$$
	The Dirichlet problem is the problem of finding $u \in \grid{\dom}$ satisfying
	\begin{equation}\label{grid dirichlet}
	\left\{
	\begin{array}{l}
	\L(u) = P(\ns{f}) \text{ in } \dom\\
	\D^{\alpha}u = 0 \text{ in } \partial_\Lambda^\alpha\dom\text{ for } |\alpha|\leq s-1.
	\end{array}\right.
	\end{equation}
\end{definition}

Notice that equation \ref{grid dirichlet} is satisfied in the sense of grid functions, i.e.\ pointwise, while equation \ref{dirichlet} assumes the different meanings shown in Definition \ref{def dirichlet}.

A priori, a solution $u$ of problem \ref{lift1} induces a solution $[u]$ of problem \ref{lift2} in the sense of distributions.
However, if $[u]$ is more regular, it is a solution to \ref{lift2} in a stronger sense.

\begin{theorem}\label{teorema dirichlet}
	Let $u$ be a solution of problem \ref{grid dirichlet}.
	Then
	\begin{enumerate}
		\item if $f \in C_b(\Omega)$ and $[u] \in C^{2h}_b(\Omega) \cap C^{2h-1}_b(\overline{\Omega})$, then $[u]$ is a classical solution of the Dirichlet problem;
		\item if $f \in L^2(\Omega)$ and $[u] \in H^{2h}(\Omega) \cap H^{h}_0(\Omega)$, then $[u]$ is a strong solution of the Dirichlet problem;
		\item if $f \in H^{-h}(\Omega)$ and $[u] \in H^{h}_0(\Omega)$, then $[u]$ is a weak solution of the Dirichlet problem, i.e.\ $[u]$ satisfies \ref{weak dirichlet}.		
	\end{enumerate}
\end{theorem}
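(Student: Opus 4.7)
My plan is to take equivalence classes on both sides of the pointwise grid equation $\L(u) = P(\ns f)$ to obtain the distributional identity $L[u] = f$ in $\tests'(\Omega)$, and then upgrade that identity to the appropriate notion of solution based on the regularity hypothesis on $[u]$ in each of the three cases.

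The first key step is the commutativity $[\L(u)] = L[u]$. Iterating Theorem \ref{teorema equivalenza derivate2} yields $[\D^\alpha g] = D^\alpha[g]$ for $g \in \B$ and any standard multi-index $\alpha$. A short computation with the definition of $\Phi$ shows that multiplication by the nonstandard extension of a smooth coefficient also passes through $\Phi$: for $g \in \B$, $a \in C^\infty(\Omega)$, and $\varphi \in \tests(\Omega)$ we have $a\varphi \in \tests(\Omega)$, whence
\[
\sh{\langle \ns{a}\,g, \ns\varphi\rangle} = \sh{\langle g, \ns{(a\varphi)}\rangle} = \ldual [g], a\varphi\rdual = \ldual a\,[g], \varphi\rdual,
\]
so $[\ns{a}\,g] = a[g]$. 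Combining these two facts termwise in the definition of $\L$ gives $[\L(u)] = L[u]$. On the right-hand side, Lemma \ref{questo corollario} yields $[P(\ns f)] = f$ when $f \in L^2(\Omega)$ (which covers cases (1) and (2)), and in case (3) one reinterprets the right-hand side of \ref{grid dirichlet} via the embedding $l$ of Theorem \ref{schwartz} so that its equivalence class is again $f$. Hence $L[u] = f$ in $\tests'(\Omega)$.

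Next I would handle the boundary condition. In case (1), Proposition \ref{topologia bella} gives, for every $x \in \partial\Omega$ and every $|\alpha| \leq h-1$, a grid point $y \in \partial_\Lambda^\alpha\dom$ with $y \sim x$. Since $[u] \in C^{2h-1}_b(\overline{\Omega})$ makes $D^\alpha[u]$ continuous up to the boundary and $D^\alpha[u] = [\D^\alpha u]$, an S-continuity argument combined with the pointwise grid condition $\D^\alpha u(y) = 0$ yields $D^\alpha[u](x) = 0$. In cases (2) and (3), the hypothesis $[u] \in H_0^h(\Omega)$ already encodes the boundary condition, so no transfer is required. Finally, the distributional identity $L[u] = f$ is upgraded as follows: in case (1), both sides are continuous, so distributional equality forces pointwise equality, giving a classical solution \ref{classi dirichlet}; in case (2), $[u] \in H^{2h}(\Omega)$ makes $L[u]$ an $L^2$ function, so equality holds almost everywhere; in case (3), the identity on $\tests(\Omega)$ extends to all $w \in H^h_0(\Omega)$ by density together with continuity of the bilinear form, yielding \ref{weak dirichlet}.

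The main obstacle I anticipate is the transfer of the boundary condition in case (1): the hypothesis $[u] \in C^{2h-1}_b(\overline{\Omega})$ is a property of the equivalence class and not a priori of the particular grid function $u$, so one must carefully rule out oscillations in $u$ near $\partial_\Lambda\dom$ that are invisible to $\equiv$ but prevent pointwise evaluation of $\D^\alpha u$ from agreeing with the classical boundary trace of $D^\alpha[u]$. A secondary concern is the precise interpretation of $P(\ns f)$ for $f \in H^{-h}(\Omega)$ in case (3), since the $\ns{L^2}$ projection was defined only on $\ns{L^2}$, and one needs to bridge this by the embedding $l$ before the commutativity argument can close the loop.
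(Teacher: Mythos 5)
Your proposal is correct and follows essentially the same route as the paper: the paper's proof also pushes the pointwise identity $\L(u)=\lpi(\ns{f})$ through $\Phi\circ\pi$ term by term, using Theorem \ref{teorema equivalenza derivate2} together with the identity $[\ns{a}\,g]=a[g]$ for smooth coefficients, and then lets the regularity hypothesis on $[u]$ upgrade the distributional identity $L[u]=f$; the only cosmetic difference is that the paper identifies $[\lpi(\ns{f})]=f$ via Lemma \ref{proiez s-c} in case (1) and Lemma \ref{corollario lp} in cases (2)--(3), where you invoke Lemma \ref{questo corollario}.

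Two remarks on the obstacles you anticipate. The boundary-condition transfer in case (1) is not actually needed: in Definition \ref{def dirichlet} the notions of classical, strong and weak solution are formalized by \ref{classi dirichlet}, by the a.e.\ identity in $H^{2h}(\Omega)\cap H^h_0(\Omega)$, and by \ref{weak dirichlet}, so the boundary data are carried entirely by the regularity hypotheses imposed on $[u]$ in the statement, and the paper's proof accordingly never transfers the grid condition $\D^{\alpha}u=0$ on $\partial_\Lambda^\alpha\dom$ to a boundary trace of $D^\alpha[u]$; your worry about oscillations of $u$ near $\partial_\Lambda\dom$ is mathematically sound, but it concerns a claim the theorem does not make. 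Your second concern, by contrast, is well taken: for $f\in H^{-h}(\Omega)$ the right-hand side $\lpi(\ns{f})$ of \ref{grid dirichlet} is not literally defined, since $\lpi$ acts on $\ns{L^2}(\lp{\Omega})$; the paper passes over this silently (``the proof of parts (2) and (3) is similar''), and your suggestion of replacing it by a representative obtained from the embedding $l$ of Theorem \ref{schwartz}, or more generally by any $f_\Lambda\in\B$ with $[f_\Lambda]=f$, is the natural repair.
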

\begin{proof}
	A solution $u$ of problem \ref{grid dirichlet} satisfies the equality
	\begin{eqnarray*}
		\langle P(\ns{f}), \varphi \rangle
		&=& \sum_{0\leq|\alpha|,|\beta|\leq h} (-1)^{|\alpha|} \langle \D^{\alpha}(\ns{a}_{\alpha, \beta}\D^\beta u), \varphi \rangle \\
		&=& \sum_{0\leq|\alpha|,|\beta|\leq h} \langle \ns{a_{\alpha, \beta}} \D^\beta u, \D^{\alpha}\varphi \rangle.
	\end{eqnarray*}
	for all $\varphi \in \test'(\dom)$.
	
	We will now prove (1).
	If $f \in C_b(\Omega)$, then by Lemma \ref{proiez s-c}, $[P(\ns{f})]=f$.
	By Theorem \ref{teorema equivalenza derivate2}, $[\D^{\beta}u] = D^{\beta}[u]$, and $[\ns{a}_{\alpha, \beta}\D^\beta u] = a_{\alpha, \beta} D^{\beta} [u]$, so that
	$$
	[(-1)^{|\alpha|}\D^{\alpha}(\ns{a}_{\alpha, \beta}\D^\beta u)]
	=
	(-1)^{|\alpha|}D^{\alpha}(a_{\alpha, \beta}D^\beta [u]).
	$$
	We deduce that $[u]$ satisfies equation \ref{classi dirichlet} in the classical sense, as desired.
	
	The proof of parts (2) and (3) is similar to that of part (1).
	The only difference is that it relies on Lemma \ref{corollario lp} instead of Lemma \ref{proiez s-c}.
\end{proof}

\begin{remark}
	While Theorem \ref{teorema lift lineare} and Theorem \ref{theorem lift lineare} do not explicitly determine an extension $\L$ for a given linear PDE, they determine a sufficient condition for problem \ref{lift1} to be a coherent representation of problem \ref{lift2} in the sense of grid function.
	In the practice, an explicit extension $\L$ of a linear $L : \tests'(\Omega) \rightarrow \tests'(\Omega)$ can be determined from $L$ by taking into account that
	\begin{itemize}
		\item thanks to Theorem \ref{teorema equivalenza derivate2}, derivatives can be replaced by finite difference operators;
		\item shifts can be represented in accord to Corollary \ref{corollario shift};
		\item if $a \in C^\infty(\Omega)$, then $[\ns{a}f]=a[f]$ for all $f \in \B$, since for all $\varphi \in \test(\dom)$, $\ns{a}\varphi \in \test(\dom)$, and we have the equalities
		$$
		\sh{\langle \ns{a} f, \varphi \rangle} = \sh{\langle f, \ns{a}\varphi \rangle} = \ldual [f], a\varphi\rdual = \ldual a[f], \varphi\rdual.
		$$
	\end{itemize}
	
	Similarly, we have not established a canonical representative $\f$ for $T$.
	However, observe that for all $g \in \grid{\dom}$ and for all $x \in \dom$ it holds
	$$
	g(x) = \sum_{y \in \dom} g(y) N^k \chi_{y}(x)
	$$
	Moreover, $\chi_{y}(x) = \chi_0(x-y)$, so that once a solution $u_0$ for the problem $\L u = N^k\chi_0$ is determined, a solution for $\L (u) = g$ can be determined from the above equality by posing
	\begin{equation}\label{convo}
	u_{g}(x) = \sum_{y \in \dom} g(y) u_0(x-y).
	\end{equation}
	In fact, by linearity of $\L$ we have that, for all $x \in \dom$,
	\begin{eqnarray*}
		\L(u_g(x)) &=& \displaystyle \L \left( \sum_{y \in \dom} g(y) u_0(x-y) \right)\\ \\
		& = & \displaystyle \sum_{y \in \dom} g(y) \L(u_0(x-y))\\
		& = & \displaystyle\sum_{y \in \dom} g(y) N^k \chi_{0}(x-y)\\
		& = & g(x).
	\end{eqnarray*}
	In particular, $u_0$ plays the role of a fundamental solution for problem \ref{lift1}, while equality \ref{convo} can be interpreted as the discrete convolution between $g$ and $u_0$.
	As a consequence, the study of a linear problem \ref{lift1} can be carried out by determining the solutions to the problem $\L (u) = N^k\chi_0$.
\end{remark}

\subsection{The grid function formulation of nonlinear PDEs}\label{section nonli}

A nonlinear PDE can be written in the most general form as
$$
F(u) = f,
$$
usually with $u \in V \subseteq L^2(\Omega)$ and $F: V \rightarrow W \subseteq L^2(\Omega)$.
As in the linear case, the grid function formulation of nonlinear problems is based upon the possibility to coherently extend every continuous $F : L^2(\Omega) \rightarrow L^2(\Omega)$ to all of $\grid{\dom}$.
Since the proofs of the following theorems are based upon Lemma \ref{norma r}, we will impose the additional hypothesis that the Lebesgue measure of $\Omega$ is finite.
Notice that, in contrast to what happened for Theorem \ref{teorema lift lineare}, in the proof of Theorem \ref{teorema lifting nonlineari}, we will be able to explicitly determine a particular extension $\F$ for a given continuous $F : L^2(\Omega) \rightarrow L^2(\Omega)$.

\begin{theorem}\label{teorema lifting nonlineari}
	Let $\leb(\Omega) < +\infty$ and let $F : L^2(\Omega) \rightarrow L^2(\Omega)$ be continuous.
	Then there is a function $\F : \grid{\dom} \rightarrow \grid{\dom}$ that satisfies
	\begin{enumerate}
		\item whenever $u, v \in \grid{\dom}$ are nearstandard in $L^2(\Omega)$, $\norm{u-v}_2 \sim 0$ implies $\norm{\F(u)-\F(v)}_2 \sim 0$;
		\item for all $f \in L^2(\Omega)$,
		$[\F(\lpi(\ns{f}))] = F(f)$.
	\end{enumerate}
\end{theorem}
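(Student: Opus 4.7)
The plan is to construct $\F$ explicitly by the formula
$$
\F(u) = \lpi(\ns{F}(\lp{u})),
$$
where $\lp{u}$ is the piecewise-constant representative of $u$ in $\ns{L^2}(\Omega)$ introduced in Subsection~\ref{sub lp}. The key analytic input will be the transferred continuity of $F$: for any standard $f \in L^2(\Omega)$ and any $g \in \ns{L^2}(\Omega)$, if $\norm{g - \ns{f}}_2 \sim 0$ then $\norm{\ns{F}(g) - \ns{F(f)}}_2 \sim 0$. This is a routine transfer argument, since the standard $(\varepsilon,\delta)$-continuity of $F$ at $f$ transfers verbatim and an infinitesimal distance is dominated by every standard $\delta > 0$.

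For assertion (2), I fix $f \in L^2(\Omega)$. By Lemma~\ref{norma r}, $\norm{\ns{f} - \lpi(\ns{f})}_2 \sim 0$, so transferred continuity at $f$ gives $\norm{\ns{F}(\lpi(\ns{f})) - \ns{F(f)}}_2 \sim 0$. Since $\lpi$ is a contraction in the $\ns{L^2}$ norm, this yields
$$
\norm{\F(\lpi(\ns{f})) - \lpi(\ns{F(f)})}_2 \sim 0.
$$
Corollary~\ref{corollario lp} then gives $[\F(\lpi(\ns{f}))] = [\lpi(\ns{F(f)})]$, and Lemma~\ref{questo corollario} identifies the right-hand side with $F(f)$, yielding (2).

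For assertion (1), I take $u, v \in \grid{\dom}$ nearstandard in $L^2(\Omega)$ with $\norm{u-v}_2 \sim 0$. By the remark following Lemma~\ref{norma r}, $[u], [v] \in L^2(\Omega)$ with $\norm{u - \lpi(\ns{[u]})}_2 \sim 0$ (and similarly for $v$). Corollary~\ref{corollario lp} applied to $\norm{u-v}_2 \sim 0$ gives $[u] = [v]$ as distributions, hence as $L^2$ functions; set $g := [u] = [v]$. Combining with Lemma~\ref{norma r} then yields $\norm{u - \ns{g}}_2 \sim 0$ and $\norm{v - \ns{g}}_2 \sim 0$. Transferred continuity of $F$ at the standard point $g$, together with the triangle inequality, produces $\norm{\ns{F}(\lp{u}) - \ns{F}(\lp{v})}_2 \sim 0$, and applying the contraction $\lpi$ delivers the desired $\norm{\F(u) - \F(v)}_2 \sim 0$.

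The main obstacle I anticipate is the careful handling of the transferred continuity: it is only available at standard points $f \in L^2(\Omega)$, which is precisely why both assertions are framed exclusively for nearstandard inputs, and this restriction seems essential to close the argument. All remaining steps are routine manipulations exploiting that $\lpi$ is a linear contraction and that the projection $\Phi \circ \pi$ from Theorem~\ref{bello} is trivial on $\ns{L^2}$-infinitesimals via Corollary~\ref{corollario lp}.
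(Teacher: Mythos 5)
Your proposal is correct and follows essentially the same route as the paper: you take the same explicit candidate $\F(u) = \lpi(\ns{F}(\lp{u}))$, prove (2) via Lemma \ref{norma r}, the transferred continuity of $F$ at the standard point $f$, and Lemma \ref{questo corollario}, and prove (1) by reducing to a common standard approximant and using that $\lpi$ does not increase the $\ns{L^2}$ norm. Your explicit appeals to the contraction property of $\lpi$ and to Corollary \ref{corollario lp} merely spell out steps the paper leaves implicit.
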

\begin{proof}
	We will show that the function defined by
	$\F(u) = \lpi(\ns{F(\lp{u})})$ for all  $u \in \grid{\dom}$ satisfies the thesis.
	By continuity of $F$, whenever $u$ and $v$ are nearstandard in $L^2(\Omega)$ we have
	$$
	\norm{u-v}_2 \sim 0 \text{ implies } \norm{\ns{F(u)}-\ns{F(v)}}_2 \sim 0,
	$$
	and, by Lemma \ref{norma r},
	$$
	\norm{\ns{F(u)}-\ns{F(v)}}_2 \sim 0 \text{ implies } \norm{\F(u)-\F(v)}_2 \sim 0,
	$$
	hence (1) is proved. 
	
	We will now prove that $[\F(\lpi(\ns{f}))] = F(f)$.
	By Lemma \ref{norma r}, we have $\norm{\ns{f}-\lpi(\ns{f})}_2 \sim 0$
	and, by continuity of $\ns{F}$, $\norm{\ns{F}(\ns{f})-\ns{F}(\lpi(\ns{f}))}_2 \sim 0$.
	From Lemma \ref{questo corollario} we have $[\F(\ns{f})] = [\lpi(\ns{F(\ns{f})})] = F(f)$, as desired.
\end{proof}

\begin{remark}\label{remark 1}
	In the same spirit, if $F: V \rightarrow W$ is continuous and the space of grid functions can be continuously embedded in $\ns{V}$ and $\ns{W}$, then one can prove similar theorems by varying condition (1) in order to properly represent the topologies on the domain and the range of $F$.
	For instance, if $F: H^1(\Omega) \rightarrow L^2(\Omega)$, then (1) would be replaced by
	$$\norm{u-v}_{H^1} \sim 0 \text{ implies } \norm{\F(u)-\F(v)}_2 \sim 0,$$
	where $\norm{u-v}_{H^1}$ is defined in the expected way as
	$$\norm{u-v}_{H^1} = \norm{u-v}_{2}+\norm{\grad (u-v)}_2.$$
\end{remark}

Condition (1) of Theorem \ref{teorema lifting nonlineari} is a continuity requirement for $\F$, and condition (2) implies coherence of $\F$ with the original function $F$, so that
theorem \ref{teorema lifting nonlineari} ensures that for all continuous $F : L^2 \rightarrow L^2$ there is a function $\F:\grid{\dom}\rightarrow \grid{\dom}$ which is continuous and coherent with $F$.
This result allows to formulate nonlinear PDEs in the setting of grid functions.

\begin{theorem}\label{nonlinear pdes}
	Let $\leb(\Omega) < +\infty$, let $F:L^2(\Omega)\rightarrow L^2(\Omega)$ and let $\F : \grid{\dom} \rightarrow \grid{\dom}$ satisfy conditions (1) and (2) of Theorem \ref{teorema lifting nonlineari}.
	Let also $f \in L^2(\Omega)$.
	Then the problem of finding $v \in L^2(\Omega)$ satisfying
	\begin{equation}\label{eq nonli}
	F(v) = f
	\end{equation}
	has a solution if and only if there exists a solution $u \in \grid{\dom}$, $u$ nearstandard in $L^2(\Omega)$, that satisfy
	\begin{equation}\label{grid nonli}
	\F(u) = f_\Lambda
	\end{equation}
	for some $f_\Lambda\in\grid{\dom}$ with $[f_\Lambda]=f$, and in particular for $f_\Lambda = P(\ns{f})$.
\end{theorem}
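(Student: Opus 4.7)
The strategy is to use $v:=[u]$ as the witness in the backward direction and $u:=P(\ns{v})$ in the forward direction, combining the continuity property (1) and the coherence property (2) of $\F$ from Theorem \ref{teorema lifting nonlineari} with the characterization of $L^2$-nearstandardness stated just after the definition: under $\leb(\Omega)<+\infty$, a grid function $u\in\grid{\dom}$ is nearstandard in $L^2(\Omega)$ iff $[u]\in L^2(\Omega)$ and $\norm{u-P(\ns{[u]})}_2 \sim 0$.

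For the backward direction, suppose $u$ is nearstandard in $L^2(\Omega)$ and $\F(u)=f_\Lambda$ with $[f_\Lambda]=f$. I would set $v:=[u]\in L^2(\Omega)$; by the above characterization $\norm{u-P(\ns{v})}_2\sim 0$, so property (1) of $\F$ yields $\norm{\F(u)-\F(P(\ns{v}))}_2\sim 0$, hence $\norm{f_\Lambda-\F(P(\ns{v}))}_2\sim 0$. Corollary \ref{corollario lp} then gives $f_\Lambda\equiv \F(P(\ns{v}))$, so $[\F(P(\ns{v}))]=[f_\Lambda]=f$; combined with property (2) this reads $F(v)=[\F(P(\ns{v}))]=f$, as required.

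For the forward direction, given $v\in L^2(\Omega)$ with $F(v)=f$, I would set $u:=P(\ns{v})$, which is trivially nearstandard in $L^2(\Omega)$. Then $f_\Lambda:=\F(u)$ satisfies $\F(u)=f_\Lambda$ pointwise, and property (2) gives $[f_\Lambda]=F(v)=f$, which settles the general existence statement. For the specialization to $f_\Lambda=P(\ns{f})$, I would invoke the explicit form $\F(u)=P(\ns{F}(\lp{u}))$ used in the proof of Theorem \ref{teorema lifting nonlineari}: Lemma \ref{norma r} gives $\norm{\lp{P(\ns{v})}-\ns{v}}_2\sim 0$, the transferred continuity of $F$ forces $\norm{\ns{F}(\lp{P(\ns{v})})-\ns{f}}_2\sim 0$, and the non-expansiveness of the $\ns{L^2}$ projection $P$ then yields $\norm{\F(u)-P(\ns{f})}_2\sim 0$. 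Corollary \ref{corollario lp} concludes $\F(u)\equiv P(\ns{f})$, so up to the natural equivalence the choice $f_\Lambda=P(\ns{f})$ is realized by the same $u$.

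The main obstacle is the mismatch between strict pointwise equality $\F(u)=f_\Lambda$ and the $L^2$-infinitesimal approximation actually delivered by the argument in the ``in particular'' clause: the claim $\F(u)=P(\ns{f})$ must be read modulo $\equiv$ (equivalently, within an $L^2$-monad), which is the natural notion on the grid function side. Once that interpretation is fixed, everything else is a direct bookkeeping exercise with (1), (2), Lemma \ref{norma r}, Corollary \ref{corollario lp}, and the characterization of $L^2$-nearstandardness.
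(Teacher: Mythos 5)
Your proof is correct and follows essentially the same route as the paper: the forward direction takes $u=P(\ns{v})$ and invokes property (2), while your backward direction is exactly the chain the paper compresses into the terse claim that condition (2) yields $[\F(u)]=F([u])$, namely nearstandardness of $u$, property (1), Corollary \ref{corollario lp}, and then property (2). Your closing remark about the clause $f_\Lambda=P(\ns{f})$ is also on target: the paper's own forward argument only produces $f_\Lambda=\F(P(\ns{v}))$ with $[f_\Lambda]=f$ and never achieves pointwise equality with $P(\ns{f})$, so that clause must indeed be read modulo $\equiv$ (or for the backward implication only), as you observe.
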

\begin{proof}
	Suppose that \ref{grid nonli} with $f_\Lambda = \lpi(\ns{f})$ has a solution $u$.
	Since $[\lpi(\ns{f})] = f$ by Corollary \ref{questo corollario}, $u$ satisfies the equality
	$[\F(u)] = f$
	in the sense of distributions.
	At this point, if $u$ is nearstandard in $L^2(\Omega)$, by Lemma \ref{norma r} we have $\norm{\ns{[u]} - u}_2 \sim 0$, so that $[u] \in L^2(\Omega)$, and condition (2) of Theorem \ref{teorema lifting nonlineari} ensures that $[\F(u)] = F([u])$, so that $[u]$ is a solution of \ref{eq nonli}.
	
	For the other implication, suppose that $v$ is a solution to \ref{eq nonli}.
	Then, by condition (2) of Theorem \ref{teorema lifting nonlineari}, $[\F(\lpi(\ns{v}))] = F(v)=f$, so that problem \ref{grid nonli} has a solution.
\end{proof}

If $u$ is a solution to \ref{grid nonli} but it is not nearstandard in $L^2(\Omega)$, i.e.\ if $\norm{\ns{[u]} - u}_2 \not \sim 0$, $[\F(u)]$ needs not be equal to $F([u])$.
In fact, if $[u] \in L^2(\Omega)$ and $\norm{\ns{[u]} - u}_2 \not \sim 0$, we have argued in Section \ref{sub lp} that we expect $u$ to feature either strong oscillations or concentrations.
Due to these irregularities, we have no reasons to expect that $[\F(u)](x)$, that represents the mean of the values assumed by $\F(u)$ at points infinitely close to $x$, is related to $F([u])(x)$, that represents the function $F$ applied to the mean of the values assumed by $u$ at points infinitely close to $x$.
However, as we have seen in Section \ref{young}, if $\norm{u}_\infty \in \fin$, then $u$ can be interpreted as a Young measure $\nu^u$.
If the composition $F(\nu^u)$ is defined in the sense of equation \ref{young equivalence equation}, then $\nu^u$ satisfies
$$
\int_{\Omega} \int_{\R} F(\tau) d\nu^u(x) \varphi(x) dx = \sh{ \langle \F(u), \varphi \rangle}
= \sh{\langle \lpi{(\ns{f})}, \varphi\rangle}
=
\int_{\Omega} f\varphi dx
$$
for all $\phi \in \tests'(\Omega)$, and can be regarded as a Young measure solution to equation \ref{eq nonli}.
In particular, since Young measures describe weak-$\star$ limits of sequences of $L^\infty$ functions, the relation between $F(\nu^u)$ and problem \ref{eq nonli} is the following: there exists a family of regularized problems
$$
F_\eta(u) = f_\eta
$$
and a family $\{u_\eta\}_{\eta > 0}$ of $L^2(\Omega)\cap L^\infty(\Omega)$ solutions of these problems such that $\nu^u$ represents the weak-$\star$ limit of a subsequence of $\{u_\eta\}_{\eta > 0}$, and $F(\nu^u)$ is the corresponding weak limit of the sequence $\{F(u_\eta)\}_{\eta > 0}$.

In the case that $\norm{u}_\infty$ is infinite or that $[u] \not \in L^2(\Omega)$, we consider $u$ as a generalized solution of problem \ref{eq nonli} in the sense of grid functions.
Moreover, we expect $u$ to capture both the oscillations and the concentrations we would expect from a sequence of solutions of some family of regularized problems of \ref{eq nonli}.
A more in-depth example of this behaviour is discussed in the grid function formulation of a class of ill-posed PDEs in \cite{illposed}.

\begin{remark}\label{remark 2}
	Notice that if $\F$ satisfies the stronger continuity hypothesis
	\begin{equation}\label{weak continuity}
	u \equiv v \text{ implies } \F(u) \equiv \F(v),
	\end{equation}
	then $\F$ has a standard part $\tilde{F}$ defined by
	$$
	\tilde{F}(g) = [\F(P(\ns{g}))]
	$$
	for any $g \in L^2(\Omega)$.
	Moreover, from Lemma \ref{norma r} and from Theorem \ref{teorema lifting nonlineari}, we deduce that $\tilde{F} = F$.
	As a consequence, any grid function $u$ that satisfies $\F(u) = P(\ns{f})$ induces a solution to problem \ref{eq nonli}.
	
	However, the continuity condition \ref{weak continuity} holds only for very regular functions, and it fails for many of the 
	functions that still satisfy the hypotheses of Theorem \ref{teorema lifting nonlineari}.
\end{remark}

\begin{remark}
	If the function $F$ appearing in equation \ref{eq nonli} can be expressed as
	$$
	F = L \circ G,
	$$
	where $G$ is nonlinear and $L$ is linear,
	the equivalence between the standard notions of solutions for the PDE \ref{eq nonli} and one of its formulations in the sense of grid functions can be obtained by a suitable combination of the results of Theorem \ref{theorem lift lineare} and of Theorem \ref{nonlinear pdes}.
\end{remark}

\subsection{Time dependent PDEs}

Time dependent PDEs have been studied in the setting of nonstandard analysis by a variety of means.
A possibility is to give a nonstandard representation of a given time dependent PDE by discretizing in time as well as in space, and by defining a standard solution to the original problem by the technique of stroboscopy.
In \cite{imme1}, van den Berg showed how the stroboscopy technique can be extended to the study of a class of partial differential equations of the first and the second order by imposing additional regularity hypotheses on the time-step of the discretization.
For an in-depth discussion on the stroboscopy technique and its applications to partial differential equations, we remand to \cite{imme1,petite,sari}.

A delicate point in the time discretization of PDEs is that the discrete time step cannot be chosen arbitrarily. In fact, it is often the case that the time-step of the discretization must be chosen in accord to some bounds that depend upon the specific problem.
As an example, consider the nonstandard model for the heat equation discussed in \cite{watt}, where the time-step is dependent upon the diameter of the grid and upon the diffusion coefficients.
In general, if the discrete timeline $\mathbb{T}$ is a deformation of the grid $\Lambda$, then the finite difference in time does not generalize faithfully the partial difference in time, and Theorem \ref{teorema equivalenza derivate} fails.
However, it is possible to determine sufficient conditions over $\mathbb{T}$ that imply the existence of $k \in \N$ such that Theorem \ref{teorema equivalenza derivate} holds for derivatives up to order $k$.
This study has been carried out in depth by van den Berg in \cite{imme2}.

Another possible approach to time-dependent PDEs is to follow the idea of Capi\'{n}sky and Cutland in \cite{capicutland1, capicutland statistic} and subsequent works: the authors did not discretize in time, but instead worked with functions defined on $\hR \times \Lambda^k$, where the first variable represents time, and the other $k$ variables represent space.
Following this idea, we formulate the problem
\begin{equation}\label{time dependent standard}
u_t - Fu = f
\end{equation}
with $u : \R \rightarrow V \subseteq L^2(\Omega)$, $F : V \rightarrow W \subseteq L^2(\Omega)$ by the grid function problem
\begin{equation}\label{time dependent grid}
u_t - \F u = f_\Lambda
\end{equation}
with $u : \hR \rightarrow \grid{\dom}$, with $[f_\Lambda] = f$, and where $\F$ is a suitable extension of $F$ in the sense of Theorems \ref{teorema lift lineare} and \ref{teorema lifting nonlineari}.
Notice that, by Theorem \ref{teorema equivalenza derivate2} and by Theorem \ref{teorema lift lineare}, the grid function formulation of a time dependent PDE is formally a hyperfinite system of ordinary differential equations, and it can be solved by exploiting the standard theory of dynamical systems.

Once we have a grid function formulation for a time dependent PDE, we would like to study the relation between its solutions and the solutions to the original problem.
If $\norm{u(t)}_\infty$ is finite and uniformly bounded in $t$, by the same argument of Theorem \ref{young} $u$ corresponds to a Young measure $\nu^u : [0,T] \times \Omega \rightarrow \prob(\R)$.
If the composition $F(\nu^u)$ is defined in the sense of equation \ref{young equivalence equation}, then $\nu^u$ satisfies the equality
\begin{eqnarray*}
	\int_{[0,T]\times\Omega} \int_{\R} \tau d\nu^u(t,x) \varphi_t + \int_{\R}F(\tau)d\nu^u(t,x) \varphi d(t,x) + \\
	+ \int_{\Omega} \int_{\R} \tau d\nu^u(0,x)\varphi(0,x) dx 
	&=& \int_{[0,T]\times\Omega} [f_\Lambda]\varphi d(t,x)
\end{eqnarray*}
for all $\varphi \in C^1([0,T],\tests(\Omega))$ with $\varphi(T,x) = 0$.
If $u$ is more regular, the above equality can be exploited to prove that $[u]$ is a weak, strong or classical solution to the original problem, in the same spirit as in Theorem \ref{teorema dirichlet}.
See also the discussion in Remark \ref{remark estensioni}.

If $\norm{u(t)}_\infty$ is not finite, the sense in which $[u]$ is a solution to problem \ref{time dependent standard} has to be addressed on a case-by-case basis.
In \cite{illposed}, we will discuss an example where $\norm{u(t)}_1$ is finite and uniformly bounded in time, and $[u]$ can be interpreted as a Radon measure solution to problem \ref{time dependent standard}.

\section{Selected applications}\label{selected applications}

We believe that the grid functions are a very general theory that provide a unifying approach to a variety of problems from different areas of functional analysis.
To support our claim, we will use the theory of grid functions to study two classic problems from functional analysis that are usually studied with very different techniques: the first problem concerns the nonlinear theory of distributions, and the second is a minimization problem from the calculus of variations.
For an in-depth discussion of a grid function formulation of a class of ill-posed PDEs, we refer to \cite{illposed}.

\subsection{The product $HH'$}\label{section hh}

The following example is discussed in the setting of Colombeau algebras in \cite{colombeau advances}, and it can also be formalized in the framework of algebras of asymptotic functions \cite{oberbuggenberg}.

Let $H$ be the Heaviside function
$$
H(x) = \left\{
\begin{array}{ll}
0 & \text{if } x \leq 0\\
1 & \text{if } x > 0
\end{array}	
\right.
$$
and let $H'$ be the derivative of the Heaviside function in the sense of distributions, i.e.\ the Dirac distribution centered at $0$.
It is well-known that the product $HH'$ is not well-defined in the sense of distributions.
However, this product arises quite naturally in the description of some physical phenomena.
For instance, in the study of shock waves discussed in \cite{colombeau advances}, it is convenient to treat $H$ and $H'$ as a smooth functions and performing calculations such as
\begin{equation}\label{silly}
\int_{\R} (H^m-H^n)H'dx
=
\left[ \frac{H^{m+1}}{m+1} \right]^{+\infty}_{-\infty} - \left[ \frac{H^{n+1}}{n+1} \right]^{+\infty}_{-\infty}
=
\frac{1}{m+1}-\frac{1}{n+1}.
\end{equation}
This calculation is not justified in the theory of distributions: on the one hand, $H^m=H^n$ for all $m, n\in\N$, so that we intuitively expect that the integral should equal $0$; on the other hand, since the products $H^mH'$ and $H^nH'$ are not defined, the integrand is not well-defined.

We will now show how in the setting of grid functions one can rigorously formulate the integral \ref{silly} and compute the product $HH'$.
Let $M \in \ns{\N}\setminus\N$ satisfy $M\varepsilon \sim 0$, and consider the grid function $h\in\test'(\Lambda)$ defined by
$$
h(x) = \left\{
\begin{array}{ll}
0 & \text{if } x \leq 0\\
x/(M\varepsilon) & \text{if } 0 < x < M\varepsilon\\
1 & \text{if } x \geq M\varepsilon
\end{array}	
\right.
$$
The function $\D h$ is given by
$$
\D h(x) = \left\{
\begin{array}{ll}
0 & \text{if } x \leq 0 \text{ and } x \geq M\varepsilon\\
1/(M\varepsilon) & \text{if } 0 < x < M\varepsilon
\end{array}	
\right.
$$
In the next Lemma, we will prove that $h$ is a representative of the Heaviside function for which the calculation \ref{silly} makes sense.

\begin{lemma}\label{lemma prodotto}
	The function $h$ has the following properties:
	\begin{enumerate}
		\item $[h^m]=H$ and $[\D h^m]=\delta_0$ for all $m \in \ns{\N}$;
		\item $h^m \not = h^n$ whenever $m \not = n$;
		\item $\langle h^m-h^n, \D h \rangle \sim \frac{1}{m+1}-\frac{1}{n+1}$.
	\end{enumerate}
\end{lemma}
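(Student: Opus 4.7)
My plan rests on a single observation: $h$ coincides with (a canonical grid representative of) the Heaviside function $H$ outside the infinitesimal ramp $(0,M\varepsilon)\cap\Lambda$, on which $|h|\leq 1$. Every quantity in the lemma is controlled by this.

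For part (1), I would first establish $[h^m]=H$ by comparing $h^m$ with the canonical grid representative $H_\Lambda\in\grid{\Lambda}$ defined by $H_\Lambda(x)=0$ for $x\leq 0$ and $H_\Lambda(x)=1$ otherwise. Since $h^m-H_\Lambda$ vanishes outside $(0,M\varepsilon)\cap\Lambda$ and is bounded by $1$ there, for any $\varphi\in\tests(\R)$
\[
\bigl|\langle h^m-H_\Lambda,\ns{\varphi}\rangle\bigr|\leq M\varepsilon\cdot\max_{\hR}|\ns{\varphi}|\sim 0,
\]
and Lemma \ref{equivalenza integrali} applied on any compact interval containing $\supp\varphi$ yields $\sh{\langle H_\Lambda,\ns{\varphi}\rangle}=\int_0^{\infty}\varphi\,dx=\langle H,\varphi\rangle$. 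Hence $[h^m]=H$ and, via Lemma \ref{character}, $h^m\in\test'(\Lambda)$. The second half of (1) is then immediate from Theorem \ref{teorema equivalenza derivate2}: $[\D h^m]=D[h^m]=DH=\delta_0$. Part (2) is a one-point observation: at $x=\varepsilon$ we have $h(\varepsilon)=1/M\in(0,1)$, so the values $(1/M)^m$ are pairwise distinct as $m\in\ns{\N}$ varies, and hence so are the grid functions $h^m$.

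For part (3), a direct calculation gives $\D h(x)=1/(M\varepsilon)$ for $x\in\{0,\varepsilon,\ldots,(M-1)\varepsilon\}$ and $\D h(x)=0$ elsewhere, so that
\[
\langle h^m-h^n,\D h\rangle
=\frac{1}{M}\sum_{i=0}^{M-1}\left(\Bigl(\frac{i}{M}\Bigr)^m-\Bigl(\frac{i}{M}\Bigr)^n\right).
\]
This is a hyperfinite Riemann sum with infinitesimal step $1/M$ for the continuous integrand $t\mapsto t^m-t^n$ on $[0,1]$, so for standard $m,n\in\N$ uniform continuity forces it to be infinitely close to $\int_0^1(t^m-t^n)\,dt=\frac{1}{m+1}-\frac{1}{n+1}$. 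I do not foresee any real obstacle: the whole proof is the formal calculation sketched in equation \ref{silly}, made rigorous by isolating the infinitesimal transition region. The only point of care is that in (3) the partition $\{i/M\}$ of $[0,1]$ is internal but is not a sub-grid of $\Lambda$, so Lemma \ref{equivalenza integrali} is invoked in spirit (via uniform continuity of $t\mapsto t^m-t^n$) rather than verbatim.
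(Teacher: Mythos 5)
Your proposal follows the paper's own proof in all essentials: the paper handles (1) by squeezing $\langle h^m,\varphi\rangle$ between $\varepsilon\sum_{x\geq M\varepsilon}\varphi(x)$ and $\varepsilon\sum_{x\geq 0}\varphi(x)$ for nonnegative grid test functions, which is the same ``the ramp has infinitesimal length'' estimate as your comparison with $H_\Lambda$; your use of Lemma \ref{character} to get $h^m\in\test'(\Lambda)$ and of Theorem \ref{teorema equivalenza derivate2} for $[\D h^m]=\delta_0$, your one-point argument for (2), and the reduction of (3) to the hyperfinite sum $\frac{1}{M}\sum_{i}\bigl((i/M)^m-(i/M)^n\bigr)$ all match the paper (your left-endpoint sum equals the paper's right-endpoint sum, since the $i=0$ and $j=M$ terms vanish or cancel).

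The one place you fall short of the statement is the scope of (3). Item (1) is asserted for all $m\in\ns{\N}$, and (2), (3) are read in the same context; the paper's proof of (3) makes no standardness assumption on $m,n$. Your justification via uniform continuity of $t\mapsto t^m-t^n$ only works for standard exponents: for infinite $m$ the integrand is not a standard function and is not S-continuous near $t=1$, so ``Riemann sum with infinitesimal mesh'' cannot be concluded that way. The repair is cheap and needs no continuity at all: since $t\mapsto t^m$ is increasing, for all $m,M\in\N$ one has $0\leq\frac{1}{M}\sum_{j=1}^{M}(j/M)^m-\frac{1}{m+1}\leq\frac{1}{M}$ (the right and left endpoint sums differ by exactly $1/M$ and bracket the integral), and by transfer this holds for all $m,M\in\ns{\N}$; with $M$ infinite this gives $\frac{1}{M}\sum_{j=1}^{M}\bigl((j/M)^m-(j/M)^n\bigr)\sim\frac{1}{m+1}-\frac{1}{n+1}$ for every $m,n\in\ns{\N}$, including infinite exponents (where both sides are infinitesimal). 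With that substitution your argument proves the lemma in full.
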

\begin{proof}
	(1).
	Let $\varphi \in \test(\Lambda)$ and, without loss of generality, suppose that $\varphi(x) \geq 0$ for all $x \in \Lambda$.
	Then for all $m \in \ns{\N}$ we have the inequalities
	$$
	\varepsilon \sum_{x \geq M\varepsilon} \varphi(x)  \leq \langle h^m, \varphi \rangle \leq \varepsilon \sum_{x \geq 0} \varphi(x),
	$$
	and, by taking the standard part of all the sides of the inequalities, we deduce
	$$
	\int_{0}^{+\infty} \sh{\varphi(x)} dx \leq \sh{\langle h^m, \varphi \rangle} \leq \int_{0}^{+\infty} \sh{\varphi(x)} dx.
	$$
	This is sufficient to conclude that $[h^m]=H$ for all $m \in \ns{\N}$.
	By Theorem \ref{teorema equivalenza derivate2}, $[\D h^m] = H' = \delta_0$.
	
	(2).
	Let $m \not = n$.
	Then,
	$$
	(h^m-h^n)(x) = \left\{
	\begin{array}{ll}
	0 & \text{if } x \leq 0 \text{ and } x \geq M\varepsilon\\
	(x/(M\varepsilon))^m-(x/(M\varepsilon))^n & \text{if } 0 < x < M\varepsilon.
	\end{array}	
	\right.
	$$
	In particular, $h^m - h^n \not = 0$, even if $[h^m] - [h^n] = 0$.
	
	(3).
	By the previous point,
	$$
	\langle h^m-h^n, \D h \rangle
	=
	\frac{1}{M} \sum_{j = 1}^{M} (j/M)^m-(j/M)^n.
	$$
	Since $M$ is infinite, 
	$$
	\frac{1}{M} \sum_{j = 1}^{M} (j/M)^m-(j/M)^n
	\sim
	\int_0^1 x^m-x^n dx
	=
	\frac{1}{m+1}-\frac{1}{n+1}.
	$$
\end{proof}

Thanks to the lemma above, we can compute the equivalence class in $\tests'(\R)$ of the product $hh'$.

\begin{corollary}\label{corollario prodotto}
	$[h \D h] = \frac{1}{2} H'$.
\end{corollary}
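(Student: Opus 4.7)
The plan is to exploit the discrete product rule from Lemma \ref{chain}, which for the grid function $h$ yields
$$
\D(h^2)(x) = h(x+\varepsilon)\D h(x) + h(x)\D h(x) = 2\,h(x)\D h(x) + \varepsilon(\D h(x))^2,
$$
using the identity $h(x+\varepsilon) = h(x) + \varepsilon\D h(x)$. Solving for $h\D h$ gives
$$
h\D h = \tfrac{1}{2}\bigl(\D(h^2) - \varepsilon(\D h)^2\bigr),
$$
so that, once we verify $h\D h \in \B$, the conclusion will follow by evaluating the equivalence class of each term on the right.

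The first term is essentially the content of Lemma \ref{lemma prodotto}: since $[h^2] = H$ by part (1), Theorem \ref{teorema equivalenza derivate2} gives $[\D(h^2)] = DH = H'$. For the second term, recall from the definition of $h$ that $\D h$ vanishes outside the $M$ points $0,\varepsilon,\ldots,(M-1)\varepsilon$, where it takes the value $1/(M\varepsilon)$. Hence, for any $\varphi \in \test(\Lambda)$, S-continuity of $\varphi$ on a set whose standard part is compact guarantees that $\max_{x \in \Lambda}|\varphi(x)|$ is finite, and
$$
\bigl|\langle \varepsilon(\D h)^2,\varphi\rangle\bigr|
= \varepsilon^2 \sum_{j=0}^{M-1}\frac{|\varphi(j\varepsilon)|}{(M\varepsilon)^2}
\leq \frac{\max_{x \in \Lambda}|\varphi(x)|}{M} \sim 0,
$$
so that $\varepsilon(\D h)^2 \in \B$ and $[\varepsilon(\D h)^2] = 0$. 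A similar computation gives $\|h\D h\|_1 = \frac{1}{M^2}\sum_{j=0}^{M-1} j = \frac{M-1}{2M}$, which is finite, so $h\D h \in \B$ by Lemma \ref{lemma sufficiente}. Putting the pieces together yields $[h\D h] = \tfrac{1}{2}H'$.

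The only subtle point is the estimate on $\varepsilon(\D h)^2$: the factor $(\D h)^2$ is of order $1/(M\varepsilon)^2$ on a hyperfinite support of measure $M\varepsilon$, so without the multiplication by $\varepsilon$ the product $(\D h)^2$ would fail to belong to $\B$ at all. It is precisely this ``defect'' term, present in the discrete chain rule of Lemma \ref{chain} but invisible at the distributional level, that reconciles the pointwise product rule for grid functions with the Schwartz obstruction and makes the formal calculation \ref{silly} rigorous in this framework.
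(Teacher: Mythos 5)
Your proof is correct, and it takes a genuinely different route from the paper's. The paper argues by direct evaluation: it writes $\langle h\,\D h,\varphi\rangle=\frac{1}{M^2}\sum_{j} j\,\varphi(j\varepsilon)$, squeezes this between $\frac{\underline m}{M}\sum_j j/M$ and $\frac{\overline m}{M}\sum_j j/M$ with $\underline m,\overline m$ the minimum and maximum of $\varphi$ over the infinitesimal interval $[0,M\varepsilon]$, uses $\frac1M\sum_j j/M\sim\int_0^1 x\,dx=\frac12$, and concludes by S-continuity of $\varphi$ that the standard part is $\frac12\,\sh{\varphi(0)}$, i.e.\ $[h\,\D h]=\frac12 H'$. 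You instead use the discrete product rule (Proposition \ref{chain}) to obtain the exact identity $h\,\D h=\frac12\bigl(\D(h^2)-\varepsilon(\D h)^2\bigr)$, dispose of $\D(h^2)$ via Lemma \ref{lemma prodotto} and Theorem \ref{teorema equivalenza derivate2}, and kill the defect term by an infinitesimal estimate. Both are sound; what yours buys is structure: it exhibits the factor $\frac12$ as an algebraic consequence of $h\,\D h$ being half of $\D(h^2)$ up to a defect, and it isolates exactly which regularity of the Heaviside representative is needed, namely $[\varepsilon(\D h)^2]=0$ --- a condition that fails for the one-step representative ($h=0$ for $x\le 0$, $h=1$ for $x>0$, for which $[h\,\D h]=0$), which speaks directly to the paper's closing remark about the as-yet-undetermined class of admissible representatives and to the Schwartz discussion of Section \ref{section schwartz}; the paper's squeeze argument is in turn more self-contained and parallels the computation of Lemma \ref{lemma prodotto}(3). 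Two cosmetic points: in the defect estimate the maximum of $|\varphi|$ should be taken over the points $0,\varepsilon,\dots,(M-1)\varepsilon$ actually appearing in the sum --- these are all infinitely close to $0$, hence nearstandard, so that maximum is finite --- rather than over all of $\Lambda$, where finiteness of a test function at remote points is not literally guaranteed by the definition of $\test(\Lambda)$; and $h\,\D h\in\B$ already follows from your decomposition, since $\B$ is an $\fin$-module closed under $\D$ (Theorem \ref{teorema equivalenza derivate2}), so the $L^1$-norm computation, though correct, is not strictly needed.
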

\begin{proof}
	For any $\varphi \in \test(\Lambda)$, we have
	$$\langle h \D h, \varphi \rangle
	=
	\frac{1}{M^2} \sum_{j = 1}^M j \psi(j\varepsilon).$$
	Let
	$
	\underline{m} = \min_{1 \leq j \leq M}\{\varphi(j\varepsilon)\}$ and $\overline{m} = \max_{1 \leq j \leq M}\{\varphi(j\varepsilon)\}$.
	We have the following inequalities:
	\begin{eqnarray*}
		\frac{\underline{m}}{M} \sum_{j = 1}^M j/M
		\leq
		\frac{1}{M^2} \sum_{j = 1}^M j \varphi(j\varepsilon)
		\leq
		\frac{\overline{m}}{M} \sum_{j = 1}^M j/M.
	\end{eqnarray*}
	Since $M$ is infinite,
	$$
	\frac{1}{M} \sum_{j = 1}^M j/M \sim \int_0^1 x dx=\frac{1}{2},
	$$
	so that
	$$
	\sh{\left(\frac{\underline{m}}{2}\right)} \leq \sh{\langle h \D h, \varphi \rangle} \leq \sh{\left(\frac{\overline{m}}{2}\right)}.
	$$
	By S-continuity of $\varphi$, $\underline{m} \sim \overline{m} \sim \varphi(0)$, so that $\sh{\langle h \D h, \varphi \rangle } = \frac{1}{2} \,\sh{\varphi(0)}$ for all $\varphi \in \test(\Lambda)$, which is equivalent to $[h \D h]=\frac{1}{2} H'$.
\end{proof}

Notice that $h$ is not the only function satisfying Lemma \ref{lemma prodotto} and Corollary \ref{corollario prodotto}.
In fact, we conjecture that Lemma \ref{lemma prodotto} and Corollary \ref{corollario prodotto} hold for a class of grid functions that satisfy some regularity conditions yet to be determined.

\subsection{A variational problem without a minimum}

We will now discuss a grid function formulation of a classic example of a variational problem without a minimum.
For an in-depth analysis of the Young measure solutions to this problem we refer to \cite{sychev}, and for a discussion of a similar problem in the setting of ultrafunctions, we refer to \cite{ultraapps}.
The grid function formulation consists in a hyperfinite discretization, as in Cutland \cite{cutland controls2}.

Consider the problem of minimizing the functional
\begin{equation}\label{variational}
J(u) = \int_0^1 \left( \int_0^x f(t)dt\right)^2  + (f(x)^2-1)^2 dx
\end{equation}
with $f \in L^2([0,1])$.
Intuitively, a minimizer for $J$ should have a small mean, but nevertheless it should assume values in the set $\{-1, +1\}$.
Let us make precise this idea: define
$$
f_0 = \chi_{[k,k+1/2)}-\chi_{[k+1/2,k+1)},\ k \in \Z
$$	
and let $f_n : [0,1] \rightarrow \R$ be defined by $f_n(x) = f_0(nx)$.
It can be verified that $\{f_n\}_{n \in \N}$ is a minimizing sequence for $J$, but $J$ has no minimum.
However, the sequence $\{f_n\}_{n \in \N}$ is uniformly bounded in $L^\infty([0,1])$, hence it admits a weak* limit in the sense of Young measures.
The limit is given by the constant Young measure 
$$
\nu_x = \frac{1}{2} (\delta_1 + \delta_{-1}).
$$
We can evaluate $J(\nu)$: for the first term of the integral \ref{variational}, we have
$$
\int_0^x \nu(t) dt = \int_0^x \left(\int_\R \tau d\nu_x\right) dt = 0,
$$
meaning that the barycentre of $\nu$ is $0$.
Since the support of $\nu$ is the set $\{-1, +1\}$, the second term of the integral becomes
$$
(\nu(x)^2-1)^2 = \int_\R (\tau^2-1)^2 d\nu_x = 0.
$$
As a consequence, $J(\nu) = 0$, and $\nu$ can be interpreted as a minimum of $J$ in the sense of Young measures.

In the setting of grid functions, the functional \ref{variational} can be represented by
$$
\J(u) = \varepsilon \sum_{n=0}^N \left[\left(\varepsilon \sum_{i = 0}^n f(i\varepsilon)\right)^2 + (f(n\varepsilon)^2-1)^2\right].
$$
Observe that this representation is coherent with the informal description of $J$, and that the only difference between $J$ and $\J$ is the replacement of the integrals with the hyperfinite sums.
Let us now minimize $\J$ in the sense of grid functions.
The minimizing sequence found in the classical case suggests us that a minimizer of $\J$ should assume values $\pm 1$, and that it should be piecewise constant in an interval of an infinitesimal length.
For $M \in \ns{\N}$, let $f_M = \ns{u_0(Mx)}$.
If $M < M' \leq N/2$, then
$$
\varepsilon \sum_{i = 0}^n f_M(i\varepsilon) > \varepsilon \sum_{i = 0}^n f_{M'}(i\varepsilon).
$$
We deduce that a minimizer for $\J$ is the grid function $f_{N/2}$, that is explicitly defined by $f_{N/2}(n\varepsilon) = (-1)^n$.

We will now show that this solution is coherent with the one obtained with the classic approach, i.e.\ that the Young measure associated to $f_{N/2}$ corresponds to $\frac{1}{2} (\delta_1 + \delta_{-1})$.
Since $\norm{f_{N/2}}_\infty = 1$, Theorem \ref{young} guarantees the existence of a Young measure $\nu$ that corresponds to $f_{N/2}$.
Moreover, by Proposition \ref{homogeneous}, $\nu$ is constant, and
$$
\int_{\R} \Psi d\nu_x = \frac{1}{2} \sum_{i = 0}^1 \Psi(f_{N/2}(i\varepsilon)) = \frac{1}{2} (\Psi(1)+\Psi(-1))
$$
for all $\Psi \in C^0_b(\R)$.
We deduce that the Young measure associated to $f_{N/2}$ is constant and equal to $\frac{1}{2} (\delta_1 + \delta_{-1})$, the minimizer of $J$ in the sense of Young measures.

\end{document}